\begin{document}

\title{H\"older estimates for non-local parabolic equations with critical drift}

\author[H. A. Chang-Lara]{H\'ector A. Chang-Lara}
\address{Department of Mathematics, Columbia University, New York, NY 10027}
\email{changlara@math.columbia.edu}

\author[G. D\'avila]{Gonzalo D\'avila}
\address{Department of Mathematics, University of British Columbia, Vancouver, BC V6T 1Z2}
\email{gdavila@math.ubc.ca}

\begin{abstract}
In this paper we extend previous results on the regularity of solutions of integro-differential parabolic equations. The kernels are non necessarily symmetric which could be interpreted as a non-local drift with the same order as the diffusion. We provide an Oscillation Lemma and a Harnack Inequality which can be used to prove higher regularity estimates.
\end{abstract}
\maketitle

\section{Introduction}\label{section:Introduction}
We are interested in studying regularity properties for time evolution problems, driven by fully nonlinear integro-differential operators $I$ of order $\s \in [1,2)$, with local and non-local drift to be defined, 
\begin{align*}
u_t - Iu = f(t). 
\end{align*}
To keep an example in mind, consider that the operator $I$ might be given by a combination of linear operators with non-symmetric kernels,
\begin{align*}
Iu(x) &= \inf_\b\sup_\a(2-\s)\int \d u(x;y)\frac{K_{\a,\b}(y)}{|y|^{n+\s}}dy + b_{\a,\b}\cdot Du(x),\\
\d u(x;y) &:= u(x+y) - u(x) - Du(x)\cdot y\chi_{B_1}(y).
\end{align*}

Non-local equations are a subject that has been study extensively in the last decade, from both the probabilistic and the analytic approach. From the probabilistic side, regularity of solutions for the stationary problem has been studied in \cite{Kassmann05}, \cite{Kassmann05-2} and \cite{Bass02}, where they prove Harnack inequalities and therefore H\"older estimates. However these results degenerate when the order of the equation goes to the classical one.

The first uniform estimates on the order equation are due to L. Caffarelli and L. Silvestre in the elliptic case. In a series of papers \cite{Caffarelli09}, \cite{Caffarelli11} and \cite{Caffarelli12}, they studied the regularity of the solutions of fully nonlinear non-local elliptic equations extensively, proving under different hypothesis $C^\a$, $C^{1,\a}$ and $C^{\s+\a}$ estimates. The approach was purely non-variational and used tools like the Alexandroff-Bakelman-Pucci (ABP) and the Point Estimate.

In the parabolic setting, the variational problem was studied by L. Caffarelli, C. Chan and A. Vasseur in \cite{Vasseur11} by using De Giorgi's technique. Also, M. Felsinger and M. Kassmann in \cite{Kassmann12}, obtained a Harnack inequality where the constants remain uniform as the order of the equation goes to the classical one by using Moser's technique. We point out that both of these papers derive the equation from a variational principle and would be the equivalent to the regularity theory of parabolic equations in divergence form.

In the fully nonlinear setting, L. Silvestre studied in \cite{Silvestre11} the regularity of solutions to a Hamilton-Jacobi equation with critical fractional diffusion where the order of the equation is one. His work uses a non-variational approach to proof a Diminish of Oscillation estimate.

In the case when there is no drift, $b_{\a,\b}=0$ and $K$ is even for the equation considered above, the authors prove in \cite{Davila12-p} that solutions of the equation are H\"older continuous in space and time by combining the techniques from \cite{Caffarelli09}, \cite{Silvestre11} and \cite{Wang92}. In the translation invariant case, this implies $C^{1,\a}$ estimates in space under smoothness hypothesis for the kernels. In \cite{Chang-Davila2} the authors extend most of these results for the non-translation invariant case for equations with regular coefficients. Recently, Jin, T. and Xiong, J. considered in \cite{Jin14} higher order, optimal Schauder estimates  for linear operators. Also recently, improvements by J. Serra in \cite{Serra14} allowed to remove the smoothness requirement for the symmetric kernels to obtain the $C^{1,\a}$ estimate in space. This work, include the analogous result for the non-symmetric kernels following the techniques from \cite{Serra14}. 

In this paper we are concerned with studying the case without symmetry assumptions on $K$. As we will discuss in the following section, the scaling suggest to include gradient terms. In the second order theory one can argue that at small scales the drift, which has order one, becomes sufficiently small such that it can be absorbed by estimates that can be proved for pure second order equations, this is in fact the approach taken by the authors of this paper in \cite{Davila12}. In present work the drift may still be comparable to the diffusion as the scales approach zero giving us a critical type of problem, similar to the one treated in \cite{Chang12} for the elliptic case. We include new estimates as the Oscillation Lemma and the Harnack inequality. They are important in order to get an analogous of the non-local Evans-Krylov theorem in the parabolic setting in a coming paper, see also \cite{Caffarelli12}, \cite{Serra14-2} and \cite{Jin14-2} for the elliptic case.

The paper is arranged as follows. In Section \ref{section:Preliminaries} we introduce some standard notation, define the operators, the notion of solution and state some basic results. In Section \ref{section:Qualitative Properties} we study some qualitative properties including the stability, a comparison principle and the existence and uniqueness of solutions.  The familiar reader may want to skip them. A non-local version of the Alexandroff-Bakelman-Pucci estimate is proven in Section \ref{section:Alexandroff-Bakelman-Pucci type of estimate}. On Section \ref{section:Point Estimate} we prove a Point Estimate. We derive in Section \ref{section:Oscillation Lemma and Harnack Inequality} an Oscillation Lemma and Harnack Inequality uniform in the order of the equation. As consequences we obtain H\"older estimates for the solutions in Section \ref{section:regularity}.


\section{Preliminaries}\label{section:Preliminaries}

The cylinder of radius $r$, height $\t$ and center $(x,t)$ in $\R^n\times\R$ is denoted by $C_{r,\t}(x,t) := B_r(x)\times(t-\t,t]$. The cube of side length $r$ and center $x$ in $\R^n$ by $Q_r(x) := (x_1-r/2,x_1+r/2)\times\ldots\times(x_n-r/2,x_n+r/2)$. The box of side $r$, height $\t$ and centered at $(x,t)$ in $\R^n\times\R$ is denoted by $K_{r,\t}(x,t) := Q_r(x)\times(t-\t,t]$. Whenever we omit the center we are assuming that they get centered at the origin in space and time.

The parabolic topology on $\R^n\times\R$ consists of the one generated by neighborhoods of the form $C_{r,\t}(x,t)$ with respect to the point $(x,t)$. We use $(x_i,t_i) \to (x,t^-)$ to denote a sequence converging to the point $(x,t)$ with respect to this topology. In particular,
\begin{align*}
u_{t^-}(x,t) := \lim_{\t\searrow 0} \frac{u(x,t) - u(x,t-\t)}{\t}.
\end{align*}

The parabolic non-local boundary, suitable for our Dirichlet problem on a domain $\W\times(t_1,t_2]$, $\Omega\subset\R^n$, is
\begin{align*}
\p_p(\W\times(t_1,t_2]) := (\W^c\times(t_1,t_2]) \cup (\R^n \times \{t_1\}).
\end{align*}

The weighted space $L^1(\w_\s)$ with respect to
\begin{align*}
\w_\s(y) := \min(1,|y|^{-(n+\s)}),
\end{align*}
consists of all measurable functions $u:\R^n \to \R$ such that
\begin{align*}
\|u\|_{L^1(\w_\s)} := \int_\R |u(y)|\w_\s(y)dy < \8.
\end{align*}


\subsection{Non-local Uniformly Elliptic Operators}

Given $\s \in (0,2)$, a measurable kernel $K:\R^n\to[0,\8)$ and a vector $b \in \R^n$, the non-local linear operator $L_{K,b}^\s$ is defined by
\begin{align}\label{eq:linear_operators}
L_{K,b}^\s u(x) &:= (2-\s)\int \d u(x;y)\frac{K(y)dy}{|y|^{n+\s}} + b\cdot Du(x),\\
\nonumber \d u(x;y) &:= u(x+y) - u(x) - Du(x)\cdot y\chi_{B_1}(y).
\end{align}
We may also use $L_K^\s$ and $b\cdot D$ for $L_{K,0}^\s$ and $L_{0,b}^\s$ respectively.

Given that $u$ is sufficiently smooth and integrable ($L^1(\w_\s)$ for the tail), it suffices that $K$ is bounded from above for the convolution integral to converge. On the other hand, we will see that the operator has enough diffusion if it is bounded away from zero.

\begin{definition}
Let $\cK_0 = \cK_0(\l,\L)$ be the family of measurable kernels satisfying,
\begin{align*}
0<\l \leq K \leq \L<\8.
\end{align*}
\end{definition}

Non-linear operators are now obtained as an arbitrary combination of linear operators which may vary from point to point in the domain $\W\times(t_1,t_2]$.

\begin{definition}
Given $\cL \ss \{L_{K,b}^\s\}_{K \in \cK_0, b \in \R^n}$, a function $I:\W\times(t_1,t_2]\times\R^{\cL} \to \R$ determines a non-local operator of order $\s$ by,
\begin{align*}
Iu(x,t) := I\1x,t,(Lu(x,t))_{L\in\cL}\2.
\end{align*}
\end{definition}


We denote vectors in $\R^\cL$ by $(l_L)_{L \in \cL}$ or just by the abbreviation $(l_L)$ whenever it is clear from the context. The linear operator $L$ takes the role of an index and for each one of them, $l_L$ is a real number. Keep in mind the analog with pure second order equations obtained from the hessian which encodes all possible second order derivatives. In our case, given $u$, there is no finite set of numbers encoding the same information for non-local operators applied to $u$. In some sense, $(Lu)_{L\in\cL}$ is a type of hessian which whenever gets evaluated at a particular point gives a set of numbers $(l_L)_{L\in\cL} \in \R^\cL$, indexed by $L$. Same as for the hessian matrix which is symmetric, there might be some redundancy in the vector $(l_L)$ whenever it corresponds to the evaluations $l_L = Lu$.

We say $I$ is (degenerate) elliptic if it is (non-decreasing) increasing in $(l_L)\in \R^\cL$. $I$ is translation invariant in space or time if the function $I$ does not depend on the variable $x$ or $t$. Translation invariant, without making reference the space or time variable, means that it is translation invariant with respect to both. Finally, $I$ is (semi)continuous if the function $I$ is (semi)continuous when $\W\times(t_1,t_2]\times\R^{\cL}$ is equipped with the $L^\8$ norm.


\subsubsection{Scaling}

An important ingredient in regularity theory is scale inva-riance. A diminish of oscillation estimate implies the regularity of the solution because they can be iterated at smaller scales.

Let $\s\in(0,2)$, and $u$ satisfying the non-homogeneous linear equation without gradient term,
\begin{align*}
u_t - L_K^\s u = f \text{ in $\W\times(t_1,t_2]$},
\end{align*}
we consider a rescaling of the form $\tilde u(x,t) := r^{-\s}u(rx,r^\s t)$ with $r\in(0,1)$. By the change of variable formula it satisfies in $r^{-1}\W\times(r^{-\s} a,r^{-\s} b]$,
\begin{align}\label{scaling}
\tilde u_t - L^\s_{K(r\cdot)} \tilde u - \1r^{\s-1}(2-\s)\int_{B_1\sm B_r} \frac{yK(y)}{|y|^{n+\s}}dy\2\cdot D\tilde u = f(r\cdot,r^\s\cdot).
\end{align}
It comes immediately to our attention the gradient term, which was not explicitly present in $L^\s_K$ and depends on the odd part of $K$. This explains why we included the gradient variable in $I$. Also the diffusion, contained now in $L^\s_{K(r\cdot)}$, competes against the drift term if $\s \in [1,2)$.

There is a distinction with the classical second order equations with gradient terms. Usually in these cases one can argue that the drift diminishes at smaller scales and therefore can be absorbed in the regularity estimates that can be proved for equations with pure diffusion. However, in the case of the present work the drift coming from the odd part of the kernel may persist as the scaling goes to zero, competing in a critical way with the diffusion.

Another example of critical problems was also considered by L. Silvestre in \cite{Silvestre11}. Let $\cK_e \ss \cK_0$ being defined as the family of all the kernels $K \in \cK_0$ which are even, $K(y) = K(-y)$. For each $L \in \cL \ss \{L_{K,b}^\s\}_{K \in \cK_e, |b|\leq \b}$, the scaling of the drift term and the non-local diffusion get decoupled but, as before, the diffusion competes against the drift at smaller scales only if $\s \geq 1$. In particular, the case $\s=1$ is considered to be critical and includes the following Hamilton-Jacobi type of equation related with the quasi-geostrophic model,
\begin{align}\label{eq:hamilton_jacobi}
u_t - \D^{1/2}u - |Du| = 0.
\end{align}
These cases are also contained in the present work.

Lets go back to the scaling of $L_{K,b}^\s$. In order to have a bounded drift at small scales we need to assume that
\begin{align*}
\sup_{r\in(0,1)}r^{\s-1}\left|b + (2-\s)\int_{B_1\sm B_r} \frac{yK(y)}{|y|^{n+\s}}dy\right| < \8.
\end{align*}

\begin{definition}\label{def:linear_family}
Given $\s \in [1,2)$, let $\cL_0^\s(\l,\L,\b)$ be the family of linear operators $L_{K,b}^\s$ such that $K \in \cK_0(\l,\L)$ and,
\begin{align*}
\sup_{r\in(0,1)}r^{\s-1}\left|b + (2-\s)\int_{B_1\sm B_r} \frac{yK(y)}{|y|^{n+\s}}dy\right| \leq \b.
\end{align*}
\end{definition}

When the parameters have been fixed we denote $\cL_0$ for $\cL_0^\s(\l,\L,\b)$ or may simply highlight the parameters which are relevant for the discussion.

For $\s>1$ the control over the integral of the odd part of the kernel follows from the fact $K$ is bounded. In this case we just need to bound the drift $b$ and consider $\b$ sufficiently large with respect to $\L$. However, for $\s=1$, if we just assume $K$ bounded, then the previous integral might diverge with a logarithmic rate.

We say that $\cL$ is scale invariant if whenever $L_{K,b}^\s \in \cL$ then also $L_{K^r,b^r}^\s \in \cL$ for $r>0$ and,
\begin{align*}
K^r(y) &:= K(ry),\\
b^r &:= \begin{cases}
\displaystyle r^{\s-1}\1b + (2-\s)\int_{B_1 \sm B_r} \frac{y K(y)dy}{|y|^{n+\s}}\2 &\text{ if $r \leq 1$},\\
\displaystyle r^{\s-1}\1b - (2-\s)\int_{B_r \sm B_1} \frac{y K(y)dy}{|y|^{n+\s}}\2 &\text{ if $r > 1$}.
\end{cases}
\end{align*}
For instance, $\cL_0$ is scale invariant.

We describe now the type of equations that are obtained from an equation for $u$ by standard transformations used frequently in the theory. Let $\cL$ be scale invariant, $I:\W\times(t_1,t_2]\times\R^\cL \to \R$ and $u$ satisfies an equation,
\begin{align*}
u_t - Iu = f \text{ in $\W\times(t_1,t_2]$}.
\end{align*}
Given,
\begin{align*}
\tilde u(x,t) &:= \1\frac{u-\varphi}{C}\2(rx+x_0,r^\s t+t_0), \text{ ($\varphi$ smooth/integrable),}\\
\tilde I(x,t,l_{L_{K,b}}) &:= \frac{r^\s}{C}I\1rx+x_0,r^\s t+t_0,\frac{C}{r^\s}l_{L_{K^r,b^r}}+(L_{K,b}\varphi)(rx+x_0,r^\s t+t_0)\2,\\
\tilde f(x,t) &:= f(rx,r^{\s}t).
\end{align*}
Then, $\tilde u$ satisfies,
\begin{align*}
\tilde u_t - I^r\tilde u = \tilde f \text{ in $\frac{\W-x_0}{r}\times\left(\frac{t_1-t_0}{r^\s}, \frac{t_2-t_0}{r^\s}\right]$}.
\end{align*} 

\begin{definition}[Uniformly Ellipticity]
For $\cL \ss \cL_0^\s(\l,\L,\b)$ scale invariant and $I:\W\times(t_1,t_2]\times\R^\cL\to\R$, we say that $I$ is uniformly elliptic if for every $(x,t) \in \W\times(t_1,t_2]$ and $\1l^{(1)}_L\2, \1l^{(2)}_L\2 \in \R^\cL$,
\begin{align}\label{eq:uniform_ellipticity_I}
\inf_{L\in\cL}\1 l^{(1)}_L-l^{(2)}_L\2 &\leq I\1x,t,l^{(1)}_L\2 - I\1x,t,l^{(2)}_L\2 \leq \sup_{L\in\cL}\1l^{(1)}_L-l^{(2)}_L\2.
\end{align}
\end{definition}


The uniform ellipticity identities imply that $I$ is Lipschitz in $\R^\cL$, uniformly in $\W\times(t_1,t_2]$ and with respect to the $L^\8$ norm,
\begin{align*}
\sup_{(x,t)\in\W\times(t_1,t_2]}\left|I\1x,t,l^{(1)}_L\2 - I\1x,t,l^{(2)}_L\2\right| &\leq \left\|\1l^{(1)}_L-l^{(2)}_L\2\right\|_\8.
\end{align*}

Given that $I$ is uniformly elliptic and $\tilde I$ is constructed as in the expression above the previous definition, then also $\tilde I$ is uniformly elliptic with respect to the same constants.


\subsubsection{Examples}

For $\s\in(0,2)$, a fractional power of the laplacian $\D^{\s/2} = -(-\D)^{\s/2}$ is defined as the linear operator with constant kernel $K_{\D^{\s/2}}(y) := C_{n,\s}$. The constant $C_{n,\s}$ is used to have the following identity on the Fourier side, $\widehat{(-\D)^\s} = |\xi|^\s$. For $\s \in [1,2)$, $C_{n,\s}/(2-\s)$ remains uniformly bounded from above and away from zero.

Linear operators with variable coefficients are those defined as in \eqref{eq:linear_operators} replacing $K(y)$ and $b$ by $K(x,t;y)$ and $b(x,t)$ respectively. They can be clearly obtained from $I:\W\times(t_1,t_2]\times\R^\cL\to\R$.

Whenever $I$ splits as $I(x,t,l_{L_{K,b}^\s}) =  V(x,t,l_{L_{K,0}^\s}) + H(x,t,L_{0,b}^\s)$, $H$ can be considered as a Hamiltonian depending on the gradient and $V$ is the viscosity term. For example, $I(l_{L_{K,b}^\s}) = l_{\D^{1/2}} + \sup_{|b| \leq 1} |l_{L_{0,b}}|$ gives the operator in the critical equation \eqref{eq:hamilton_jacobi}.

Operators obtained by inf and sup combinations of linear operators are relevant for stochastic optimal control models and also in our discussions. Lets introduce some notation.

\begin{definition}[Extremal Operators]
The extremal operators $\cM^\pm_\cL$ with respect to a family $\cL \ss \cL_0$ are defined by $\cM^-_\cL u := \inf_{L\in\cL} Lu$ and $\cM^+_\cL u := \sup_{L\in\cL} Lu$.
\end{definition}

Whenever there is no (classical) drift term, $\cL = \{L_{K,0}\}_{K \in \cK}$, we denote $\cM^\pm_\cL = \cM^\pm_\cK$. For example, the operators with respect to $\cK_0$ can be explicitly written as,
\begin{align*}
\cM^-_{\cK_0} u = (2-\sigma)\int \frac{\l\d^+u - \L\d^-u}{|y|^{n+\s}}dy,\quad \cM^+_{\cK_0} u = (2-\sigma)\int \frac{\L\d^+u - \l\d^-u}{|y|^{n+\s}}dy.
\end{align*}
where $\d u = \d^+u - \d^-u$ is the sign decomposition of $\d u$. Also,
\begin{align*}
\cM^-_{\cL_0} u \geq \cM^-_{\cK_0} u - \b|Du|,\quad \cM^+_{\cL_0} u \leq \cM^+_{\cK_0} u + \b|Du|.
\end{align*}
The equality might not hold because of the hypothesis for the non-local drift in the Definition \ref{def:linear_family}. Contrasting to the case with even kernels, $\cM_{\cK_0}^\pm$ are not scale invariant operators, however $\cM_{\cL_0}^\pm$ are.

The uniform ellipticity of $I$ with respect to $\cL$ will be frequently used in terms of sufficiently smooth/integrable functions $u,v$ in the following way,
\begin{align*}
\cM_{\cL}^-(u-v) \leq Iu - Iv \leq \cM_{\cL}^+(u-v).
\end{align*}


\subsubsection{Limit as $\s \nearrow 2$}

Given that as $\s \nearrow 2$,
\begin{align}\label{eq:lim_k}
(2-\s)\int_{B_1}\frac{y\otimes yK(y)dy}{|y|^{n+\s}} \to A_K
\end{align}
we get that $L_K^\s u \to \frac{1}{2}\trace(AD^2u)$ with a modulus of convergence depending on, the modulus of convergence of the second order difference $\d u(y) \to (1/2)\trace(D^2u y\otimes y)$, the modulus of convergence of \eqref{eq:lim_k} and $\|K\|_\8$.

The limit \eqref{eq:lim_k} holds if $K(r\cdot) \to K_0$ in $L^1(\p B_1)$ as $r\searrow0$. Then $A_K$ can be explicitly computed by,
\begin{align*}
A_K = \int_{\p B_1} \theta\otimes\theta K_0(\theta)d\theta.
\end{align*}

Let $\cK = \{K\}$ be a set of kernels such that the limit \eqref{eq:lim_k} converges in a uniform way,
\begin{align*}
\lim_{\s\nearrow2} \sup_{K \in \cK} \left|(2-\s) \int_{B_1}  \frac{y\otimes yK(y)dy}{|y|^{n+\s}} - A_K\right| = 0.
\end{align*}
A function $I\in C(\R^{\cK})$, defines an operator $I_\s$ of order $\s \in (0,2)$, by
\begin{align*}
I_\s u := I((L_K^\s u)_{K \in \cK}).
\end{align*}
As $\s\nearrow2$ we obtain that
\begin{align*}
I_\s u \to I_2 u := I\1\1\frac{1}{2}\trace(A_KD^2u)\2_{K \in \cK}\2.
\end{align*}

A useful pair of examples when building barriers are the limits of $\cM^\pm_{\cK_0}$.

\begin{proposition}\label{pro:Pucci_sigma_to_two}
Given $u \in C^2 \cap L^1(\w_{\s_0})$ for some $\s_0 \in (0,2)$,
\begin{align*}
\lim_{\s\nearrow2} \cM^-_{\cK_0}u = \int_{\p B_1} \1\1\theta^t D^2 u\theta\2^+ \l - \1\theta^t D^2 u\theta\2^- \L \2d\theta,\\
\lim_{\s\nearrow2} \cM^+_{\cK_0}u = \int_{\p B_1} \1\1\theta^t D^2 u\theta\2^+ \L - \1\theta^t D^2 u\theta\2^- \l \2d\theta.
\end{align*}
In particular, these second order operators are comparable to the classical extremal Pucci operators. For some universal $C \geq 1$ depending only on the dimension,
\begin{align*}
\inf_{A\in[\l/C,C\L]} \trace (AM) &\leq \int_{\p B_1} \1\1\theta^t M\theta\2^+ \l - \1\theta^t M\theta\2^- \L \2d\theta\\
 &\leq \inf_{A\in[\l,\L]} \trace (AM),\\
\sup_{A\in[\l,\L]} \trace (AM) &\leq \int_{\p B_1} \1\1\theta^t M\theta\2^+ \L - \1\theta^t M\theta\2^- \l \2d\theta\\
&\leq \sup_{A\in[\l/C,C\L]} \trace (AM).
\end{align*}
\end{proposition}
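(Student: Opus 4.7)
The plan is to pass to polar coordinates $y = r\theta$ with $r > 0$ and $\theta \in \partial B_1$, giving
\[
\mathcal{M}^-_{\mathcal{K}_0}u(x) = (2-\sigma)\int_{\partial B_1}\int_0^\infty\frac{\lambda\delta^+u(x;r\theta) - \Lambda\delta^-u(x;r\theta)}{r^{\sigma+1}}\,dr\,d\theta,
\]
and then split the radial integral at $r = 1$. The near piece will generate the claimed angular integrand in the limit, while the tail piece will vanish thanks to the $(2-\sigma)$ factor and the $L^1(\omega_{\sigma_0})$ hypothesis. The argument for $\mathcal{M}^+_{\mathcal{K}_0}$ will be identical.

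For the near piece, Taylor's theorem applied to $u \in C^2$ near $x$ gives $\delta u(x;r\theta) = \tfrac{r^2}{2}\theta^t D^2u(x)\theta + r^2\eta(r)$ with $\eta(r)\to 0$ uniformly in $\theta$. Combining the leading term with the identity $(2-\sigma)\int_0^1 r^{1-\sigma}\,dr = 1$ and sign-decomposing produces, up to a dimensional normalization, the claimed angular integral. To justify the $\sigma \nearrow 2$ exchange with the singular radial integrand, I would fix small $\delta > 0$ and observe that $(2-\sigma)\int_\delta^1 r^{1-\sigma}\,dr = 1 - \delta^{2-\sigma} \to 0$; hence the mass concentrates on $r < \delta$, where the Taylor error is controlled uniformly by continuity of $D^2u$. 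For the tail piece, the cutoff $\chi_{B_1}(r\theta) = 0$ for $r \geq 1$ reduces $\delta u$ to $u(x+r\theta) - u(x)$; using $|y|^{-n-\sigma} \leq |y|^{-n-\sigma_0}$ on $|y| \geq 1$ (valid once $\sigma > \sigma_0$) together with $u \in L^1(\omega_{\sigma_0})$ bounds this by $C(2-\sigma)(\|u\|_{L^1(\omega_{\sigma_0})} + |u(x)|) \to 0$.

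For the comparison with the classical Pucci operators, I would diagonalize $M = D^2 u(x)$ in an orthonormal basis with eigenvalues $\mu_1, \ldots, \mu_n$, so that $\theta^t M \theta = \sum_i \mu_i \theta_i^2$. The pointwise inequalities $(\theta^t M \theta)^\pm \leq \sum_i \mu_i^\pm \theta_i^2$ combined with $\int_{\partial B_1}\theta_i^2\,d\theta = |\partial B_1|/n$ yield the upper bound, while the lower bound is obtained eigendirection by eigendirection: on the subset $\{\theta\in\partial B_1 : |\theta_j|\leq c_n \text{ for } j \neq i\}$ the sign of $\theta^t M \theta$ agrees with that of $\mu_i$, so each eigenvalue contributes a definite fraction of its magnitude to the angular integral. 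The dimensional constants $|\partial B_1|/n$ and $c_n$ are absorbed into the enlarged range $[\lambda/C, C\Lambda]$. The main obstacle is the exchange of the $\sigma \nearrow 2$ limit with the singular radial integral at the origin, handled by the $\delta$-localization described above.
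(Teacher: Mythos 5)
The paper states this proposition without proof, so there is no argument of the authors to compare against; judged on its own merits, your treatment of the limit $\sigma\nearrow 2$ is correct and complete. The polar decomposition, the observation that the measure $(2-\sigma)r^{1-\sigma}\,dr$ concentrates on $(0,\delta)$ as $\sigma\nearrow 2$, the uniform control of the Taylor remainder there (the passage from the sign decomposition of $\delta u$ to that of $\theta^t D^2u\,\theta$ is harmless because $x\mapsto x^{\pm}$ is $1$-Lipschitz), and the tail estimate via $u\in L^1(\omega_{\sigma_0})$ are all sound. The ``dimensional normalization'' you allude to is in fact the factor $\tfrac12$ from Taylor's theorem, consistent with the paper's own normalization elsewhere (compare $L^\sigma_K u\to\tfrac12\trace(A_K D^2u)$ in the same subsection).

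The comparison with the Pucci operators, however, contains a genuine gap. Write $a=\theta^t M\theta$, $p=\sum_i\mu_i^+\theta_i^2$, $q=\sum_i\mu_i^-\theta_i^2$, so that $a=p-q$ and your pointwise inequalities read $a^+\le p$, $a^-\le q$. In the combination $\lambda a^+-\Lambda a^-$ these two bounds pull in opposite directions: $\lambda a^+\le\lambda p$ is an upper bound, but $-\Lambda a^-\ge -\Lambda q$ is a lower bound, so what they actually give pointwise is $\lambda a^+-\Lambda a^-=\lambda a-(\Lambda-\lambda)a^-\ge\lambda a-(\Lambda-\lambda)q=\lambda p-\Lambda q$; after integration (normalizing $\int_{\partial B_1}\theta_i^2\,d\theta=1$) this bounds the angular integral from \emph{below} by $\inf_{A\in[\lambda,\Lambda]}\trace(AM)$, not from above. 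Indeed the claimed upper bound fails: for $M=\mathrm{diag}(N,-1)$ in the plane the set $\{a<0\}$ has measure $O(N^{-1/2})$, so $\int a^-\,d\theta\to 0$ while $\sum_i\mu_i^-=1$, and the angular integral exceeds $\lambda N-\Lambda$ by roughly $\Lambda-\lambda$ for large $N$. The same example defeats your ``eigendirection by eigendirection'' step: a large positive eigenvalue can swamp a small negative one, so it is not true that each eigenvalue contributes a definite fraction of its magnitude to $\int a^-\,d\theta$. The inner inequalities of the proposition's second display are therefore misstated as written (the angular integral for $\mathcal{M}^-_{\mathcal{K}_0}$ lies \emph{above} $\inf_{A\in[\lambda,\Lambda]}\trace(AM)$, and symmetrically the one for $\mathcal{M}^+_{\mathcal{K}_0}$ lies below $\sup_{A\in[\lambda,\Lambda]}\trace(AM)$). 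What the paper actually uses later (Theorem \ref{thm:maximum_principle} and Lemma \ref{lem:barrier_boundary}) are only the outer bounds, namely $\inf_{A\in[\lambda/C,C\Lambda]}\trace(AM)$ below the first integral and $\sup_{A\in[\lambda/C,C\Lambda]}\trace(AM)$ above the second; those do follow from your pointwise inequalities once they are assembled in the correct combination, with $C$ absorbing the constant $\int_{\partial B_1}\theta_i^2\,d\theta$. You should prove exactly those two one-sided estimates and flag the other two as either vacuous or requiring a different (and false, as stated) argument.
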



\subsection{Viscosity Solutions}

The set of test functions we are about to define impose sufficient requirements in order to evaluate the previous non-local operator on a cylinder $C_{r,\t}(x,t)$. First there is a condition on the continuity of the tails in time.

\begin{definition}
The space $LSC((t_1,t_2] \mapsto L^1(\w_\s))$ consists of all measurable functions $u:\R^n\times(t_1,t_2] \to \R$ such that for every $t \in (t_1,t_2]$,
\begin{enumerate}
\item $\|u(\cdot,t)^-\|_{L^1(\w_\s)} < \8.$
\item $\lim_{\t\nearrow0}\|(u(\cdot,t) - u(\cdot,t-\t))^+\|_{L^1(\w_\s)} = 0$.
\end{enumerate}

Similarly, $u \in USC((t_1,t_2] \mapsto L^1(\w_\s))$ if $-u \in LSC((t_1,t_2] \mapsto L^1(\w_\s))$ and $C((t_1,t_2] \mapsto L^1(\w_\s)) = LSC((t_1,t_2] \mapsto L^1(\w_\s)) \cap USC((t_1,t_2] \mapsto L^1(\w_\s))$.
\end{definition}

\begin{definition}[Test functions]\label{def:test_function}
A lower semicontinuous test function is defined as a pair $(\varphi, C_{r,\t}(x,t))$, such that $\varphi \in C^{1,1}_xC^1_t(C_{r,\t}(x,t)) \cap LSC((t-\t,t]\mapsto L^1(\w_\s))$. Similarly,  $(\varphi, C_{r,\t}(x,t))$ is an upper semicontinuous test function if the pair $(-\varphi, C_{r,\t}(x,t))$ is a 
lower semicontinuous test function.
\end{definition}

Test functions not only have enough regularity to evaluate $I\varphi$ but also to make it semicontinuous.

\begin{property}
Given $\cL \ss \{L^\s_{K,b}\}_{K\in\cK_0,|b|\leq \b}$, $I \in LSC(C_{r,\t}(x,t)\times\R^\cL)$ and a lower semicontinuous test function $(\varphi,C_{r,\t}(x,t))$, then $I\varphi \in LSC(C_{r,\t}(x,t))$.
\end{property}

The idea to show the semicontinuity in space or time is the same. One needs to show that $\{L_{K,b}\varphi\}_{(K,b)\in\cK_0\times \{|b|\leq\b\}}$ has a uniform modulus of semicontinuity in space and time.

Whenever the cylinder in the Definition \ref{def:test_function} becomes irrelevant we will refer to the test function $(\varphi, C_{r,\t}(x,t))$ just by $\varphi$.

\begin{definition}
Given a function $u$ and a test function $\varphi$, we say that $\varphi$ touches $u$ from below at $(x,t)$ if,
\begin{enumerate}
\item $\varphi(x,t)=u(x,t)$,
\item $\varphi(y,s) \leq u(y,s)$ for $(y,s)\in \R^n\times(t-\t,t]$.
\end{enumerate}
Similarly, $\varphi$ touches $u$ from above at $(x,t)$ if $-\varphi$ touches $-u$ from below at $(x,t)$. Finally, $\varphi$ strictly touches $u$ from above or below at $(x,t)$ if the inequality becomes strict outside of $(x,t)$.
\end{definition}

\begin{definition}[Viscosity (super) solution]\label{viscosity}
Given an elliptic operator $I$ and a function $f$, a function $u \in LSC(\W\times(t_1,t_2]) \cap LSC((t_1,t_2]\mapsto L^1(\w_\s))$ is said to be a viscosity super solution to $u_t - Iu \geq f$ in $\W\times(t_1,t_2]$, if for every lower semicontinuous test function $(\varphi,C_{r,\t}(x,t))$ touching $u$ from below at $(x,t) \in \W\times(t_1,t_2]$, we have that $\varphi_{t^-}(x,t) - I\varphi(x,t) \geq f(x,t)$.
\end{definition}


The definition of $u$ being a viscosity sub solution to $u_t - Iu \leq f$ in $\W\times(t_1,t_2]$ is done similarly to the definition of super solution replacing $LSC$ by $USC$, contact from below by contact from above and reversing the last inequality. 

Finally, a viscosity solution to $u_t - Iu = f$ in $\W\times(t_1,t_2]$ is a function which is a super and a sub solution simultaneously.


The requirement for the functions to be semicontinuous in time in an integral sense can be illustrated by the following example. Consider the fractional heat equation $u_t - \D^{\s/2} u = 0$ in $\W\times(t_1,t_2]$ with initial and boundary data equal to zero. It is solved classically by $u$ being identically zero in $\R^n\times[t_1,t_2]$. By modifying the boundary data at $t=t_2$, we obtain that $u$ still solves the equation in $\W\times(t_1,t_2)$ but not necessarily at $t=t_2$. Therefore, we can not expect classical solutions in this situation.

%

\begin{property}\label{pro:viscosity_solution}
Let $I,J$ be elliptic operators and suppose $u$ satisfies in the viscosity sense,
\begin{alignat*}{2}
u_t - Iu &\geq f &&\text{ in $\W\times(t_1,t_2]$},
\end{alignat*}
Then:
\begin{enumerate}
\item Given that $v$ satisfies in the viscosity sense
\begin{alignat*}{2}
v_t - Iv &\geq g &&\text{ in $\W\times(t_1,t_2]$},
\end{alignat*}
then for $w = \min(u,v)$ and $h = f\chi_{u<v} + g\chi_{v<u} + \max(f,g)\chi_{u=v}$ we also have that, also in the viscosity sense,
\begin{align*}
w_t - Iw \geq h \text{ in $\W\times(t_1,t_2]$}.
\end{align*}
\item Given that $I \leq J$ and $g \leq f$, then $u$ also satisfies in the viscosity sense,
\begin{alignat*}{2}
u_t - Ju &\geq g &&\text{ in $\W\times(t_1,t_2]$}.
\end{alignat*}
\item Given that $I$ is uniformly elliptic and $(\varphi, C_{r,\t}(x,t))$ is a lower semicontinuous test function touching $u$ from below at some point $(x,t)$, then the following quantities are well defined for $p = D\varphi(x)$,
\begin{align*}
L_{K,b}^p u(x) &:= \lim_{\e\to0} (2-\s)\int_{B_\e^c} \d^p u(x;y)\frac{K(y)}{|y|^{n+\s}}dy + b\cdot p,\\
\d^p u(x;y) &:= u(x+y) - u(x) - p\cdot y\chi_{B_1}(y),
\end{align*}
and they satisfy
\begin{align*}
\varphi_{t^-}(x,t) - I\1x,t,L_{K,b}^p u(x,t)\2 \geq f(x,t).
\end{align*}
\end{enumerate}
\end{property}

The first two properties are immediate from the definition. The idea of the proof for the last one is to test $u$ with a family of test functions $\varphi_{u,\e}$ that incorporates the values of $u$ outside of a small cylinder $C_{\e,\e}(x,t)$ and closes the principal value of the integral as $\e\to0$. There will be two ways to control the convergence of the integrals, one coming from the equation and the other one by the contact from below by a regular function. The uniform ellipticity is used to control the errors by the Lipschitz modulus of continuity of $I(x,t,\cdot)$. See \cite{Chang12} for the complete details.

A consequence of the previous property is the fact that sufficiently regular viscosity solutions are classical solutions.

\begin{property}
Let $I$ be a uniformly elliptic operator, $f$ be a continuous function and $u \in C^{1,\s-1+\e}_xC^1_t(\W\times(t_1,t_2])\cap USC((t_1,t_2]\mapsto L^1(\w_\s))$ such that,
\begin{align*}
u_t - Iu \leq f \text{ in viscosity in $\W\times(t_1,t_2]$}.
\end{align*}
Then it also holds that classically,
\begin{align*}
u_t - Iu \leq f \text{ in $\W\times(t_1,t_2]$}.
\end{align*}
\end{property}

The idea is that the set of points where $u$ can be touched from above is dense, therefore by the previous proposition, the equation holds in a dense set in both senses. The regularity of $u$ then implies that the equation holds at every point.


\section{Qualitative Properties}\label{section:Qualitative Properties}

We treat the stability of the equations by $\G$-convergence, the Maximum Principle, the uniform ellipticity identity for viscosity solutions and the Comparison Principle. All of them lead us to the existence and uniqueness of viscosity solutions by Perron's method, provided we have barriers to force the solution to attain the boundary and initial values in a continuous way.


\subsection{Stability}

Viscosity sub and super solutions are stable in an appropriated notion of uniform convergence from one side. The convergence of the operators is defined by duality with respect to test functions.

\begin{definition}[Weak convergence of operators]
A sequence of operators $I_i:\W\times(t_1,t_2]\times\R^{\cL}\to\R$ converges weakly to an operator $I:\W\times(t_1,t_2]\times\R^{\cL}\to\R$ if for every test function $(\varphi, C_{r,\t}(x,t))$, with $C_{r,\t}(x,t) \ss \W\times(t_1,t_2]$, $I_i\varphi$ converges to $I\varphi$ locally uniformly in $C_{r,\t}(x,t)$.

\end{definition}

\begin{definition}[$\G$-convergence]
Consider $\{u_i\}_{i\in\N} \ss LSC(\W\times(t_1,t_2]) \cap LSC((t_1,t_2]\mapsto L^1(\w_\s))$. We say $u_n$ $\G$-converges to a function $u$ if:
\begin{enumerate}
\item For every sequence $(x_i,t_i) \to (x,t^-) \in \W\times(t_1,t_2]$,
\begin{align*}
\liminf_{i\to\8}u_i(x_i,t_i) \geq u(x,t),
\end{align*}
\item For every sequence $t_i \to t^- \in (t_1,t_2]$, $\|(u(\cdot,t) - u(\cdot,t_i))^+\|_{L^1(\w_\s)} \to 0$,
\item For every $(x,t) \in \W\times(t_1,t_2]$, there exists a sequence $(x_i,t_i) \to (x,t^-)$ such that $u_i(x_i,t_i)\to u(x,t)$,
\end{enumerate}
\end{definition}

If $u_i \to u$ in the $\G$-sense in $\W\times(t_1,t_2]$ and $u$ has a local minimum at some $(x,t)\in\W\times(t_1,t_2]$ then there exists a sequence $(x_i,t_i)\to(x,t^-)$ such that $u_i$ has a local minimum at $(x_i,t_i)$. We can use this last property whenever we are given a test function $\varphi$, strictly touching $u$ from below at $(x,t)$ in $C_{r,\t}(x,t)$. Then by a vertical translation we get test functions $(\varphi + d_i)$ touching $u_i$ from below at $(x_i,t_i)$ in $C_{r,\t}(x,t)$ such that $d_i\to0$ and $(x_i,t_i)\to(x,t^-)$. The following stability result uses mainly this idea.

\begin{theorem}[Stability]\label{thm:stability}
Let $I_i$, $I$ be a lower semicontinuous elliptic operator and $\{u_i\}_{i\geq 1}$, $u$ and $\{f_i\}_{i\geq 1}$, $f$ be sequences of functions such that:
\begin{enumerate}
\item $(u_i)_t - I_iu_i \geq f_i$ in the viscosity sense in $\W\times(t_1,t_2]$,
\item $I_i \to I$ weakly in $\W\times(t_1,t_2]$,
\item $u_i \to u$ in the $\G$-sense in $\W\times(t_1,t_2]$,
\item $\liminf_{i\to\8} f_i(x_i,t_i) \geq f(x,t)$ for every $(x_i,t_i)\to(x,t^-)$ in $\W\times(t_1,t_2]$,
\end{enumerate}
Then
\begin{align*}
u_t - Iu \geq f \text{ in $\W\times(t_1,t_2]$}.
\end{align*}
\end{theorem}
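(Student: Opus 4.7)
The plan is to run the standard viscosity-stability argument, adapted to this non-local parabolic framework. Fix a lower semicontinuous test function $(\varphi, C_{r,\tau}(x,t))$ touching $u$ from below at $(x,t)\in\W\times(t_1,t_2]$; the goal is to derive $\varphi_{t^-}(x,t) - I\varphi(x,t) \geq f(x,t)$. The preliminary regularity $u \in LSC(\W\times(t_1,t_2]) \cap LSC((t_1,t_2]\mapsto L^1(\w_\s))$ follows at once from clauses (1) and (2) of the definition of $\G$-convergence.

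First I would reduce to strict contact by replacing $\varphi$ with $\varphi - \eta$, where $\eta(y,s) := \e(|y-x|^2 + (t-s))$ is a non-negative bump vanishing only at $(x,t)$ in $C_{r,\tau}(x,t)$; this preserves the test-function class and yields $\varphi<u$ off the contact point. Next I would localize the $\G$-convergence. On a slightly shrunken closed sub-cylinder $\bar C_{r',\tau'}(x,t)$ (with $r'<r$, $\tau'<\tau$), the LSC function $w_i := u_i - \varphi$ attains its infimum $m_i$ at some $(x_i,t_i)$. The recovery sequence from clause (3) of $\G$-convergence shows $\limsup_i m_i \leq 0$, while clause (1) applied along any parabolic-convergent subsequence of $(x_i,t_i)$, combined with the strict-contact property of $\varphi$, forces $(x_i,t_i)\to(x,t^-)$ and $m_i \to 0$. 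Truncating the shifted test function outside the spatial cylinder so that it coincides with $u_i$ there ensures the global touching-from-below property required by the definition of viscosity supersolution.

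Next I would use the structural identity $L(\varphi + c) = L\varphi$ for any constant $c$ and any $L\in\cL$, valid because both $\d$ and $b\cdot D$ annihilate constants. Setting $d_i := m_i$, the shifted test function $\varphi + d_i$ touches $u_i$ from below at $(x_i,t_i)$ and satisfies $I_i(\varphi+d_i) = I_i\varphi$ pointwise, together with $\p_{t^-}(\varphi+d_i) = \varphi_{t^-}$. The viscosity supersolution property for $u_i$ then reads
\[
I_i\varphi(x_i,t_i) \leq \varphi_{t^-}(x_i,t_i) - f_i(x_i,t_i).
\]
Continuity of $\varphi_{t^-}$, the weak convergence of operators (which gives $I_i\varphi \to I\varphi$ locally uniformly on $C_{r,\tau}(x,t)$), the lower semicontinuity of $I\varphi$, and hypothesis (4) combine to produce
\[
I\varphi(x,t) \leq \liminf_i I_i\varphi(x_i,t_i) \leq \liminf_i[\varphi_{t^-}(x_i,t_i) - f_i(x_i,t_i)] \leq \varphi_{t^-}(x,t) - f(x,t),
\]
which is the required viscosity inequality.

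The main obstacle is the contact-point extraction in the second paragraph. The parabolic topology is one-sided, so clause (1) of $\G$-convergence only controls sequences with $t_i \leq t_*$ approaching the limit time $t_*$ from below, while the minimizers $(x_i,t_i)$ on $\bar C_{r',\tau'}(x,t)$ may a priori accumulate at points other than $(x,t)$ and oscillate in time around a would-be cluster time. One has to extract a monotone-in-time subsequence and exploit strict contact to exclude any cluster point different from $(x,t)$. The remaining limit passage is engineered precisely to be compatible with the locally uniform weak convergence of operators and the lower semicontinuity of $I\varphi$.
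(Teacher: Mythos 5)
Your overall strategy --- strict contact via a small paraboloid penalization, vertical translation of the test function to nearby contact points for $u_i$, then a limit passage using the weak convergence $I_i\varphi\to I\varphi$, the lower semicontinuity of $I\varphi$, and hypothesis (4) --- is exactly the idea the paper records in the paragraph preceding the theorem (the paper offers no further proof), and your final inequality chain is the right one. The genuine gap is in the step you yourself flag as the main obstacle, and your proposed fix does not close it. If the minimizing times $t_{i_k}$ of $u_{i_k}-\varphi$ over the fixed closed sub-cylinder converge to a cluster time $\bar t$ strictly from above, then $(x_{i_k},t_{i_k})$ does \emph{not} converge to $(\bar x,\bar t)$ in the parabolic topology, clause (1) of $\G$-convergence is silent about $\liminf_k u_{i_k}(x_{i_k},t_{i_k})$, and strict contact cannot be brought to bear; extracting a monotone-in-time subsequence only helps when the times are nondecreasing. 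This is not a vacuous worry: taking $u_i=u$ except for a downward dip of fixed size supported on the single time slice $\{s=\bar t+1/i\}$ produces a sequence that $\G$-converges to $u$ (no parabolically convergent sequence meets infinitely many such slices, so clauses (1)--(3) are insensitive to the perturbation) yet whose minimizers of $u_i-\varphi$ over the fixed sub-cylinder escape to $(\bar x,\bar t)$ from the future with $m_i\not\to 0$. Excluding this requires either a different selection of contact points (for instance minimizing over $\bar B_{r'}(x)\times[t-\tau',s]$ for a well-chosen $s$, or a diagonal argument over shrinking cylinders) or an appeal to the supersolution property of the $u_i$ themselves, which is what actually forbids such dips; your argument invokes neither.

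A second gap is specific to the non-local setting. As you note, $\varphi+d_i$ is only known to lie below $u_i$ on the sub-cylinder where the minimum was taken, so it must be truncated (say $\varphi_i=\varphi+d_i$ inside, $\varphi_i=u_i$ outside) to be an admissible test function; but then the viscosity inequality for $u_i$ reads $(\varphi_i)_{t^-}(x_i,t_i)-I_i\varphi_i(x_i,t_i)\geq f_i(x_i,t_i)$, and $I_i\varphi_i$ differs from $I_i\varphi$ by tail terms of the form $(2-\s)\int_{|y|\gtrsim r'}\bigl(u_i-\varphi-d_i\bigr)(x_i+y,t_i)\,K(y)|y|^{-n-\s}\,dy$, whose sign and size are controlled by nothing you have written. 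Your displayed chain silently replaces $I_i\varphi_i$ by $I_i\varphi$. Closing this requires combining the ellipticity of $I_i$ with a quantitative estimate on the negative part of the tails of $u_i-\varphi$ in $L^1(\w_\s)$; this is precisely the role of the integral clauses in the definitions of $\G$-convergence and of $LSC((t_1,t_2]\mapsto L^1(\w_\s))$, which your proof never uses.
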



\subsection{Comparison Principle}

The Comparison Principle for elliptic equations states that whenever $u$ and $v$ are sub and super solutions of the same equation such that $u \leq v$ in the parabolic boundary of the domain then the order gets also preserved inside the domain. This implies immediately the uniqueness of solutions.



\subsubsection{Maximum Principle}

\begin{theorem}[Maximum Principle]\label{thm:maximum_principle}
Let $I$ be a uniformly elliptic operator and $w$ a function such that,
\begin{alignat*}{2}
w_t - Iw &\leq f &&\text{ in $\W\times(t_1,t_2]$}.
\end{alignat*}
Then
\begin{align*}
\sup_{\W\times(t_1,t_2]} w \leq \sup_{\p_p(\W\times(t_1,t_2])} w + C\|(f-I0)^+\|_\8,
\end{align*}
for some universal constant $C>0$ depending on $\W$ but independent of $\s \in [1,2)$.
\end{theorem}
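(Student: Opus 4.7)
The plan is to reduce the sub-solution inequality $w_t - Iw \leq f$ to one for the extremal operator $\cM^+_\cL$ and then to close the estimate by comparison against a universal spatial barrier. The reduction is immediate from the uniform ellipticity identity \eqref{eq:uniform_ellipticity_I} applied at $v \equiv 0$, which gives in the viscosity sense $\cM^-_\cL w \leq Iw - I0 \leq \cM^+_\cL w$; combined with the hypothesis this yields $w_t - \cM^+_\cL w \leq f - I0$. It therefore suffices to prove the estimate for sub-solutions of the extremal equation.

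Next I build a spatial barrier $\psi:\R^n \to \R$ with the following properties, all uniform in $\s \in [1,2)$: $\psi \geq 0$ globally, $\|\psi\|_\8 \leq C_\W$, $\psi \in L^1(\w_\s)$, and $\cM^+_\cL \psi(x) \leq -1$ for every $x \in \W$. A concrete choice is $\psi = A\eta$, where $\eta \in C^{1,1}(\R^n)$ is a strictly concave nonnegative bump compactly supported in a ball $B_{2R}$ containing $\overline\W$ and satisfying $D^2\eta \leq -c_0\,\mathrm{Id}$ on $\overline{\W}$, while $A>0$ is chosen large enough to dominate the drift correction. At $x\in\W$ the relevant calculation is that the $(2-\s)$ prefactor cancels the singularity of $|y|^{-n-\s}$ near the origin, yielding a uniformly negative second-difference contribution bounded above by $-c_1 A$ with $c_1$ depending only on $n,\l,\L,c_0$; the drift correction is then at most $\b A \|D\eta\|_\8$, and by choosing $A$ sufficiently large the sum is $\leq -1$. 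The scale-invariant hypothesis in Definition \ref{def:linear_family} is what keeps the drift bounded uniformly across $\s \in [1,2)$.

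With the barrier in hand, set $M := \sup_{\p_p(\W\times(t_1,t_2])} w$ and $N := \|(f-I0)^+\|_\8 + \e$ for an arbitrary $\e > 0$, and define $\varphi(x,t) := M + N\psi(x)$. Since $\psi \geq 0$, $\varphi \geq M \geq w$ on $\p_p(\W\times(t_1,t_2])$. Assume, for contradiction, that $\varphi - w$ attains a strictly negative minimum at some $(x_0,t_0) \in \W \times (t_1,t_2]$; upper semicontinuity of $w$ and regularity of $\varphi$ guarantee this minimum is achieved. After a vertical shift $\varphi$ becomes a smooth test function touching $w$ from above at $(x_0,t_0)$, and the viscosity sub-solution inequality gives $\varphi_{t^-}(x_0,t_0) - I\varphi(x_0,t_0) \leq f(x_0,t_0)$. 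Since $\varphi_t\equiv 0$ and every $L\in\cL$ annihilates constants, uniform ellipticity yields $I\varphi(x_0,t_0) \leq I0(x_0,t_0) + N\cM^+_\cL\psi(x_0) \leq I0(x_0,t_0) - N$; rearranging produces $N \leq (f-I0)(x_0,t_0) \leq \|(f-I0)^+\|_\8 < N$, a contradiction. Hence $w \leq \varphi$ on $\W\times(t_1,t_2]$, and letting $\e \to 0$ yields the conclusion with $C = \|\psi\|_\8$.

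The principal technical difficulty is the $\s$-uniform construction of $\psi$. The degeneracy of the $(2-\s)$ factor as $\s \nearrow 2$ is not a genuine obstruction because Proposition \ref{pro:Pucci_sigma_to_two} shows that $\cM^+_\cL$ converges to a classical Pucci operator, for which any strictly concave $C^2$ bump serves as a barrier. For intermediate $\s$ standard non-local estimates deliver the bound with universal constants. The truly critical regime is $\s = 1$, where the non-local drift persists at every scale; the scale-invariant constraint on $\b$ in Definition \ref{def:linear_family} is what allows one to absorb this drift into the negative second-difference contribution, and it is exactly this mechanism that keeps the constant $C$ in the theorem independent of $\s$.
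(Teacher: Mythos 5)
Your reduction to $w_t-\cM^+_{\cL_0}w\le f-I0$ and the sliding/touching scheme are sound and parallel the paper's argument, but the barrier construction has a genuine gap: you have not shown, and in general it is false, that $\cM^+_{\cL_0}\psi\le-1$ at every $x\in\W$. Two problems. First, $\cM^+_{\cL_0}$ is positively homogeneous, so $\cM^+_{\cL_0}(A\eta)=A\,\cM^+_{\cL_0}\eta$; ``choosing $A$ large'' rescales the good concavity term and the bad drift term by the same factor and cannot produce a sign. What is actually needed is $\cM^+_{\cL_0}\eta<0$ on all of $\W$ for the fixed profile $\eta$, and this requires the concavity contribution to beat both $\b|D\eta(x)|$ and the positive far-field contribution, neither of which you control. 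Second, for a bump supported in $B_{2R}$ with $D^2\eta\le-c_0\,\mathrm{Id}$, at a point $x$ near $\p\W$ one has $|D\eta(x)|\sim c_0R$, while the nonlocal concavity term is only of size $\l c_0R^{2-\s}$, which stays $O(\l c_0)$ as $\s\nearrow2$; so for $R=\diam(\W)$ large the drift term $\b c_0 R$ dominates (this is already the classical obstruction to using a quadratic bump as a barrier in the presence of drift, visible through Proposition \ref{pro:Pucci_sigma_to_two}). Moreover, $\d\eta(x;y)=\eta(x+y)-\eta(x)$ for $|y|\ge1$ can be as large as $\max\eta-\eta(x)\sim c_0R^2$ on a set of $y$ of diameter $\sim R$, producing a positive contribution of order $(2-\s)\L c_0R^{2-\s}$, comparable to the negative one but with the unfavorable constant $\L$ against $\l$.

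The paper circumvents both issues by proving $\cM^+_{\cL_0}\psi\le-\d_0$ only on a small ball $B_{\d_1}$ around the \emph{maximum} of the bump $\psi=(2-|y|^2)\chi_{B_1}$: there $\d\psi(x;y)\le0$ for every $y$ (so there is no positive far-field term) and $|D\psi(x)|\le2\d_1$ is small (so the drift is harmless); it then rescales $\psi(\cdot/(R\d_1^{-1}))$ so that all of $\W$ lands inside the rescaled good region, using the scale invariance of $\cM^+_{\cL_0}$, and slides the multiplicative constant $\a$ rather than an additive one. Your argument can be repaired by the same device --- take the support of $\eta$ of radius $KR$ with $K$ large, so that on $\W$ the bump is within $O(K^{-2})$ of its maximum and $|D\eta|=O(K^{-1})$ --- but as written the key inequality $\cM^+_{\cL_0}\psi\le-1$ on $\W$ is unjustified.
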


\begin{proof}
Assume without loss of generality that $\sup_{\p_p(\W\times(t_1,t_2])} w = 0$ and $I0 = 0$. Otherwise, apply the following proof to $(w - \sup_{\p_p(\W\times(t_1,t_2]} w)$ and $f - I0$ (recall also that from our definition of viscosity sub solution $\sup_{\W\times(t_1,t_2]} u < \8$).

We will use a rescaling of $\psi(y) = (2-|y|^2)\chi_{B_1}(y)$ as a test function for $w$. The important thing to notice is that $\cM^+_{\cL_0}\psi \leq -\d_0$ in some ball $B_{\d_1}$ for some universal constants $\d_0 > 0$ and $\d_1 \in (0,1)$ independent of $\s \in [1,2)$. It can be proved by using Proposition \ref{pro:Pucci_sigma_to_two} because, for each $\s \in [1,2)$, $\cM^+_{\cL_0}\psi$ is strictly negative in a neighborhood of the origin and neither this negative quantity or the neighborhood degenerate in the limit when $\s$ goes to two.

Assume that $M := \sup_{\W\times(t_1,t_2]} u \geq 0$ (otherwise there is nothing to prove) and that $\W \ss B_R$, for $R=\diam(\W)$. Under these assumptions we have $\varphi := \a\psi(\cdot/R)$ is a test function touching $u$ from above at $(x,t) \in \W\times(t_1,t_2]$ for some $\a \in [M/2,M]$. Then we have that
\begin{align*}
\|f^+\|_\8 \geq \varphi_t(x,t) - I\varphi(x,t) &\geq - \cM^+_{\cL_0}\varphi(x) \geq (M/2)(R\d_1^{-1})^{-\s}\d_0,
\end{align*}
giving us the desired bound.
\end{proof}

%
%
%


\subsubsection{Uniform Ellipticity Identity for Viscosity Solutions}

From here the Comparison Principle would be immediate if we can use the uniform ellipticity identity for viscosity solutions. However, this is not directly implied from the definitions. We prove that this identity holds for translation invariant operators by using the classical sup-convolution which regularizes the solution meanwhile preserving the equation.

\begin{theorem}[Uniform ellipticity identity for viscosity solutions]\label{thm:equation_for_the_diference}
Let $I$ be a translation invariant, uniformly elliptic operator with respect to $\cL$, $f,-g\in USC(\W\times(t_1,t_2])$ and $u$ and $v$ such that, in the viscosity sense,
\begin{align*}
u_t - Iu &\leq f \text{ in $\W\times(t_1,t_2]$},\\
v_t - Iv &\geq g \text{ in $\W\times(t_1,t_2]$}.
\end{align*}
Then for $w = u-v$ the following holds also in viscosity,
\begin{align*}
w_t - \cM^+_{\cL}w \leq f-g \text{ in $\W\times(t_1,t_2]$}.
\end{align*}
\end{theorem}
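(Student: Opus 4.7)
The plan is to reduce the claim to an almost-everywhere pointwise argument via sup/inf-convolution of $u$ and $v$, and then pass to the limit. On a cylinder $\W'\times(t_1',t_2']$ compactly contained in $\W\times(t_1,t_2]$, set
\begin{align*}
u^\e(x,t) &:= \sup_{(y,s)}\left\{u(y,s) - \e^{-1}(|x-y|^2 + |t-s|^2)\right\},\\
v_\e(x,t) &:= \inf_{(y,s)}\left\{v(y,s) + \e^{-1}(|x-y|^2 + |t-s|^2)\right\}.
\end{align*}
Since $I$ is translation invariant in both space and time, a standard shifting argument (combined with Theorem \ref{thm:stability}) yields, for $\e$ small, the regularized inequalities $(u^\e)_t - Iu^\e \leq f^\e$ and $(v_\e)_t - Iv_\e \geq g_\e$ on $\W'\times(t_1',t_2']$, where $f^\e$ and $g_\e$ are the natural sup/inf envelopes of $f$ and $g$; by the USC hypothesis on $f,-g$, one has $f^\e\searrow f$ and $g_\e\nearrow g$. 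Crucially, $u^\e$ is semiconvex, $v_\e$ is semiconcave in $(x,t)$, both are locally Lipschitz, and $u^\e\searrow u$, $v_\e\nearrow v$ in the $\G$-sense.

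Semiconvexity and semiconcavity imply, via Alexandrov's theorem, that $u^\e$ and $v_\e$ admit pointwise $C^{1,1}_xC^1_t$ expansions at almost every point of $\W'\times(t_1',t_2']$; the intersection of the two good sets still has full measure. At any such common good point $(x_0,t_0)$, Property \ref{pro:viscosity_solution}(3) (and its symmetric version for subsolutions) applies: setting $p_u := Du^\e(x_0,t_0)$, $p_v := Dv_\e(x_0,t_0)$, the classical evaluations $L^{p_u}_{K,b}u^\e(x_0,t_0)$ and $L^{p_v}_{K,b}v_\e(x_0,t_0)$ exist for every $L_{K,b}\in\cL$, and the two inequalities hold with $I$ evaluated on these well-defined vectors. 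Subtracting and invoking the upper bound in the uniform ellipticity identity \eqref{eq:uniform_ellipticity_I} gives, pointwise at $(x_0,t_0)$,
\begin{align*}
w^\e_t - \cM^+_{\cL} w^\e \leq f^\e - g_\e, \qquad w^\e := u^\e - v_\e.
\end{align*}

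The pointwise inequality thus holds on a set of full measure in $\W'\times(t_1',t_2']$. Because $w^\e$ is Lipschitz and semiconvex (being a sum of two semiconvex functions), a Jensen-type density argument promotes it to a viscosity inequality: any test function touching $w^\e$ from above at a point $(x,t)$ can be perturbed by arbitrarily small affine terms so that the new contact points fill a positive-measure set, which must intersect the good set; sending the perturbation to zero yields the viscosity inequality for $w^\e$ at $(x,t)$. Finally, we let $\e\searrow 0$: the convergences $u^\e\searrow u$, $v_\e\nearrow v$, $f^\e\searrow f$, $g_\e\nearrow g$ and the weak-operator stability in Theorem \ref{thm:stability} return the desired statement on all of $\W\times(t_1,t_2]$. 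The main obstacle in this plan is the middle step: simultaneously controlling the pointwise regularity of $u^\e$ and $v_\e$ at a single point so that Property \ref{pro:viscosity_solution}(3) can be invoked for both with the same family $\cL$ of linear operators (but in general with distinct gradient data $p_u,p_v$), and then using the uniform ellipticity of $I$ across these different evaluations to recover a single extremal operator $\cM^+_\cL$ acting on $w^\e$.
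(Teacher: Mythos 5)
Your proposal is correct and follows essentially the same route as the paper: sup/inf-convolution regularization, Alexandrov a.e.\ twice differentiability, a Jensen-type positive-measure contact argument (the paper uses Tso's parabolic convex envelope for this) to locate a common good point where both regularized functions can be evaluated classically, uniform ellipticity at that point, and stability to pass to the limit in $\e$. The only cosmetic differences are that the paper's sup-convolution kernel is one-sided and linear in time ($P(y,s)=|y|^2-s$ over past times, matching the parabolic notion of touching) rather than symmetric, and that the paper additionally truncates and mollifies $u,v$ outside the domain to remove the global $L^\8$ hypothesis, a technical step it also leaves to the reader.
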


\begin{definition}[Sup-convolution]
Let $t_1' \in (t_1,t_2)$ and $u \in USC(\R^n\times(t_1,t_2]) \cap L^\8(\R^n\times(t_1,t_2])$. We define the upper $\e$-envelope $u^\e:\R^n\times(t_1',t_2]\to\R$ as
\begin{align*}
u^\e(x,t) = \sup_{(y,s)\in\R^n\times[t_1',t]}\1u(y,s) - \e^{-1}P(y-x,s-t)\2.
\end{align*}
For $P(y,s) = (|y|^2-s)$.

Similarly we define the lower $\e$-envelope for $v \in LSC(\R^n\times(t_1,t_2]) \cap L^\8(\R^n\times(t_1,t_2])$ by $v_\e = -(-v)^\e$.
\end{definition}

The following properties can be proved by duality arguments as in \cite{Caffarelli95}.

\begin{property}
Let $u^\e$ be the upper $\e$-envelope for $u$ and for $(x,t) \in \R^n\times(t_1',t_2]$ let $(x^*,t^*) \in \R^n\times[t_1',t]$ such that
\begin{align*}
P(x^*-x,t^*-t) = \e(u(x^*,t^*)-u^\e(x,t)).
\end{align*}
Then,
\begin{enumerate}
\item $-u^\e \nearrow -u$ in the $\G$-sense as $\e\to0$.
\item $u^\e$ is $C^{1,1}(\R^n\times(t_1',t_2])$ from below in the parabolic sense, meaning that for every $(x,t) \in \R^n\times(t_1',t_2]$ the paraboloid $u(x^*,t^*) - \e^{-1}P^*(\cdot-x^*,\cdot-t^*)$, for $P^*(y,s) = (|y|^2+s)$, touches $u^\e$ from below and towards the past at $(x,t)$, that is
\begin{align*}
u^\e(x,t) &= u(x^*,t^*) - \e^{-1}P^*(x-x^*,x-t^*),\\
u^\e(y,s) &\geq u(x^*,t^*) - \e^{-1}P^*(y-x^*,s-t^*) \text{ for $(y,s) \in \R^n\times(t_1',t]$}. 
\end{align*}
\end{enumerate}
\end{property}

\begin{remark}\label{rmk:regularity_sup_convolution}
The last property tells us that for every $t \in (t_1',t_2]$, $u^\e(\cdot,t)$ is semi-convex. By a result of Alexandroff we get that it is twice differentiable a.e. and by Property \ref{pro:viscosity_solution}, the non-local operators can be evaluated at the same points. Regarding the regularity in time, we know that for every $x \in \R^n$, the function $s \mapsto u^\e(x,s) + \e s$ is nondecreasing, which implies that $u^\e_t(x,\cdot)$ is also well defined almost everywhere. 
\end{remark}

In a similar way in which the Stability Theorem \ref{thm:stability} is treated the following lemma can be deduced.

\begin{lemma}\label{lem:equation_sup_convolution}
Let $I$ be a translation invariant, uniformly elliptic operator with respect to $\cL$, $f\in USC(\W\times(t_1,t_2])$ and suppose $u \in L^\8(\R^n \times(t_1,t_2])$ satisfies in viscosity,
\begin{align*}
u_t - Iu &\leq f \text{ in $\W\times(t_1,t_2]$}.
\end{align*}
Then, for $\W' \cc \W$, $t_1'' \in (t_1',t_2)$ and $\e$ small enough $u^\e$ also satisfies in viscosity
\begin{align*}
u^\e_t - Iu^\e \leq f + \w(\e) \text{ in $\W'\times(t_1'',t_2]$},
\end{align*}
for some $\w(\e)\to0$ as $\e\to0$.
\end{lemma}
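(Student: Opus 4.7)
The plan is to mimic the Stability Theorem \ref{thm:stability}, converting a contact event for $u^\e$ into a contact event for $u$ via a translation argument made possible by the translation invariance of $I$. Suppose $(\varphi, C_{r,\t}(x,t))$ is a lower semicontinuous test function touching $u^\e$ from above at $(x,t) \in \W'\times(t_1'',t_2]$, and let $(x^*,t^*) \in \R^n\times[t_1',t]$ realize the sup-convolution, so that $u^\e(x,t) = u(x^*,t^*) - \e^{-1}P(x^*-x,t^*-t)$. Since $u$ is bounded, $P(x^*-x,t^*-t) \leq 2\e\|u\|_\8$, which forces $|x^*-x| = O(\sqrt\e)$ and $0 \leq t-t^* = O(\e)$ uniformly in $(x,t)$. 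Choosing $\e$ small (in terms of $\mathrm{dist}(\W',\p\W)$ and $t_1''-t_1'$) then guarantees $(x^*,t^*) \in \W\times(t_1',t_2]$.

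Next I would introduce the shifted test function
\begin{align*}
\tilde\varphi(y,s) := \varphi(y-(x^*-x), s-(t^*-t)) + \e^{-1}P(x^*-x,t^*-t)
\end{align*}
and verify that $\tilde\varphi$ touches $u$ from above at $(x^*,t^*)$ in $C_{r,\t}(x^*,t^*)$. Equality at the contact point is immediate from the definition of $(x^*,t^*)$, while $\tilde\varphi \geq u$ on the cylinder follows by combining $\varphi \geq u^\e$ with the pointwise bound $u^\e(y,s) \geq u(y+(x^*-x), s+(t^*-t)) - \e^{-1}P(x^*-x,t^*-t)$ coming straight from the definition of the sup-convolution. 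The viscosity subsolution property of $u$ at $(x^*,t^*)$ then reads
\begin{align*}
\tilde\varphi_{t^-}(x^*,t^*) - I\tilde\varphi(x^*,t^*) \leq f(x^*,t^*).
\end{align*}

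To transport this inequality back to $(x,t)$, I would invoke that $I$ is translation invariant in both space and time, and that adding a constant to a function leaves every $L_{K,b}$ (and hence $I$) unchanged. These yield $I\tilde\varphi(x^*,t^*) = I\varphi(x,t)$ and, directly, $\tilde\varphi_{t^-}(x^*,t^*) = \varphi_{t^-}(x,t)$, giving
\begin{align*}
\varphi_{t^-}(x,t) - I\varphi(x,t) \leq f(x^*,t^*) \leq f(x,t) + \w(\e),
\end{align*}
where $\w(\e)$ is meant as a uniform bound for $f(x^*,t^*) - f(x,t)$ over pairs in $\overline{\W'}\times[t_1'',t_2]$ satisfying the $O(\sqrt\e)$ distance bound.

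The genuine obstacle is this last closing step. A bare USC hypothesis on $f$ does not in general produce a vanishing uniform one-sided modulus on a compact set — a single upward spike in $f$ furnishes a counterexample — so the step quietly asks for a little more regularity. The standard fixes are either to assume $f$ continuous, or to replace $f+\w(\e)$ on the right by the USC envelope $f_\e(x,t) := \sup\1 f(y,s) : |y-x|^2 + |t-s| \leq 2\e\|u\|_\8 \2$, which decreases pointwise to $f$ as $\e \to 0$ and is fully compatible with the limit passage used to deduce Theorem \ref{thm:equation_for_the_diference} from this lemma. Aside from this subtlety, everything is a direct parabolic adaptation of the classical sup-convolution machinery, leaning crucially on translation invariance to match $I\tilde\varphi(x^*,t^*)$ with $I\varphi(x,t)$.
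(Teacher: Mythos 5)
Your argument is correct and is exactly the standard sup-convolution proof the paper has in mind (the paper omits the details, saying only that the lemma ``can be deduced'' in the same way as the Stability Theorem): locate the maximizing point $(x^*,t^*)$, use boundedness of $u$ to confine it to $\W\times(t_1',t_2]$ for small $\e$, shift the test function, and use translation invariance of $I$ together with invariance under addition of constants to transport the inequality back to $(x,t)$. Your observation about $f$ is also well taken: with $f$ merely $USC$ the error $f(x^*,t^*)-f(x,t)$ need not be bounded by a modulus $\w(\e)$ depending on $\e$ alone, so the statement implicitly requires either continuity of $f$ or the replacement of $f+\w(\e)$ by the upper envelope $f_\e$; since $f_\e$ decreases pointwise to $f$ by upper semicontinuity, this reformulation is fully compatible with the subsequent passage to the limit via the Stability Theorem, so nothing downstream is affected. (One cosmetic remark: for a subsolution the touching test functions should be the upper semicontinuous ones, per the paper's conventions.)
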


The following result is a relaxed version of Theorem \ref{thm:equation_for_the_diference}. Notice that $u$ and $v$ have a semicontinuity hypothesis for their tails.

\begin{lemma}
Let $I$ be a translation invariant, uniformly elliptic operator with respect to $\cL$, $f,-g\in USC(\W\times(t_1,t_2])$, $u,-v \in USC(\R^n\times(t_1,t_2]) \cap L^\8(\R^n\times(t_1,t_2])$ functions such that, in the viscosity sense,
\begin{align*}
u_t - Iu &\leq f \text{ in $\W\times(t_1,t_2]$},\\
v_t - Iv &\geq g \text{ in $\W\times(t_1,t_2]$}.
\end{align*}
Then for $w = u-v$ the following holds also in viscosity,
\begin{align*}
w_t - \cM^+_{\cL} w \leq f-g \text{ in $\W\times(t_1,t_2]$}.
\end{align*}
\end{lemma}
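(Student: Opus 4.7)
The plan is to regularize $u$ and $v$ by sup/inf-convolutions, subtract the resulting semi-classical equations at almost every point, and pass to the $\e\to 0$ limit via Stability (Theorem \ref{thm:stability}). First, I apply Lemma \ref{lem:equation_sup_convolution} to $u$ and its dual statement (obtained by replacing $u$ with $-v$ and $I$ with $\tilde I(\cdot) := -I(-\cdot)$, which is uniformly elliptic with respect to $\cL$ with the same constants) to $v$. The global $USC(\R^n\times(t_1,t_2])\cap L^\8$ hypothesis on $u$ and $-v$ is precisely what allows the envelopes $u^\e$ and $v_\e := -(-v)^\e$ to be well-defined on all of $\R^n$. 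On a shrunken cylinder $\W'\times(t_1'',t_2]$ and for $\e$ small enough, this yields in viscosity
\begin{align*}
u^\e_t - Iu^\e &\leq f + \w(\e),\\
v_{\e,t} - Iv_\e &\geq g - \w(\e),
\end{align*}
with $\w(\e)\to 0$.

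Next, by Remark \ref{rmk:regularity_sup_convolution}, $u^\e(\cdot,t)$ is semi-convex and $v_\e(\cdot,t)$ is semi-concave in $x$, so both are twice differentiable in $x$ a.e., and $u^\e_t, v_{\e,t}$ exist a.e. At any such point $(x,t)$, the osculating paraboloid of $u^\e$ plus a small bump $\delta|\cdot -x|^2$ is a local $C^{1,1}_x C^1_t$ test function touching $u^\e$ from above, while the symmetric downward construction gives a test function touching $v_\e$ from below. Property \ref{pro:viscosity_solution}(3) then evaluates $Iu^\e$ and $Iv_\e$ classically at $(x,t)$, and letting $\delta\to 0$ using the $L^\8$-Lipschitz dependence of $I$ on its $\cL$-argument converts both viscosity inequalities into classical a.e.\ pointwise inequalities. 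Subtracting and using uniform ellipticity $Iu^\e - Iv_\e \leq \cM^+_\cL(u^\e - v_\e)$ yields, with $w^\e := u^\e - v_\e$,
\[
w^\e_t - \cM^+_\cL w^\e \leq f - g + 2\w(\e) \quad \text{a.e. in } \W'\times(t_1'',t_2].
\]

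The main obstacle is upgrading this a.e.\ classical inequality into a viscosity inequality for $w^\e$. The key structural fact is that $w^\e$ is itself semi-convex in $x$ (as the sum of the two semi-convex functions $u^\e$ and $-v_\e$), so its twice-differentiability set has full measure. Given a smooth test function $\varphi$ strictly touching $w^\e$ from above at some $(x_0,t_0)$, a Jensen-type perturbation produces $\varphi_k := \varphi + a_k + b_k\cdot(x-x_0) + c_k(t-t_0)$, with $(a_k,b_k,c_k)\to 0$, touching $w^\e$ from above at nearby twice-differentiability points $(x_k,t_k)\to (x_0,t_0)$. The a.e.\ inequality applies at each $(x_k,t_k)$, and passing $k\to\8$ using the (semi)continuity of $\cM^+_\cL\varphi$ and of $f-g$ yields the viscosity inequality for $w^\e$. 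Controlling the non-local tails under these perturbations is the technically delicate point and is precisely what distinguishes this step from the local case.

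Finally, $u^\e\searrow u$ and $v_\e\nearrow v$ pointwise, so $-w^\e\nearrow -w$ in the $\G$-sense. Viewing $-w^\e$ as a super-solution for the operator $u\mapsto -\cM^+_\cL(-u)$ with right-hand side $g - f - 2\w(\e)$, Theorem \ref{thm:stability} combined with the upper semicontinuity of $f$ and $-g$ yields the conclusion $w_t - \cM^+_\cL w \leq f-g$ in viscosity in $\W\times(t_1,t_2]$.
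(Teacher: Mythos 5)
Your overall architecture coincides with the paper's: regularize with $u^\e$ and $v_\e$ via Lemma \ref{lem:equation_sup_convolution} (and its dual form), exploit the a.e.\ twice differentiability from Remark \ref{rmk:regularity_sup_convolution}, obtain the inequality for $w^\e = u^\e - v_\e$ at suitable contact points, and conclude with Theorem \ref{thm:stability}. The one place where you diverge is exactly the step you yourself flag as unresolved: the upgrade from an a.e.\ pointwise inequality to a viscosity inequality for $w^\e$ via affine perturbations $\varphi_k = \varphi + a_k + b_k\cdot(x-x_0)+c_k(t-t_0)$, where you note that "controlling the non-local tails under these perturbations is the technically delicate point." Since this is the crux of the whole argument, leaving it as an acknowledged difficulty is a genuine gap. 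The paper closes it differently and more cleanly: given $\varphi$ strictly touching $w^\e$ from above at $(x,t)$, it sets $\psi_\k = \varphi - w^\e - \d$, which is $\geq 0$ outside a small cylinder $C_{\k r,\k\t}(x,t)$ and has a strictly negative minimum inside, and invokes Tso's parabolic convex envelope to produce a positive-measure contact set $\Sigma_\k$ of points where $\psi_\k$ admits a supporting (spatial) plane from below within the backward cylinder. Intersecting $\Sigma_\k$ with the full-measure set where $u^\e$ and $v_\e$ are $C^{1,1}_xC^1_t$ yields a point $(x_\k,t_\k)$ at which the supporting plane gives $\d^p\psi_\k(x_\k;\cdot)\geq 0$ inside the cylinder while $\psi_\k\geq 0 > \psi_\k(x_\k,t_\k)$ controls the tail outside, so that $(\psi_\k)_{t^-}(x_\k,t_\k) - \cM^+_{\cL}\psi_\k(x_\k,t_\k)\geq 0$ with no further tail estimate needed; uniform ellipticity and Lemma \ref{lem:equation_sup_convolution} then transfer the inequality to $\varphi$ at $(x_\k,t_\k)$, and one sends $\k\to 0$. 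If you want to keep your perturbation formulation, you would need to supply the analogue of this tail control, which is precisely what the convex-envelope formulation provides for free; otherwise your write-up is a correct skeleton with its load-bearing step missing.
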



\begin{proof}
We will show that for $w^\e := (u^\e - v_\e)$,
\begin{align*}
w^\e_t - \cM^+_{\cL} w^\e \leq f-g + \w(\e) \text{ in $\W\times(t_1,t_2]$},
\end{align*}
for some $\w(\e)\to0$ as $\e\to0$. The result for $w$ then follows from the Stability Theorem \ref{thm:stability}.

Let $(\varphi,C_{r,\t}(x,t))$ a test function strictly touching $w^\e$ from above at $(x,t) \in \W\times(t_1,t_2]$. We need to show that,
\begin{align*}
\varphi_{t^-}(x,t) - \cM^+_{\cL} \varphi(x,t) \leq (f-g)(x,t) + \w(\e).
\end{align*}

Fix $\k \in (0,1)$. By subtracting a small number $\d>0$ we have that $\psi_\k = (\varphi - w^\e - \d)$ still satisfies $\psi_\k \geq 0$ outside $C_{\k r,\k \t}(x,t)$ but now has a strictly negative minimum $-\d$ at $(x,t)$. Let
\begin{align*}
\Sigma_\k = &\{(y,s) \in C_{\k r,\k \t}(x,t): \exists p \in \R^n \text{ such that }\\
&\psi_\k(z,\varsigma) \geq p\cdot(z-y) + \psi_\k(y,s) \ \forall (z,\varsigma)\in C_{\k r,s-\k \t}(x,s)\}.
\end{align*}
We know by the construction of the parabolic convex envelope due to K. Tso \cite{Tso85} that $\Sigma_\k$ has positive measure (depending on $\e$).

Recall now Remark \ref{rmk:regularity_sup_convolution} which says that $u^\e, v_\e \in C^{1,1}_xC^1_t(y,s)$ a.e. $(y,s) \in C_{\k r,\k \t}(x,t)$. We obtain in this way $(x_\k,t_\k) \in \Sigma_\k$ such that $u^\e, v_\e$ both are in $C^{1,1}_xC^1_t(x_\k,t_\k)$. Given that $\psi_\k$ has a supporting plane from below at $(x_\k,t_\k)$ we get that,
\begin{align*}
(\psi_\k)_{t^-}(x_\k,t_\k) - \cM^+_{\cL} \psi_\k(x_\k,t_\k) \geq 0.
\end{align*}
By uniform ellipticity and Lemma \ref{lem:equation_sup_convolution},
\begin{align*}
\varphi_t(x_\k,t_\k) - \cM^+_{\cL} \varphi(x_\k,t_\k) &\leq w^\e_t(x_\k,t_\k) - \cM^+_{\cL} w^\e(x_\k,t_\k)\\
&\leq \1u^\e_t(x_\k,t_\k) - \cM^+_{\cL} u^\e(x_\k,t_\k)\2\\
&{} - \1(v_\e)_t(x_\k,t_\k) - \cM^+_{\cL} v_\e(x_\k,t_\k)\2\\
&\leq (f-g)(x_\k,t_\k) + \w(\e).
\end{align*}
We conclude by sending $\k \to0$.
\end{proof}

The proof of Theorem \ref{thm:equation_for_the_diference} can be now recovered by mollifying and truncating $u$ and $v$ outside of $\W\times(t_1,t_2]$. We omit its proof as it is mainly technical. As a consequence we finally obtain the following Comparison Principle.

\begin{theorem}\label{thm:comparison_principle}
Let $I$ be a translation invariant, uniformly elliptic operator, $f,-g\in USC(\W\times(t_1,t_2])$ and $u$ and $v$ functions such that in viscosity,
\begin{align*}
u_t - Iu &\leq f \text{ in $\W\times(t_1,t_2]$},\\
v_t - Iv &\geq g \text{ in $\W\times(t_1,t_2]$}.
\end{align*}
Then for $w = u-v$,
\begin{align*}
\sup_{\W\times(t_1,t_2]} w \leq \sup_{\p_p(\W\times(t_1,t_2])} w + C\|(f-g)^+\|_\8,
\end{align*}
for some universal constant $C>0$ depending on $\W$ but independent of $\s \in [1,2)$.
\end{theorem}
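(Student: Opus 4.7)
The plan is to present the Comparison Principle as an essentially immediate combination of the two main tools just established: the uniform ellipticity identity for viscosity solutions (Theorem \ref{thm:equation_for_the_diference}) and the Maximum Principle (Theorem \ref{thm:maximum_principle}). All the heavy lifting — sup-convolution regularization, Alexandroff-type differentiability, and construction of the parabolic convex envelope — has been absorbed into Theorem \ref{thm:equation_for_the_diference}, so what remains is a short chain of applications.

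First I would apply Theorem \ref{thm:equation_for_the_diference} to the pair $(u,v)$. Since $I$ is translation invariant and uniformly elliptic with respect to $\cL$, and since $f, -g \in USC(\W\times(t_1,t_2])$, the hypotheses are met and we conclude that $w := u - v$ satisfies, in the viscosity sense,
\[
w_t - \cM^+_{\cL} w \leq f - g \quad \text{in } \W\times(t_1,t_2].
\]
Next I would observe that the extremal operator $\cM^+_{\cL}$ is itself translation invariant (it has no $(x,t)$ dependence) and uniformly elliptic with respect to the same family $\cL$ (this is exactly the content of the inequalities $\cM^-_\cL(u_1-u_2) \leq \cM^+_\cL u_1 - \cM^+_\cL u_2 \leq \cM^+_\cL(u_1-u_2)$, which hold by definition of the sup). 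Moreover, $\cM^+_\cL 0 = 0$.

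Therefore Theorem \ref{thm:maximum_principle} applies with $I$ replaced by $\cM^+_{\cL}$ and with right-hand side $f - g$, yielding
\[
\sup_{\W\times(t_1,t_2]} w \leq \sup_{\p_p(\W\times(t_1,t_2])} w + C \|(f - g)^+\|_\infty,
\]
where $C$ depends only on $\diam(\W)$, in particular not on $\s \in [1,2)$. This is exactly the claimed inequality.

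The only point that required minor care was checking that $w$ inherits the semicontinuity and $L^1(\w_\s)$-tail hypotheses needed so that the Maximum Principle is applicable as a statement about a viscosity sub-solution; this is automatic from $u \in USC$, $-v \in USC$ together with the corresponding tail conditions built into the definition of viscosity sub- and super-solutions. The genuinely hard step — passing the uniform ellipticity identity from the classical to the viscosity setting — is not carried out here but was the main obstacle of the previous theorem, which is why the present argument reduces to two short invocations and no additional estimates.
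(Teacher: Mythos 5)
Your proposal is correct and follows exactly the route the paper takes: the paper presents the Comparison Principle as an immediate consequence of the uniform ellipticity identity (Theorem \ref{thm:equation_for_the_diference}) followed by the Maximum Principle (Theorem \ref{thm:maximum_principle}) applied to $w=u-v$ with operator $\cM^+_{\cL}$. Your additional remarks on the ellipticity of $\cM^+_{\cL}$ and the inherited semicontinuity of $w$ are accurate and consistent with the paper's framework.
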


\begin{remark}
The translation invariance of $I$ is used in a crucial way in the proofs above, however, the same proof can be carried out if we can control the error $I(x+x_0,t+t_0,u(\cdot-x_0,\cdot-t_0)) - I(x,t,u)$ uniformly with respect to $u$. For instance, given that $I$ is translation invariant and $\varphi$ is smooth/integrable, we can consider,
\begin{align*}
\tilde I(x,t,u) := I(x,t,u+\varphi).
\end{align*}
which is not necessarily translation invariant but satisfies,
\begin{align*}
&|\tilde I(x+x_0,t+t_0,u(\cdot-x_0,\cdot-t_0)) - \tilde I(x,t,u)|\\
\leq &\sup_{L\in\cL} |L\varphi(x-x_0,t-t_0)-L\varphi(x,t)|.
\end{align*}
Then Lemma \ref{lem:equation_sup_convolution} is still valid with $\w$ depending on $\varphi$ which implies the comparison principle for $\tilde I$.
\end{remark}


\subsection{Existence and Uniqueness of Viscosity Solutions}\label{sec:existence}

We proced now to construct some barriers. Together with the Perron's method they imply the existence of viscosity solutions taking the prescribed boundary values in a continuous way.


\subsubsection{Barriers}

\begin{lemma}\label{lem:barrier_boundary}
For $\s \in [1,2)$, there exists a non negative function $\psi:\R^n\times(-\8,0]\to[0,1]$ such that for some universal $\k,r_0 > 0$ independent of $\s$, and
\begin{align*}
A = \{(y,s) \in B_{1+r_0}\times(-2\k^{-1},0]: 1 < |y| \leq 1 + (\k/2)r_0(2\k^{-1}-s)\}.
\end{align*}
\begin{alignat*}{2}
\psi_t-\cM^+_{\cL_0}\psi &> \k/2 &&\text{ in $A$},\\
\psi &= 0 &&\text{ in $B_1\times\{0\}$},\\
\psi &= 1 &&\text{ in $(\R^n\times(-\8,0])\sm A$},
\end{alignat*}
\end{lemma}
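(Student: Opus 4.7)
The plan is to build $\psi$ as a truncated combination of a concave radial spatial barrier and a linear-in-$s$ perturbation, and then verify the strict super-solution inequality by Pucci-type estimates on the spatial barrier combined with the slope in time. Fix $r_0\in(0,1)$ small (to be chosen universally at the end), let $g:[0,\8)\to[0,1]$ be smooth (after a negligible mollification near the endpoints) with $g(0)=0$, $g(t)=2(t/r_0)-(t/r_0)^2$ on $(0,r_0]$, and $g\equiv 1$ on $[r_0,\8)$; set $\phi(y):=g((|y|-1)_+)$. Then, with a small parameter $\k>0$ also to be chosen, define
\[
\psi(y,s):=\min\1 1,\ \phi(y)-\k s\2 \quad\text{for $(y,s)\in A$},
\]
and declare $\psi\equiv 1$ on $(\R^n\times(-\8,0])\sm A$ and $\psi\equiv 0$ on $B_1\times\{0\}$. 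At the parabolic outer faces of $A$ the formula already returns $1$ (at $|y|=1+r_0$ the radial profile $\phi$ equals $1$, and at $s=-2\k^{-1}$ one has $-\k s=2$ which saturates the $\min$), so the formula smoothly meets the prescribed outside values there; only the inner vertical face $\{|y|=1\}\times(-1/\k,0)$ is a genuine discontinuity, which is harmless in the LSC viscosity framework.

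The heart of the matter is the uniform-in-$\s\in[1,2)$ estimate
\[
\cM^+_{\cL_0}\phi(y)\leq -\k_0/r_0^2 \quad\text{for } 1<|y|<1+r_0/2,
\]
with some universal $\k_0>0$. The Hessian of $\phi$ at $|y|=1+\rho$, $\rho\leq r_0/2$, has radial eigenvalue $g''(\rho)=-2/r_0^2$ and $(n-1)$ tangential eigenvalues bounded by $g'(\rho)/(1+\rho)\leq 2/r_0$, so the classical Pucci limit delivered by Proposition \ref{pro:Pucci_sigma_to_two} is at most $C(n)(\L/r_0-\l/r_0^2)\leq -\l/(2r_0^2)$ once $r_0$ is universally small. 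The uniform convergence in that Proposition transfers the estimate to a one-sided neighborhood of $\s=2$. For $\s\in[1,2)$ bounded away from $2$, split $\cM^+_{\cK_0}\phi$ into the near part $|h|\leq r_0$ (controlled by the $C^{1,1}$ size $|D^2\phi|\sim 1/r_0^2$ and giving a negative contribution of order $1/r_0^2$ uniformly in $\s$) and the far part $|h|>r_0$ (bounded by a universal constant using $\|\phi\|_\8\leq 1$ together with the constraint in Definition \ref{def:linear_family} that controls the odd part of $K$). Finally $\b|D\phi|\leq 2\b/r_0$ is absorbed by the radial concavity once $r_0\leq\l/(4\b)$.

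To verify $\psi_t-\cM^+_{\cL_0}\psi>\k/2$ in $A$, first consider non-truncated points ($\phi(y)-\k s<1$). There $\psi$ is smooth in a spatial neighborhood with $\psi_t=-\k$, and $\cM^+_{\cL_0}\psi$ differs from $\cM^+_{\cL_0}\phi$ by at most a universal $C_1$ coming from replacing $\phi(y+h)$ by $1$ on a set of bounded measure separated from $h=0$. Choosing $r_0$ small so that $\k_0/r_0^2\geq 2C_1$ gives $\cM^+_{\cL_0}\psi\leq -\k_0/(2r_0^2)$, and then $\k:=\k_0/(4r_0^2)$ yields
\[
\psi_t-\cM^+_{\cL_0}\psi\geq -\k+\k_0/(2r_0^2)=\k>\k/2.
\]
At truncated points in $A$ (where $\psi\equiv 1$ locally), any test function $\varphi$ touching $\psi$ from below has $\varphi(y,s)=1$, $D\varphi=0$, $D^2\varphi\leq 0$, $\varphi_{t^-}\geq 0$, and $\d\varphi\leq 0$, so $\cM^+_{\cL_0}\varphi\leq 0$; the required strict gap is picked up from the bounded-away-from-zero measure of non-truncated points that lie near $\p B_1$ in $A$, which the linear-in-$s$ trumpet geometry encoded in the definition of $A$ guarantees.

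The main obstacle is the uniform-in-$\s$ negativity of $\cM^+_{\cL_0}\phi$. Near $\s=2$, Proposition \ref{pro:Pucci_sigma_to_two} reduces it to a classical Pucci estimate on a radially concave function, where the $1/r_0^2$ radial concavity dominates the $1/r_0$ tangential eigenvalues once $r_0$ is small. For $\s\in[1,2)$ away from $2$, however, the factor $(2-\s)$ no longer suppresses the far part of the integral relative to the near part, and the critical-order drift parametrized by $\b$ competes with the diffusion at scale $1/r_0$; the saving is that the radial concavity contributes at the stronger order $1/r_0^2$, so a single choice of $r_0$ small depending only on $n$, $\l$, $\L$, and $\b$ closes the estimate, and an accompanying choice of $\k$ delivers the strict super-solution inequality on all of $A$.
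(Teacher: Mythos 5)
There is a genuine gap, and it sits exactly at the claim your whole construction rests on: the estimate $\cM^+_{\cL_0}\phi\leq -\kappa_0/r_0^2$ on the full annulus $1<|y|<1+r_0/2$ is false for the quadratic profile. Your $\phi$ has a \emph{convex} kink on $\partial B_1$: the radial slope jumps from $0$ inside to $g'(0^+)=2/r_0$ outside (and mollifying only replaces the kink by a large positive second derivative $\sim 1/(r_0\eta)$ in an $\eta$-neighborhood, which makes things worse). Take the admissible operator $L_{\Lambda,0}\in\cL_0$ with constant kernel. At a point $x$ with $|x|=1+\rho$ and $\rho\ll r_0$, every increment $h$ in a fixed inward cone with $C\rho<|h|<r_0$ lands in $B_1$ and gives
$\delta\phi(x;h)=-g(\rho)-D\phi(x)\cdot h\geq \tfrac{1}{r_0}\bigl(\tfrac{|h|}{2}-2\rho\bigr)>0$,
so the positive part of the integral is at least of order
\begin{align*}
(2-\sigma)\,\frac{\Lambda}{r_0}\int_{C\rho}^{r_0} s^{-\sigma}\,ds\ \sim\ \frac{\Lambda}{r_0}\cdot\frac{2-\sigma}{\sigma-1}\,\rho^{1-\sigma}\ \ (\sigma>1),\qquad \frac{\Lambda}{r_0}\,\log\frac{r_0}{\rho}\ \ (\sigma=1),
\end{align*}
which blows up as $\rho\to0$, while all negative contributions (the radial concavity $-2/r_0^2$ acting on $|h|\lesssim\rho$, the outward rays, and the tail) are bounded by $Cr_0^{-\sigma}$. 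Hence for each fixed $\sigma\in[1,2)$ one has $\cM^+_{\cL_0}\phi(x)\geq L_{\Lambda,0}\phi(x)\to+\infty$ as $|x|\to1^+$, and since $A$ contains points arbitrarily close to $\partial B_1$, the supersolution inequality fails there. The appeal to Proposition \ref{pro:Pucci_sigma_to_two} does not repair this: that proposition concerns $C^2$ functions, and the convergence of $\cM^+_{\cK_0}\phi$ to the local Pucci operator as $\sigma\nearrow2$ is not uniform in $x$ up to the kink, whereas the lemma needs the estimate at every point of the annulus for each fixed $\sigma$.

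This is precisely why the paper uses $\varphi(y)=((|y|-1)^+)^{\alpha}$ with $\alpha\in(0,1)$: the profile is $\alpha$-homogeneous under dilations centered on $\partial B_1$, so the (unavoidable) positive contribution of the kink and the negative contribution of the degenerating concavity both scale like $\rho^{\alpha-\sigma}$, and a scaling argument exploiting the scale invariance of $\cL_0$ (together with the comparison $\tilde\varphi\leq\varphi$) reduces the sign of $\cM^+_{\cL_0}\varphi$ on the whole annulus to its sign at the single point $x_0=(1+r_0)e_1$, a fixed distance from the kink. Only there is the $\sigma\nearrow2$ limit invoked, and the regime $\sigma$ away from $2$ is handled by letting $\alpha\searrow0$ so that $\cM^+_{\cL_0}\chi_{B_1^c}(x_0)\to-\infty$ as $r_0\searrow0$. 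To salvage your argument you would have to replace the quadratic profile by one whose radial concavity blows up at the same rate at which the kink is approached --- which essentially forces the homogeneous profile and the accompanying scaling reduction. The remaining ingredients of your construction (the linear-in-$s$ term, the truncation at level $1$, the viscosity discussion at truncated points) are secondary and would then go through, although the strict gap $\kappa/2$ at points where $\psi\equiv1$ locally also deserves a quantitative argument rather than the measure-theoretic remark you give.
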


\begin{proof}
Let
\begin{align*}
\varphi(y) = ((|y|-1)^+)^\a.
\end{align*}
We will show first that, for $\a,r_0 \in (0,1)$ sufficiently small, $\cM^+_{\cL_0}\varphi < -\k$ in $\bar B_{1+r_0}\sm B_1$ for some universal $\k > 0$.

By radial symmetry it is enough to show the identity for $x = (1+r)e_1$ with $r\in(0,r_0])$. Let $x_0 = (1+r_0)e_1$, by scaling the graph of $\varphi$, centered at $e_1$ and sending $(x,\varphi(x))$ to $(x_0,\varphi(x_0))$ we also see that we can reduce the computation to $x_0$. To be more specific let $\r = r/r_0 \in (0,1]$ and,
\begin{align*}
\tilde \varphi(y) = \r^{-\a}\varphi\1\r(y-e_1) + e_1\2,
\end{align*}
which satisfies $\tilde\varphi \leq \varphi$ and $\tilde\varphi((1+|y|)e_1) = \varphi((1+|y|)e_1)$.

Given $(K,b) \in \cL_0$, define
\begin{align*}
\tilde K(y) &= K\1\r y\2 \in \cK_0',\\
\tilde b &= \r^{\s-1}\1b + (2-\s)\int_{B_1\sm B_\r} \frac{yK(y)}{|y|^{n+\s}}dy\2,
\end{align*}
then,
\begin{align*}
L_{K,b} \varphi(x) = \r^{\a-\s}L_{\tilde K, \tilde b}\tilde\varphi(x_0) \leq \r^{\a-\s}L_{\tilde K, \tilde b}\varphi(x_0) \leq \r^{\a-\s}\cM^+_{\cL_0}\varphi(x_0).
\end{align*}
By taking the supremum on the left hand side over $(K,b) \in \cL_0$ we conclude that $\cM^+_{\cL_0}\varphi(x_0) < -\k$ implies $\cM^+_{\cL_0}\varphi(x) < -\k$.

Letting $\s\nearrow2$ and $\a\in(0,1/2)$, we recover
\begin{align*}
\cM^+_{\cL_0}\varphi(x_0) &\to \int_{\p B_1} \1\1\theta^t D^2 \varphi(x_0)\theta\2^+ \l - \1\theta^t D^2 \varphi(x_0)\theta\2^- \L \2d\theta + \b|D\varphi(x_0)|,\\
&\leq \sup_{A \in [\l/C,C\L]} \trace(AD^2\varphi(x_0)) + \b|D\varphi(x_0)|,\\
&= \a r_0^{\a-2}\1\frac{\l(\a-1)}{C} + \1C\L\frac{(n-1)}{r_0+1} + \b\2r_0\2,\\
&< \a r_0^{\a-2}\1-\frac{\l}{2C} + \1C\L(n-1) + \b\2r_0\2.
\end{align*}
The previous quantity can be made smaller than
\begin{align*}
-\k = \frac{\a r_0^{\a-2}\l}{4C} < 0,
\end{align*}
for $r_0$ sufficiently small, independently of how small is $\a$. By Proposition \ref{pro:Pucci_sigma_to_two}, we get that $\cM^+_{\cL_0}\varphi(x_0) < -\k$ for $\s \in [\s_0,2)$ close to two.

As $\a\searrow0$, $\varphi\to\chi_{B_1^c}$ for which, $\cM^+_{\cL_0}\chi_{B_1^c}(x_0)\to-\8$ as $r_0 \searrow 0$ uniformly for $\s\in[1,\s_0)$ away from two. For $r_0$ sufficiently small we have that $\cM^+_{\cL_0}\chi_{B_1^c}(x_0) < -\k$. Finally we fix $\a\in(0,1/2)$ sufficiently small such that also $\cM^+_{\cL_0}\varphi(x_0) < -\k$ holds for $\s\in[1,\s_0)$.

Now that we have proven that $\cM_{\cL_0}\varphi < -\k$ we define
\begin{align*}
\psi(y,s) = \max\1\varphi(y) - \frac{\k}{2} s,1\2,
\end{align*}
which is the desired barrier.
\end{proof}

By combining the previous lemma with the Comparison Principle we obtain the following corollary.

\begin{corollary}\label{cor:boundary_data}
Let $\e,\d_x,\d_t \in (0,1)$, $C_0,C_{1,1}\geq 0$, $a>0$ and $u$ such that,
\begin{enumerate}
\item $\W \ss B_R \sm B_{\d_x/2}^c((\d_x/2)e_1)$,
\item $u_t - \cM^+_{\cL_0}u \leq C_0$ in $\W \times (-a,0]$,
\item $u(0,0) = 0$
\item $u \leq \e$ in $C_{\d_x,\d_t} \cap \p_p(\W \times (-\d_t,0])$,
\item $u \leq C_{1,1}$ in $\p_p(\W \times (-\d_t,0])$.
\end{enumerate}
Then, for $\k,r_0$ and $\psi$ as in Lemma \ref{lem:barrier_boundary} and $\theta = \min\1\frac{\d_x}{2+r_0}, (\k\d_t)^{1/\s}\2$, we have
\begin{align*}
u(y,s) \leq \e + \1\frac{C_0\k}{2}+C_{1,1}\2\theta^{\s}\psi\1\frac{y-\theta e_1}{\theta},\frac{s}{\theta^\s}\2 \text{ for $s \in (-a,0]$}.
\end{align*}
\end{corollary}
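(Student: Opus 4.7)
The plan is to construct an explicit supersolution $\Psi$ for $u-\e$ by translating and rescaling the barrier $\psi$ from Lemma \ref{lem:barrier_boundary}, and then conclude by the Comparison Principle (Theorem \ref{thm:comparison_principle}). I would take
\begin{equation*}
\Psi(y,s) := \1\frac{C_{0}\k}{2}+C_{1,1}\2\theta^{\s}\,\psi\1\frac{y-\theta e_{1}}{\theta},\frac{s}{\theta^{\s}}\2,
\end{equation*}
which places the null region of $\psi$ (where $\psi=0$) at $B_{\theta}(\theta e_{1})$; this ball, since $\theta\leq \d_{x}/(2+r_{0})\leq \d_{x}/2$, lies inside the exterior ball $B_{\d_{x}/2}((\d_{x}/2)e_{1})$ of $\W$ at the origin, hence outside $\W$.

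First, by the scale invariance of $\cL_{0}$, the scaling formalism from Section \ref{section:Preliminaries} gives
\begin{equation*}
\Psi_{t}-\cM^{+}_{\cL_{0}}\Psi = \1\frac{C_{0}\k}{2}+C_{1,1}\2\bigl(\psi_{t}-\cM^{+}_{\cL_{0}}\psi\bigr)\1\frac{y-\theta e_{1}}{\theta},\frac{s}{\theta^{\s}}\2.
\end{equation*}
The bound $\psi_{t}-\cM^{+}_{\cL_{0}}\psi>\k/2$ on $A$ transfers (after absorbing the prefactor) into $\Psi_{t}-\cM^{+}_{\cL_{0}}\Psi\geq C_{0}$ on the scaled transition region $A_{\theta}$, while outside $A_{\theta}$ but inside $\W$ the function $\Psi$ sits at its constant maximum and the Pucci operator there is nonpositive, so the supersolution inequality is preserved for free. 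The equation for $\Psi$ never needs to be verified on the null ball, which is outside $\W$ by the exterior-ball hypothesis.

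Second, the specific form $\theta = \min(\d_{x}/(2+r_{0}),\,(\k\d_{t})^{1/\s})$ is precisely tuned to embed the scaled transition region $A_{\theta}$ into the parabolic cylinder $C_{\d_{x},\d_{t}}$ where the boundary data of $u$ is controlled by $\e$: the spatial bound gives $\theta(2+r_{0})\leq \d_{x}$, so the support $\{|y-\theta e_{1}|\leq \theta(1+r_{0})\}$ lies in $B_{\d_{x}}$, and the temporal bound forces $s/\theta^{\s}\leq -2\k^{-1}$ whenever $s\leq -\d_{t}$. Consequently, on $\p_{p}(\W\times(-a,0])\sm C_{\d_{x},\d_{t}}$ we have $\psi\equiv 1$ and $\Psi\geq C_{1,1}\geq u$, while on $\p_{p}(\W\times(-a,0])\cap C_{\d_{x},\d_{t}}$ we have $\Psi\geq 0$ and $u\leq \e$; in both cases $u-\e\leq \Psi$ on the parabolic boundary.

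Theorem \ref{thm:comparison_principle}, applied to the translation-invariant, uniformly elliptic operator $\cM^{+}_{\cL_{0}}$, now yields $u\leq \e+\Psi$ throughout $\W\times(-a,0]$, which is the stated estimate. The main technical subtlety is reconciling two competing constraints on $\theta$: the scaled null ball must sit inside the exterior ball of $\W$ (fixing the spatial scale at $\d_{x}/(2+r_{0})$), while the scaled transition region must fit inside the small cylinder $C_{\d_{x},\d_{t}}$ where the data is small (fixing the temporal scale at $(\k\d_{t})^{1/\s}$); the stated minimum is the largest scale at which both hold with a single factor of $\theta$.
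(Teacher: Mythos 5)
Your overall strategy --- rescale and translate the barrier of Lemma \ref{lem:barrier_boundary} so that its null ball sits in the exterior ball of $\W$ at the origin, then invoke the Comparison Principle --- is exactly what the paper intends (it offers no further detail). But your verification of the boundary ordering contains a step that is simply false: on the part of $\p_p(\W\times(-\d_t,0])$ where $\psi\equiv 1$ you assert $\Psi\geq C_{1,1}\geq u$, whereas the maximum of your comparison function is
\begin{align*}
\sup\Psi=\left(\tfrac{C_0\k}{2}+C_{1,1}\right)\theta^{\s},
\end{align*}
which is strictly smaller than $C_{1,1}$ whenever $\theta<1$ and $C_{1,1}>0$. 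So the hypothesis $u\leq C_{1,1}$ on the far boundary is \emph{not} dominated by $\e+\Psi$, and the comparison principle cannot be applied on $\W\times(-a,0]$ as you propose. A second, related gap: where $\Psi$ sits at its constant maximum you only get $\Psi_t-\cM^+_{\cL_0}\Psi\geq 0$, while the comparison against $u_t-\cM^+_{\cL_0}u\leq C_0$ requires the supersolution inequality with right-hand side $C_0$ there; "preserved for free" gives the wrong constant when $C_0>0$. (There is also a harmless factor of two: $\theta^\s\leq\k\d_t$ only forces $s/\theta^\s\leq-\k^{-1}$ at $s=-\d_t$, not $-2\k^{-1}$.)

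Neither gap is cosmetic. To see that the argument cannot be patched without changing the comparison function, take $C_0=0$ and $u$ the solution with boundary data $0$ near the origin and $C_{1,1}$ elsewhere: deep inside $\W$, away from the origin, $u$ is close to $C_{1,1}$, while $\e+\Psi\leq\e+C_{1,1}\theta^\s$ there. The standard way out is to decouple the two roles of the barrier: the portion of the amplitude that must dominate the far boundary data has to be of size $C_{1,1}$ (unscaled), e.g.\ a term $C_{1,1}\,\psi\left(\frac{y-\theta e_1}{\theta},\frac{s}{\theta^\s}\right)$, while a separate term of size comparable to $C_0\theta^\s/\k$ absorbs the forcing; equivalently, one restricts the comparison to the scaled annular region $\W\cap B_{\theta(1+r_0)}(\theta e_1)\times(-c\theta^\s,0]$ where the strict supersolution inequality holds, and then must still supply a barrier of height $\gtrsim C_{1,1}$ on the lateral boundary of that smaller set, which lies inside $\W$. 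Your write-up does neither, so the proof as given does not close; the factor $\theta^\s$ multiplying $C_{1,1}$ is precisely where the estimate you are defending needs to be reexamined.
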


For the initial values we can use a much simpler barrier. Consider $\b:\R^n\to[0,1]$ a smooth function such that $\b = 0$ in $B_1$ and $\b = 1$ in $B_2^c$. We know that $\cM^+_{\cL_0}\b$ is globally bounded and then,
\begin{align*}
\psi(y,s) = \b(y) + (1+\|\cM^+_{\cL_0}\b\|_\8)s,
\end{align*}
satisfies,
\begin{alignat*}{2}
\psi_t - \cM_{\cL_0}\psi &\geq 1 &&\text{ in $\R^n\times\R$},\\
\psi &= 0 &&\text{ in $B_1\times\{0\}$},\\
\psi &\geq 1 &&\text{ in $B_2^c\times(-\8,0]$}.
\end{alignat*}

As a corollary of the Comparison Principle we obtain.

\begin{corollary}\label{cor:initial_data}
Let $\e,\d_x,\d_t \in (0,1)$, $C_0,C_{1,1}\geq 0$, $a>0$ and $u$ such that,
\begin{enumerate}
\item $0 \in \W$,
\item $u_t - \cM^+_{\cL_0}u \leq C_0$ in $\W \times (0,a]$,
\item $u(0,0) = 0$
\item $u \leq \e$ in $\bar C_{\d_x,\d_t}(0,\d_t) \cap \p_p(\W \times (0,a])$,
\item $u \leq C_{1,1}$ in $\p_p(\W \times (0,a])$.
\end{enumerate}
Then for $\theta = \min\1\frac{\d_x}{2},\d_t^{1/\s}\2$ and $\psi$ as defined after Corollary \ref{cor:boundary_data}, we have
\begin{align*}
u(y,s) \leq \e + \1C_0+C_{1,1}\2\theta^{\s}\psi\1\frac{y}{\theta},\frac{s}{\theta^\s}\2 \text{ for $s\in[0,a]$.}
\end{align*}
\end{corollary}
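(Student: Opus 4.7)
My plan is a comparison argument. Set $\theta=\min(\d_x/2,\d_t^{1/\s})$ as in the statement and define the candidate supersolution
\[
v(y,s) \ :=\ \e \ +\ (C_0+C_{1,1})\,\theta^\s\, \psi\!\left(\frac{y}{\theta},\frac{s}{\theta^\s}\right).
\]
Once $v$ is shown to dominate $u$ on the parabolic boundary $\p_p(\W\times(0,a])$ while satisfying the appropriate supersolution inequality, Theorem~\ref{thm:comparison_principle} will give $u\leq v$ on $\W\times(0,a]$, which is the claim.

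The supersolution property is immediate from the scale invariance of $\cL_0$. Indeed, because $L\in\cL_0$ iff the rescaled operator $L^\theta\in\cL_0$, taking the supremum over $\cL_0$ on each side yields $\cM^+_{\cL_0}[\psi(\cdot/\theta)](y)=\theta^{-\s}(\cM^+_{\cL_0}\psi)(y/\theta)$ for smooth/integrable $\psi$; combined with the order-$\s$ time rescaling this gives
\[
v_t-\cM^+_{\cL_0}v \ =\ (C_0+C_{1,1})\bigl(\psi_t-\cM^+_{\cL_0}\psi\bigr)\!\left(\frac{y}{\theta},\frac{s}{\theta^\s}\right) \ \geq\ C_0+C_{1,1} \ \geq\ C_0,
\]
where the last inequality uses the barrier property $\psi_t-\cM^+_{\cL_0}\psi\geq 1$ from the construction after Corollary~\ref{cor:boundary_data}.

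For the boundary verification I would write $\psi(y,s)=\b(y)+Ms$ with $M=1+\|\cM^+_{\cL_0}\b\|_\8$ and use $2\theta\leq \d_x$ and $\theta^\s\leq \d_t$. At $s=0$: for $|y|\leq\theta$ one has $\b(y/\theta)=0$ so $v(y,0)=\e$, and hypothesis~(4) gives $u(y,0)\leq\e$; for $\theta<|y|\leq 2\theta\leq \d_x$ the same inequality persists because $v(y,0)\geq\e$ and $u(y,0)\leq\e$. On the lateral side, whenever $(y,s)\in\W^c\cap\bar C_{\d_x,\d_t}(0,\d_t)$ hypothesis~(4) again gives $u\leq\e\leq v$. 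The remaining part of $\p_p$---the initial slice $\{|y|>\d_x\}\times\{0\}$ together with the lateral piece outside $\bar C_{\d_x,\d_t}$---is handled by combining the lower bound $v(y,0)\geq\e+(C_0+C_{1,1})\theta^\s$ for $|y|\geq 2\theta$ with the linear-in-$s$ growth $(C_0+C_{1,1})Ms$ that $v$ acquires for $s>0$, which eventually dominates the crude bound $u\leq C_{1,1}$ supplied by hypothesis~(5).

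The main technical hurdle is precisely this last step: when $\theta$ is small the initial-time value of $v$ need not exceed $C_{1,1}$, so the barrier does not automatically dominate~(5) at $s=0$ for $|y|>\d_x$. The way I would close it is to apply Theorem~\ref{thm:comparison_principle} on a succession of time slabs, exploiting that outside $\bar C_{\d_x,\d_t}$ the linear-in-$s$ part of $\psi$ swamps $C_{1,1}$ after a controllable time $s_\star\sim C_{1,1}/[(C_0+C_{1,1})M]$, and then propagating the bound forward from $[0,s_\star]$ using the small-$u$ control (4) near the origin together with the improved initial data produced by the first application of comparison. Once the boundary inequality is established, the Comparison Principle delivers $u\leq v$ on $\W\times(0,a]$, which is the stated estimate.
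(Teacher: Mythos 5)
Your overall strategy --- comparison on $\W\times(0,a]$ against the rescaled barrier $v=\e+A\,\psi(y/\theta,s/\theta^\s)$ --- is exactly what the paper intends (it gives no written proof beyond ``as a corollary of the Comparison Principle''), and both your supersolution computation via the scale invariance of $\cM^+_{\cL_0}$ and your boundary check on $\bar C_{\d_x,\d_t}(0,\d_t)\cap\p_p(\W\times(0,a])$ are correct. The difficulty you flag at the end is genuine: with $A=(C_0+C_{1,1})\theta^\s$ the barrier only reaches the value $\e+(C_0+C_{1,1})\theta^\s$ on $\{|y|>\d_x\}\times\{0\}$ and on $\W^c\times(0,s]$ for small $s$, which need not exceed the bound $C_{1,1}$ from hypothesis (5). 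However, your proposed repair by comparison on successive time slabs cannot close this gap: the deficient set contains the initial slice $\R^n\times\{0\}$, which belongs to the parabolic boundary of the very first slab no matter how short it is, together with $\W^c\times(0,s_\star]$; on such a slab the Comparison Principle only yields $u\le v+\sup_{\p_p}(u-v)^+$, which is vacuous, and there is no ``improved initial data'' to propagate because the defect sits at $s=0$ rather than at a positive starting time.

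The actual resolution is that the constant should be $A=C_0\theta^\s+C_{1,1}$ rather than $(C_0+C_{1,1})\theta^\s$ (the statement of Corollary \ref{cor:boundary_data} has the same slip), after which a single application of Theorem \ref{thm:comparison_principle} finishes the proof. Indeed one still has
\begin{align*}
v_t-\cM^+_{\cL_0}v \;=\; A\,\theta^{-\s}\bigl(\psi_t-\cM^+_{\cL_0}\psi\bigr)(y/\theta,s/\theta^\s)\;\ge\; A\,\theta^{-\s}\;\ge\; C_0,
\end{align*}
while every point of $\p_p(\W\times(0,a])\sm\bar C_{\d_x,\d_t}(0,\d_t)$ satisfies either $|y|>\d_x\ge2\theta$, where the cutoff $\b(y/\theta)=1$ gives $v\ge\e+A\ge\e+C_{1,1}$, or $s>\d_t\ge\theta^\s$, where the linear-in-time part gives $v\ge\e+AMs/\theta^\s\ge\e+A\ge\e+C_{1,1}$; in both cases hypothesis (5) yields $u\le v$. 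So: right method, correctly identified obstruction, but the fix is to adjust the multiplicative constant in the barrier, not to iterate the comparison in time.
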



\subsubsection{Perron's Method}

\begin{theorem}\label{thm:existence_and_uniqueness}
Let
\begin{enumerate}
\item $\W$ a bounded domain satisfying the exterior ball condition,
\item $I$ be a translation invariant, uniformly elliptic operator.
\item $f \in C(\W\times(t_1,t_2]) \cap L^\8(\bar\W\times[a,b])$,
\item $g \in C((t_1,t_2]\to L^1(\w_\s)) \cap L^\8(C_{R/2,b-a}(0,b))$ continuous at $\bar\W\times[a,b]$.
\end{enumerate}
Then, the Dirichlet problem,
\begin{alignat*}{2}
u_t - Iu &= f &&\text{ in $\W\times(t_1,t_2]$},\\
u &= g &&\text{ in $\p_p(\W\times(t_1,t_2])$},
\end{alignat*}
has a unique viscosity solution taking the boundary and initial values in a continuous way.
\end{theorem}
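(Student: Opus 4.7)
The plan is to dispose of uniqueness first and then produce a solution by Perron's method. Uniqueness is immediate from the Comparison Principle (Theorem \ref{thm:comparison_principle}) applied to any two candidate solutions sharing the parabolic data. For existence the core task is to produce a viscosity solution on $\W\times(t_1,t_2]$ and to verify that it attains $g$ continuously on $\p_p(\W\times(t_1,t_2])$.

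First I would build global sub- and supersolutions $w^{\pm}$ on $\R^n\times(t_1,t_2]$ which bracket $g$ on the parabolic boundary and reduce to $g$ continuously there. At each lateral point $(x_0,t_0)\in\p\W\times(t_1,t_2]$ the exterior ball condition provides a ball inside $\W^c$ tangent to $x_0$, and combined with the continuity of $g$ at $(x_0,t_0)$, Corollary \ref{cor:boundary_data} yields a local supersolution of $w_t-\cM^+_{\cL_0}w\leq \|f\|_\8$ that majorizes $g$ on $\p_p$ and equals $g(x_0,t_0)+\e$ at $(x_0,t_0)$. At points on the initial slice $\bar\W\times\{t_1\}$ the analogous construction is carried out via Corollary \ref{cor:initial_data}. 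Taking an infimum over these local barriers, which remains a supersolution by Property \ref{pro:viscosity_solution}(1), and absorbing the constants via the Maximum Principle (Theorem \ref{thm:maximum_principle}) yields the global supersolution $w^+$; the symmetric procedure applied to $-g$ and $-f$ gives $w^-\leq w^+$.

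With $w^{\pm}$ in place, set
\[
u(x,t) := \sup\{\, w(x,t) : w\text{ is a viscosity subsolution of }w_t-Iw=f,\ w\leq w^+ \text{ on }\R^n\times(t_1,t_2],\ w=g\text{ on }\p_p\,\}.
\]
The class is non-empty because $w^-$ belongs to it, and it is bounded above by $w^+$ by the Comparison Principle. The upper semicontinuous envelope $u^*$ is a subsolution by the Stability Theorem \ref{thm:stability}: at any candidate contact point one produces a sequence $w_i$ of admissible elements whose vertical translations $\G$-converge to a competitor majorized by $u^*$, and translation invariance of $I$ yields weak self-convergence for free. The lower semicontinuous envelope $u_*$ is a supersolution by the standard bump argument: if it failed to be a supersolution at some interior point, one could perturb $u$ upward inside a small cylinder around that point without violating the admissibility constraints, contradicting maximality of the envelope. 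The Comparison Principle then forces $u^*\leq u_*$, so $u=u^*=u_*$ is a continuous viscosity solution; the sandwich $w^-\leq u\leq w^+$ forces continuous attainment of the prescribed data on $\p_p$.

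The main obstacle I expect is controlling the nonlocal tails during the stability step. Every admissible $w$ must coincide with $g$ on $\W^c\times(t_1,t_2]$, so their tails are literally identical and the $L^1(\w_\s)$ hypotheses of Theorem \ref{thm:stability} are satisfied without extra work; nevertheless one must be careful that truncations and vertical translations used to engineer the approximating sequence stay inside the admissible class, which is why $w^+$ is chosen to dominate globally rather than only inside $\W$. A secondary technical issue is the uniformity of everything in $\s \in [1,2)$: since the constants $\k,r_0$ in Lemma \ref{lem:barrier_boundary} and the Maximum Principle constant are independent of $\s$, the modulus of attainment of the boundary data inherited from $w^{\pm}$ does not degenerate as $\s\nearrow2$, consistent with the $\s$-uniform estimates the rest of the paper will require.
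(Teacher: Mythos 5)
Your proposal is correct and follows essentially the same route as the paper: uniqueness from the Comparison Principle, existence by Perron's method, and continuous attainment of the parabolic data via the barriers of Corollaries \ref{cor:boundary_data} and \ref{cor:initial_data}. The only cosmetic difference is that you take the supremum of subsolutions while the paper takes the smallest supersolution above the boundary values, which is the dual (equivalent) formulation of the same argument.
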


\begin{proof}
The uniqueness part follows from the Comparison Principle \ref{thm:comparison_principle}. For the existence we use the Stability Theorem \ref{thm:stability} and the Comparison Principle \ref{thm:comparison_principle}. By the standard Perron's method we can show the existence of a viscosity solution $u$, defined as the smallest viscosity super solution above the boundary values given by $g$. The Dirichlet boundary problem gets solved by $u$ provided that there exists barriers that force $u$ to take the boundary and initial values in a continuous way. This is implied by Corollaries \ref{cor:boundary_data} and \ref{cor:initial_data}.
\end{proof}


\section{Alexandroff-Bakelman-Pucci type of estimate}\label{section:Alexandroff-Bakelman-Pucci type of estimate}

We prove a non-local version of the classical ABP estimate. These type of estimates play an important role in regularity theory, since they allow to pass an estimate in measure to a pointwise estimate. This turn out to be crucial to prove the Point estimate, the Oscillation Lemma and the Harnack Inequality in the coming sections. For the local case, we refer to the work of K. Tso \cite{Tso85} and L. Wang \cite{Wang92}.

\subsection{Weak Point Estimate}

The following result is a modification of a Lemma established by L. Silvestre in \cite{Silvestre11} where it bounds the distribution of the solution in a way that resembles the mean value theorem. Actually, these linear non-local operators somehow have this formula built-in in their own definition but the price is that the estimate degenerates as $\s\nearrow 2$. Here we consider the different distributions in dyadic rings as was done in \cite{Caffarelli09} for the elliptic case. This is the first step to get a uniform control with respect to $\s$.

\begin{lemma}[Key Lemma]\label{lem:key_lemma}
Let $\D t \in (0,1]$ and suppose $u \geq 0$ satisfies,
\begin{alignat*}{2}
u_t - \cM_{\cL_0}^-u &\geq -f(t) &&\text{ in $C_{1,\D t}$},\\
\|f^+\|_{L^1(-\D t, 0])} &\leq \D t.
\end{alignat*}
Then,
\begin{align*}
\inf_{C_{1/2,\D t/2}} u \geq \D t,
\end{align*}
provided that for some $M > 0$,
\begin{align*}
\frac{|\{u > M2^{2i}\} \cap (B_{2^{i+1}}\sm B_{2^i}) \times (-\D t,-\D t/2]|}{ |(B_{2^{i+1}}\sm B_{2^i}) \times (-\D t,-\D t/2]|} \geq M^{-1},
\end{align*}
for each $i \in \{0,1,\ldots,(k-1)\}$ where $k(2-\s) \geq C$ for some universal constant $C$ independent of $M$ and $\s \in [1,2)$.
\end{lemma}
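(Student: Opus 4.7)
The plan is to argue by contradiction: assume $u(x_0, t_0) < \D t$ for some $(x_0, t_0) \in C_{1/2, \D t/2}$. The strategy is to construct a smooth parabolic test function $\varphi$ that touches $u$ from below at an interior point $(x_1, t_1) \in C_{1, \D t}$, apply the viscosity supersolution inequality in the pointwise form given by Property \ref{pro:viscosity_solution}(3), and use the dyadic mass assumption to lower-bound $\cM^-_{\cL_0} u(x_1, t_1)$ by a constant larger than the barrier's time derivative, producing the contradiction.

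For the barrier I would take
\[
\varphi(x, t) := A(t + \D t)\,\eta(x),
\]
with $A > 2$ a universal constant and $\eta \in C_c^\8$ a cutoff satisfying $\eta \equiv 1$ on $B_{1/2}$, $\eta \equiv 0$ outside $B_{3/4}$, $0 \leq \eta \leq 1$. The assumption $u(x_0, t_0) < \D t$, together with $t_0 + \D t \geq \D t/2$ and $\eta(x_0) = 1$, gives $\varphi(x_0, t_0) > u(x_0, t_0)$, so $s^* := \sup_{C_{1, \D t}}(\varphi - u) > 0$, and the shifted test function $\tilde\varphi := \varphi - s^*$ touches $u$ from below at a maximizer $(x_1, t_1)$. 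Since $\varphi \equiv 0$ outside $B_{3/4}$ and at $t = -\D t$, and $u \geq 0$, the inequality $\varphi - u \leq 0 < s^*$ holds on the parabolic boundary of $B_{3/4} \times (-\D t, 0]$, which places $(x_1, t_1)$ strictly inside.

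By Property \ref{pro:viscosity_solution}(3) applied at $(x_1, t_1)$ with gradient $p = D\tilde\varphi(x_1)$,
\[
A\,\eta(x_1) - \cM^-_{\cL_0} u(x_1, t_1) \geq -f(t_1), \qquad \text{hence} \qquad \cM^-_{\cL_0} u(x_1, t_1) \leq A + f(t_1).
\]
To contradict this I would estimate $\cM^-_{\cL_0} u(x_1, t_1)$ from below by splitting the kernel integral at $|y|=1$. In the near field, $u \geq \tilde\varphi$ with equality at $(x_1, t_1)$ and $\tilde\varphi \in C^{1,1}_x$ give $\d^p u(x_1; y) \geq -C|y|^2$, so the prefactor $(2-\s)$ absorbs $\int_{|y|<1}|y|^{2-n-\s}\,dy$ and yields a uniform-in-$\s$ bounded contribution. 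In the far field, the gradient correction vanishes; $u \geq 0$ gives $(\d^p u)^-(x_1; y) \leq u(x_1, t_1) \leq A$, while on each ring $B_{2^{i+1}} \sm B_{2^i}$ the mass hypothesis provides a set of measure $\geq M^{-1}|B_{2^{i+1}}\sm B_{2^i}|$ on which $(\d^p u)^+(x_1; y) \geq M 2^{2i} - A$. The $i$-th ring therefore contributes
\[
\gtrsim (2-\s)\,\l\, \cdot \frac{(M 2^{2i} - A)\,|B_{2^{i+1}}\sm B_{2^i}|}{M\,(2^{i+1})^{n+\s}} \gtrsim (2-\s)\, 2^{i(2-\s)},
\]
and summing the geometric series over $i = 1, \ldots, k-1$ yields a lower bound of order $2^{k(2-\s)}$, which exceeds $A + f(t_1)$ once $C$ in the hypothesis $k(2-\s) \geq C$ is taken universally large.

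The main technical obstacle I foresee is reconciling time scales: the dyadic mass hypothesis is posed on the slab $(-\D t, -\D t/2]$, whereas the viscosity inequality at $(x_1, t_1)$ uses the tail of $u(\cdot, t_1)$ at an unspecified $t_1 \in (-\D t, 0]$, and the $L^1$ bound $\|f^+\|_{L^1(-\D t, 0]} \leq \D t$ gives no pointwise control on $f(t_1)$. I would address this by tailoring the time profile of the barrier — e.g., replacing $A(t+\D t)$ by $A\,\rho(t)$ with $\rho$ increasing on $(-\D t, -\D t/2]$ and constant afterwards — so that, after a pigeonhole in $t$, the contact time is forced into the mass-rich window, and by integrating the resulting inequality in $t$ so that $\|f^+\|_{L^1}$ rather than $f(t_1)$ enters the contradiction. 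Getting this time bookkeeping right, together with the minor adjustment for small $i$ where $M 2^{2i}$ may not dominate $A$, is the delicate part; the remainder is the standard Silvestre-type interplay between dyadic tail contributions and the $(2-\s)$ scaling.
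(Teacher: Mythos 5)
There is a genuine gap, and it is exactly at the point you flag as the ``main technical obstacle.'' The hypothesis of the lemma is a \emph{space-time} measure bound on the slab $(-\D t,-\D t/2]$: it controls only the time-integral $\int_{-\D t}^{-\D t/2}|G_i(s)|\,ds$ of the ring masses $G_i(s)=\{y\in B_{2^{i+1}}\sm B_{2^i}: u(y,s)>M2^{2i}\}$, and gives no information about $|G_i(s)|$ at any individual time $s$ (even inside the mass window, all the $G_i(s)$ may be empty on a set of times of positive measure). A single-contact-point viscosity argument evaluates $\cM^-_{\cL_0}u(x_1,t_1)$ at one time only, so it cannot exploit the hypothesis; neither of your proposed repairs closes this. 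Pigeonholing the contact time into $(-\D t,-\D t/2]$ does not help, because an $L^1$-in-time lower bound on $F(s):=\sum_i 2^{(2-\s)i}|G_i(s)|/|B_{2^{i+1}}\sm B_{2^i}|$ yields no lower bound on $F$ at the (single, uncontrolled) contact time; and ``integrating the resulting inequality in $t$'' is not available when the touching produces only one inequality at one point. There is also a second, related issue: the conclusion must hold at times $t_0$ up to $0$, i.e.\ \emph{after} the mass window, so the information must be propagated forward in time by the evolution, which a one-shot touching argument does not do.

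The paper's proof resolves precisely this by building a \emph{subsolution} rather than a test function: $v=(m(t)\varphi(x)-\int_{-\D t}^t f^+)\chi_{B_1}+u\chi_{B_1^c}$, with $\varphi$ a bump equal to $1$ on $B_{1/2}$ and supported in $B_{3/4}$. Because $v$ carries the actual exterior values of $u$, the term $\cM^-_{\cL_0}(u\chi_{B_1^c})(x,t)\geq F(t)$ appears as a forcing, and $v$ is a subsolution provided $m'+am=F$, $m(-\D t)=0$. The Duhamel formula $m(t)=\int_{-\D t}^t F(s)e^{-a(t-s)}ds$ is the mechanism that converts the time-integrated hypothesis into a pointwise bound: for $t\geq-\D t/2$ it gives $m(t)\geq c(2^{(2-\s)k}-1)\D t\geq 2\D t$ once $k(2-\s)$ is large, and the comparison principle then yields $u\geq v\geq m-\int f^+\geq\D t$ on all of $C_{1/2,\D t/2}$. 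Your far-field ring computation (the $(2-\s)2^{(2-\s)i}$ contribution per ring and the geometric sum) is the right estimate and reappears in the paper inside $F(t)$; what is missing in your proposal is the ODE/comparison structure that accumulates it in time. To fix your argument you would essentially have to replace the fixed profile $A\rho(t)$ by a profile driven by $F$ itself, which is the paper's construction.
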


\begin{proof}
It suffices to provide a sub solution of the same equation that remains below $u$ on $\p_p C_{3/4,\D t}$ and grows at least up to $\D t$ everywhere in $C_{1/2,\D t/2}$. The following ansatz uses the values given by $u$ in $B_1^c$ allowing some growth for the barrier about the origin:
\begin{align*}
 v(x,t) &:= \1m(t)\varphi(x) - \int_{-\D t}^t f^+(s)ds\2\chi_{B_1}(x) + u(x,t)\chi_{B_1^c}(x),
\end{align*}
where $\varphi$ is a smooth function taking values between zero and one with
\begin{align*}
\supp \varphi = B_{3/4},\\
\varphi = 1 \text{ in $B_{1/2}$}
\end{align*}
and $m$ is function such that $m(-\D t) = 0$. Notice that $m$ determines the grow of $v$ in $C_{1/2,\D t}$. To prove the lemma we will have to arrange $m$ such that:
\begin{enumerate}
\item $v_t - \cM^-_{\cL_0}v \leq -f$ in $C_{3/4,\D t}$,
\item $m \geq 2\D t$ in $[-\D t/2,0]$.
\end{enumerate}

We estimate $v_t - \cM^-_{\cL_0}v$  in $C_{3/4,\D t}$ by the uniform ellipticity identity,
\begin{align*}
v_t - \cM^-_{\cL_0}v \leq m'\varphi - f^+ - m\cM^-_{\cL_0}\varphi - \cM^-_{\cL_0}(u\chi_{B_1^c}).
\end{align*}
Note that $u\chi_{B_1^c}$ is smooth in $C_{3/4,\D t}$ so we can estimate $\cM^-_{\cL_0}(u\chi_{B_1^c})$ in this region in terms of the sets appearing in the hypothesis of the lemma,
\begin{align*}
\cM^-_{\cL_0}(u\chi_{B_1^c})(x,t) &\geq \l(2-\s) \sum_{i=0}^{k-1} \int_{y + x \in B_{2^{i+1}}\sm B_{2^i}} \frac{u(y+x,t)}{|y|^{n+\s}}dy,\\
 &\geq c(2-\s)M\sum_{i=0}^{k-1}2^{(2-\s)i}\frac{|G_i(t)|}{|B_{2^{i+1}}\sm B_{2^i}|},
\end{align*}
where
\begin{align*}
G_i(t) := \{y \in B_{2^{i+1}}\sm B_{2^i} : u(y,t) > M2^{2i}\}.
\end{align*}
So, in order to get that $v$ is a sub solution it suffices that,
\begin{align}\label{eq:key_lemma}
m'\varphi - m\cM^-_{\cL_0}\varphi &\leq F(t)\\
\nonumber &:= c(2-\s)M\sum_{i=0}^{k-1}2^{(2-\s)i}\frac{|G_i(t)|}{|B_{2^{i+1}}\sm B_{2^i}|}
\end{align}

This is the moment to fix $m$. The previous computation suggests us to take $m$ as the solution of an ordinary differential equation:
\begin{align*}
\begin{cases}
m' + am = F \quad \text{$(a>0)$},\\
m(-\D t) = 0.
\end{cases}
\end{align*}
Notice that $\cM^-_{\cL_0}\varphi \geq 0$ if $\varphi \leq \d$ for some universal $\d>0$. In that case the equation automatically implies \eqref{eq:key_lemma} as $\varphi \leq 1$. On the other hand, if $\varphi > \d$, we can also imply \eqref{eq:key_lemma} by taking $a = \|(\cM^-_{\cL_0}\varphi)^-\|_\8/\d$.

We finally need to check that we can make $m \geq 2\D t$ in $[-\D t/2,0]$. By integrating the previous equation, we get
\begin{align*}
m(t) &= \int_{-\D t}^t F(s)e^{-a(t-s)}ds,\\
&= c(2-\s)M\sum_{i=0}^{k-1}2^{(2-\s)i}\int_{-\D t}^t\frac{|G_i(s)|}{|B_{2^{i+1}}\sm B_{2^i}|}e^{-a(t-s)}ds.
\end{align*}
By the hypothesis of the lemma we get that, for $t \geq -\D t/2$,
\begin{align*}
 m(t)\geq c(2-\s)\frac{2^{(2-\s)k}-1}{2^{2-\s}-1}e^{-a\D t}\D t \geq c(2^{(2-\s)k}-1)\D t,
\end{align*}
which is larger than $2\D t$, independently of $\s \in[1,2)$, provided that $(2-\s)k$ is sufficiently large.
\end{proof}

\begin{corollary}\label{cor:key_corollary}
Let $k \sim (2-\s)^{-1}$, as required for the conclusion in the previous lemma, $r \in (0,1]$, $\D t \in (0,(2^{-k}r)^\s]$ and let $u \geq 0$ satisfy,
\begin{align*}
u_t - \cM_{\cL_0}^-u &\geq -C_0 \text{ in $C_{2^{-k}r,\D t}$},\\
u(0,0) &= 0.
\end{align*}
Then, given $M>0$, there exists some non negative integer $i \leq (k-1)$ such that for $r_i = 2^{-i}r$,
\begin{align*}
\frac{|\{u > MC_0r^{-(2-\s)}r_i^2\} \cap (B_{r_i}\sm B_{r_i/2}) \times (-\D t,-\D t/2]|}{ |(B_{r_i}\sm B_{r_i/2}) \times (-\D t,-\D t/2]|} < M^{-1}.
\end{align*}
\end{corollary}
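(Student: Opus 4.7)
The plan is to argue by contradiction, reducing the statement to a single application of the Key Lemma \ref{lem:key_lemma} after an appropriate parabolic rescaling of $u$. Suppose the conclusion fails, so that
\begin{align*}
\frac{|\{u > MC_0 r^{-(2-\s)}r_i^2\}\cap (B_{r_i}\sm B_{r_i/2})\times(-\D t,-\D t/2]|}{|(B_{r_i}\sm B_{r_i/2})\times(-\D t,-\D t/2]|}\geq M^{-1}
\end{align*}
holds for every $i\in\{0,1,\ldots,k-1\}$. To exploit this I would set $\r:=2^{-k}r$, $\a:=C_0\r^\s$, and consider the rescaled function $\tilde u(x,t):=\a^{-1}u(\r x,\r^\s t)$. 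Using the scale invariance of $\cM^-_{\cL_0}$ together with the hypothesis $\D t\leq\r^\s$, the function $\tilde u$ is nonnegative, satisfies $\tilde u_t-\cM^-_{\cL_0}\tilde u\geq -1$ in $C_{1,\D\tilde t}$ with $\D\tilde t:=\r^{-\s}\D t\in(0,1]$, has $\tilde u(0,0)=0$, and the constant right hand side meets the integrability requirement of the Key Lemma trivially.

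Next, the density estimates must be transferred to the density hypothesis of the Key Lemma for $\tilde u$. Under the reindexing $i=k-1-j$, the change of variables $z=\r y$ identifies the original ring $B_{r_i}\sm B_{r_i/2}$ with the rescaled ring $B_{2^{j+1}}\sm B_{2^j}$, and the time slabs correspond exactly after rescaling by $\r^\s$. A direct algebraic matching shows that the original threshold $MC_0 r^{-(2-\s)}r_i^2$ becomes, in the rescaled variables, $\tilde M 2^{2j}$ where $\tilde M:=4M\cdot 2^{-k(2-\s)}$. Since the choice $k\sim(2-\s)^{-1}$ keeps $k(2-\s)$ universally bounded, $\tilde M$ and $M$ are comparable up to a universal factor, and the density assumption translates into the density hypothesis of the Key Lemma for $\tilde u$ with parameter $\tilde M$ at every $j\in\{0,\ldots,k-1\}$.

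Applying Lemma \ref{lem:key_lemma} to $\tilde u$ then yields $\tilde u(0,0)\geq\D\tilde t>0$, directly contradicting $\tilde u(0,0)=\a^{-1}u(0,0)=0$ and closing the argument.

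The hardest part will be the bookkeeping in the rescaling: one must simultaneously coordinate the spatial rescaling $\r=2^{-k}r$, the amplitude normalization $\a=C_0\r^\s$ that produces the normalized right hand side, and the reindexing $i\leftrightarrow k-1-j$ of the dyadic rings, while keeping control of the factor $2^{k(2-\s)}$ generated by the threshold rescaling. That this factor remains universally bounded is precisely why the condition $k\sim(2-\s)^{-1}$ was built into the statement of Lemma \ref{lem:key_lemma}; once this is verified, every remaining step follows by scale invariance and by monotonicity of the density estimate in the threshold.
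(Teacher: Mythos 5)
Your argument is correct and is exactly what the paper intends (the corollary is stated without proof as the contrapositive of Lemma \ref{lem:key_lemma} after the parabolic rescaling $\tilde u(x,t)=(C_0\rho^{\sigma})^{-1}u(\rho x,\rho^{\sigma}t)$ with $\rho=2^{-k}r$), and your bookkeeping of the rings, time slabs and thresholds is right. One point to phrase more carefully: the rescaled density bound you actually obtain is $\geq M^{-1}$ while the Key Lemma with parameter $\tilde M=4\cdot 2^{-k(2-\sigma)}M$ formally asks for $\geq \tilde M^{-1}$, so the hypothesis is only met up to the universal factor $\tilde M/M$; this is harmless because the proof of Lemma \ref{lem:key_lemma} only uses the product of threshold and density, but it does require enlarging the universal constant $C$ in the condition $k(2-\sigma)\geq C$ (or, equivalently, absorbing a universal constant into the threshold of the corollary) rather than being a purely formal substitution.
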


The following Corollary follows from the proof of the Lemma \ref{lem:key_lemma}. It is equivalent to the estimate in \cite{Silvestre11} and it gives a point estimate in $L^1$ for the solution, with the draw back that the estimate degenerates as $\s$ goes to two. Later on we will see how to obtain an estimate independent of $\s$ by allowing the estimate to depend now on the weak $L^\e$ norm of the solution for $\e$ sufficiently small.

There is one technical observation, the (spatial) integral on the left-hand side is computed in $\R^n$ instead of an annular region. This can be retrieved in two steps, first bounding the integral in $(\R^n \sm B_{1/2})\times(-1,-1/4]$ and then using a point in $B_{7/8} \sm B_{5/8} \times (-1/2,-1/4]$ where $u$ is bounded by the previous step to complete the bound in the missing region.

\begin{corollary}[Weak Point Estimate]\label{cor:weak_point_estimate}
Let $u \geq 0$ satisfy,
\begin{align*}
u_t - \cM_{\cL_0}^-u &\geq -f(t) \text{ in $C_{1,1}$}.
\end{align*}
Then for some universal $C>0$ independent of $\s$,
\begin{align*}
(2-\s)\int_{-1}^{-1/2}\|u(t)\|_{L^1(\w_\s)}dt \leq C\1u(0,0) + \|f^+\|_{L^1(-1,0)}\2.
\end{align*}
\end{corollary}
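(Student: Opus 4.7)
The plan is to re-run the sub-solution construction of Lemma~\ref{lem:key_lemma} using a direct $L^1(\w_\s)$ lower bound for $\cM_{\cL_0}^-(u\chi_{B_{1/2}^c})$ in place of the distribution-function bound on dyadic rings used in the original argument; this will yield a clean bound on the outer region $(\R^n\sm B_{1/2})\times(-1,-1/4]$. The missing inner piece on $B_{1/2}$ is then recovered by applying the same argument a second time, centered at an interior point where the outer bound already controls $u$, exactly as suggested in the remark preceding the statement.

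For Step~1, I would take a smooth cut-off $\varphi$ supported in $B_{3/8}$ with $\varphi\equiv 1$ on $B_{1/4}$ and build the ansatz
$$v(x,t)=\1 m(t)\varphi(x) - \int_{-1}^t f^+(s)\,ds\2\chi_{B_{1/2}}(x)+u(x,t)\chi_{B_{1/2}^c}(x).$$
Using $|z-x|\leq 2|z|$ for $x\in B_{3/8}$ and $z\in B_{1/2}^c$, together with $\w_\s(z)=1$ on $B_1\sm B_{1/2}$ and $\w_\s(z)=|z|^{-(n+\s)}$ on $B_1^c$, one obtains
$$\cM_{\cL_0}^-(u\chi_{B_{1/2}^c})(x,s)\geq c\l(2-\s)\|u(\cdot,s)\|_{L^1(\w_\s,B_{1/2}^c)}=:F(s)$$
uniformly in $x$. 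Choosing $m$ as the solution of $m'+am=F$ with $m(-1)=0$ and $a$ tuned to $(\cM_{\cL_0}^-\varphi)^-$ as in the proof of Lemma~\ref{lem:key_lemma}, the function $v$ is a sub-solution. The Comparison Principle gives $v\leq u$, and evaluating at $(0,0)$ yields $m(0)\leq u(0,0)+\|f^+\|_{L^1(-1,0)}$. Since $m(0)\geq e^{-a}\int_{-1}^{-1/4}F(s)\,ds$, unwinding the ODE produces the outer estimate
$$(2-\s)\int_{-1}^{-1/4}\|u(\cdot,s)\|_{L^1(\w_\s,B_{1/2}^c)}\,ds\leq C(u(0,0)+\|f^+\|_{L^1(-1,0)}).$$

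For Step~2, I would restrict the outer estimate to $y\in B_{7/8}\sm B_{5/8}$ (where $\w_\s\equiv 1$) and $s\in(-1/2,-1/4]$, which controls the unweighted integral of $u$ over that cylinder. By Markov's inequality together with the lower semicontinuity of $u$, there is a point $(x_0,t_0)$ in this cylinder at which $u(x_0,t_0)$ is controlled by $u(0,0)+\|f^+\|_{L^1}$ (up to the appropriate $\s$-dependent scaling). Re-running Step~1 centered at $(x_0,t_0)$, with a cut-off supported in $B_{1/16}(x_0)$ and $B_{1/8}(x_0)$ playing the role of $B_{1/2}$ and a time interval reaching back to $-1$, yields a bound on $(2-\s)\int_{-1}^{-1/2}\|u(\cdot,s)\|_{L^1(\w_\s^{x_0},B_{1/8}(x_0)^c)}\,ds$ in terms of $u(x_0,t_0)+\|f^+\|_{L^1}$. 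Because $|x_0|\geq 5/8$ implies $B_{1/2}\subset B_{1/8}(x_0)^c$, and the shifted weight $\w_\s^{x_0}(y):=\min(1,|y-x_0|^{-(n+\s)})$ is bounded above and below by universal constants on $B_{1/2}$, this recovers the missing inner bound $(2-\s)\int_{-1}^{-1/2}\int_{B_{1/2}}u(y,s)\,dy\,ds$, and combining with Step~1 closes the estimate.

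The main obstacle will be the $(2-\s)$-bookkeeping in Step~2: a naive use of Markov loses a factor of $(2-\s)^{-1}$ in the pointwise bound on $u(x_0,t_0)$, and one has to choose the scale of the shifted barrier at $(x_0,t_0)$ so that the explicit $(2-\s)$ already present in the sub-solution construction exactly absorbs that loss, yielding the $\s$-uniform constant claimed in the statement. This is precisely where the $(2-\s)$ weight on the left-hand side of the conclusion earns its keep.
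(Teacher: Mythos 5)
Your two-step plan is exactly the one the paper itself sketches in the remark preceding the statement, and your Step~1 is sound: the sub-solution with the ODE $m'+am=F$ and $F(s)\geq c\l(2-\s)\|u(\cdot,s)\|_{L^1(\w_\s,B_{1/2}^c)}$ is a faithful rerun of the construction in Lemma \ref{lem:key_lemma} and does deliver the outer bound. The gap is in Step~2, and you have correctly located it but not closed it. Chebyshev applied to the outer estimate only produces a point with $u(x_0,t_0)\leq C(2-\s)^{-1}(u(0,0)+\|f^+\|_{L^1})$, and the shifted barrier at $(x_0,t_0)$ then returns
\begin{align*}
(2-\s)\int_{-1}^{t_0}\int_{B_{1/2}}u\,dy\,ds\leq C\left(u(x_0,t_0)+\|f^+\|_{L^1}\right)\leq C(2-\s)^{-1}\left(u(0,0)+\|f^+\|_{L^1}\right),
\end{align*}
i.e.\ the inner contribution is controlled only after multiplying by $(2-\s)^{2}$, one power worse than claimed. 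Your proposed remedy --- tuning the scale of the shifted barrier --- cannot repair this: $\cL_0$ is scale invariant and the factor $(2-\s)$ is a normalization of the operator, not of the scale, so every lower bound of the form $\cM^-_{\cL_0}(u\chi_{B_\rho(x_0)^c})\geq c(2-\s)\int u$ carries exactly one power of $(2-\s)$ no matter what $\rho$ is, while shrinking $\rho$ only hurts you through the ODE constant $a\sim\rho^{-\s}$ in the factor $e^{-a}$. Averaging the Step~2 inequality over all $(x_0,t_0)$ in the annulus instead of selecting one point gives the same loss.

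As written, then, the argument proves the corollary with $C$ replaced by $C(2-\s)^{-1}$. That weaker form still suffices for the first use of the corollary (the point estimate for $\s$ bounded away from $2$), but it is not enough for \eqref{eq:harnack} in the proof of the Harnack inequality, where the single power of $(2-\s)$ is precisely what keeps the constants uniform as $\s\nearrow 2$. Closing the gap requires a genuinely different mechanism for the inner region --- morally, $u(0,0)\geq c\int_{B_{1/2}}u(y,s)\,dy-C\|f^+\|_{L^1}$ for each fixed $s\in(-1,-1/2]$, a parabolic mean-value/on-diagonal lower bound carrying no factor of $(2-\s)$ at all, which does not follow from the ring construction of Lemma \ref{lem:key_lemma}. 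To be fair, the paper's own remark is equally silent on this point; but since your write-up explicitly asserts that the loss can be absorbed, that assertion is the step that still needs a proof.
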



\subsection{Preliminary Lemmas}

Here we fix some hypothesis and notation that we will use for the next results.

\begin{enumerate}
\item $u_t - \cM^-_{\cL_0}u \geq -1$ in $C_{2,1}$.
\item $u \geq 1$ in $\p_p C_{1,1}$
\item $\sup_{C_{1,1}}u^- = |u(x_0,t_0)| \in (0,1]$ for some $(x_0,t_0) \in C_{1,1}$.
\item Let $\G$ be the parabolic convex envelope of $u$ supported in $B_d$ for some $d \geq 2$ sufficiently large and to be fixed,
\begin{align*}
\G(x,t) := &\sup\{p\cdot(x-x_0) + h:\\
&p\cdot(y-x_0) + h \leq -u^-(y,s) \ \forall (y,s) \in C_{d,1+t}(0,t)\}.
\end{align*}
\item Let $ D\G(x,t)$ be the set of sub differentials of $\G$ at $(x,t)$, 
\begin{align*}
 D\G(x,t) := \{p \in \R^n: p\cdot(y-x) + \G(x,t) \leq \G(y,s) \ \forall (y,s) \in C_{d,1+t}(0,t)\}.
\end{align*}
Note that $ D\G(B_d,t) =  D\G(B_1,t)$. We denote 
\begin{align*}
| D\G(x,t)| := \sup_{p\in D\G(x,t)} |p|.
\end{align*}
\item Let $h(\cdot,t): D\G(B_d,t)\to(-\8,0]$ be the Legendre transform of $\G$ centered at $x_0$ (minimum), that is
\begin{align*}
h(p,t) &:= \inf_{y \in B_d} \1\G(y,t) - p\cdot(y-x_0)\2\\
&= \sup\{h:p\cdot (y-x_0) + h \leq -u^-(y,t) \ \forall y \in B_d\}.
\end{align*}
\item Let $\Phi(x,t) := ( D\G(x,t),h( D\G(x,t),t))$.
\item Let $\Sigma := \{u = \G\} \ss C_{1,1}$ be the contact set.
\item Given $(p,h) \in \R^n\times\R$, let
\begin{align*}
P_{p,h}(y) = (p\cdot(y-x_0) + h)\chi_{B_2}(y) + \chi_{B_2^c}(y).
\end{align*}
\end{enumerate}

The following are some preliminary lemmas.


\begin{lemma}\label{lem:MP_greater_than_zero}
Given $p\in D\G(B_1,t)$ and $h = h(p,t)$, the following properties hold:
\begin{enumerate}
\item $|p| \leq \frac{1}{d-1}$,
\item $-\frac{d+2}{d-1}\sup_{C_{1,1}}u^- \leq P_{p,h} \leq 0$ in $B_2$,
\item $\cM^-_{\cK_0} P_{p,h} > 0$ in $B_1$ provided that $d$ is sufficiently large, independently of $\s \in [1,2)$.
\end{enumerate}
\end{lemma}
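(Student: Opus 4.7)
The proof reduces to three independent geometric estimates. I would handle (1) by exploiting that $u \geq 1$ on the non-local parabolic boundary of $C_{1,1}$ forces $-u^- \equiv 0$ on $(B_d \sm B_1) \times (-1, 0]$; then (2) follows from (1) together with the Legendre identification of $h$; and (3) is a direct computation once the jump of $P_{p,h}$ across $\p B_2$ is isolated.

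For (1), the constant affine function $y \mapsto -\sup u^-$ is admissible in the definition of $\G$, so $\G \geq -\sup u^-$ on $B_d \times (-1, 0]$, while $\G \leq -u^- \leq 0$ there by construction. Given $p \in D\G(x, t)$ with $x \in B_1$, applying the sub-differential inequality at the point $y = x + (d - |x|)p/|p| \in \bar B_d$ yields
\[
(d - |x|)|p| \leq \G(y, t) - \G(x, t) \leq \sup u^-,
\]
and hence $|p| \leq \sup u^-/(d-1) \leq 1/(d-1)$.

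For (2), use the Legendre duality $h = \G(x, t) - p\cdot(x - x_0)$ valid at any contact point $x \in B_1$ with $p \in D\G(x, t)$, so that
\[
P_{p, h}(y) = \G(x, t) + p\cdot(y - x).
\]
The upper bound $P_{p, h}(y) \leq \G(y, t) \leq 0$ on $B_2 \ss B_d$ is immediate from the sub-differential inequality. For the lower bound, combine $\G(x, t) \geq -\sup u^-$, $|y - x| \leq 3$ for $x \in B_1$ and $y \in B_2$, and the estimate from (1):
\[
P_{p, h}(y) \geq -\sup u^- - 3|p| \geq -\sup u^- \1 1 + \tfrac{3}{d - 1} \2 = -\tfrac{d+2}{d-1} \sup u^-.
\]

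For (3), I would compute $\d P_{p, h}(x; z)$ explicitly for $x \in B_1$. Since $x + z \notin B_2$ forces $|z| > 1$ automatically, only three regimes appear: $\d = 0$ for $|z| \leq 1$ (because $P_{p, h}$ is affine in $B_2$), $\d = p \cdot z$ on $\{|z| > 1,\, x + z \in B_2\}$, and $\d = 1 - P_{p, h}(x) \geq 1$ on $\{x + z \notin B_2\}$. This yields $\d^+ \geq \chi_{\{x + z \notin B_2\}}$ and $\d^- \leq |p||z| \chi_{B_3 \sm B_1}$, whence
\[
\cM^-_{\cK_0} P_{p, h}(x) \geq (2 - \s) \l \int_{B_3^c} \frac{dz}{|z|^{n+\s}} - (2 - \s) \L |p| \int_{B_3 \sm B_1} \frac{dz}{|z|^{n+\s-1}}.
\]
Both integrals are controlled from below and above by universal constants uniformly in $\s \in [1, 2)$ by elementary radial integration, so the right-hand side is at least $(2-\s) \1 c\l - C/(d-1) \2$, which is strictly positive once $d$ is chosen large enough, independently of $\s$. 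The main nuisance is the vanishing prefactor $(2 - \s)$ in this lower bound as $\s \nearrow 2$; however, only strict positivity is claimed in the statement, and the choice of $d$ making the bracket positive is itself uniform in $\s \in [1, 2)$, so nothing further is needed.
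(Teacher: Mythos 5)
Your proof is correct and takes essentially the same route as the paper's: (1) and (2) follow from the supporting plane lying below $0$ on $B_d$ while equalling $\G(x,t)\geq-\sup_{C_{1,1}}u^-$ at the contact point in $B_1$, and (3) from splitting the integral over $B_1^c$ into the region where $P_{p,h}$ jumps to $1$ (contributing at least $c\l(2-\s)$, uniformly in $\s\in[1,2)$) and the annulus where the affine part contributes at worst $-C\L|p|(2-\s)$. The only cosmetic difference is that you lower-bound the positive contribution over $B_3^c$ rather than over $\{z: x+z\in B_2^c\}$, which is harmless.
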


\begin{proof}
(1) and (2) follow from the fact that the plane $y \mapsto (p\cdot(y-x_0) + h)$ remains below zero in $B_d$ and crosses the level set $-\sup_{C_{1,1}}u^- \in [-1,0)$ at some point in $B_1$. Using these two properties we can estimate $\cM^-_{\cK_0} P_{p,h}$ in $B_1$ in the following way. For $K \in \cK_0$ and $x\in B_1$, we have
\begin{align*}
L_KP_{p,h}(x) &= (2-\s)\int_{B_1^c} (P_{p,h}(y+x) - P_{p,h}(x))\frac{K(y)dy}{|y|^{n+\s}},\\
&\geq (2-\s)\1\int_{B_2^c(-x)}\frac{K(y)dy}{|y|^{n+\s}} + \int_{B_2(-x) \sm B_1}p\cdot(y-x)\frac{K(y)dy}{|y|^{n+\s}}\2,\\
&\geq (2-\s)\1C_{1,1} - \frac{C_2}{d-1}\2.
\end{align*}
This implies (3) by taking $d$ sufficiently large.
\end{proof}


\begin{lemma}\label{lem:properties_h}
Given $(t,t+\D t] \ss (-1,0]$, the following properties hold for $h$:
\begin{enumerate}
\item The domain of $h(\cdot,t)$ is non decreasing in time. i.e.
\begin{align*}
 D\G(B_1,t) \ss  D\G(B_1,t+\D t).
\end{align*}
\item $h$ is non increasing in time.
\item $h$ is Lipschitz in time. More precisely, for $p \in  D\G(x,t)$
\begin{align*}
\D h := h(p,t+\D t) - h(p,t) \geq -C\D t,
\end{align*}
for some universal $C$.
\end{enumerate}
\end{lemma}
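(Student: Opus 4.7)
The three claims will be established in the order (2), (3), (1).

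Claim (2) is immediate from the primary definition $h(p,t) = \inf_{y\in B_d}(\G(y,t) - p\cdot(y-x_0))$: every plane $P$ remaining below $-u^-$ on the longer cylinder $B_d\times(-1,t+\D t]$ is in particular below $-u^-$ on $B_d\times(-1,t]$, so taking suprema gives $\G(\cdot,t+\D t) \leq \G(\cdot,t)$ pointwise, and hence $h(p,t+\D t) \leq h(p,t)$.

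For (3), fix $p \in D\G(x_*,t)$ with $x_* \in B_1$, let $h = h(p,t)$, and introduce the barrier
\begin{align*}
\tilde v(y,s) := u(y,s) - P_{p,h}(y) + C(s-t),
\end{align*}
where $P_{p,h}$ is the auxiliary function of item~(9) of the setup and $C$ is a universal constant to be fixed. The plan is to show $\tilde v \geq 0$ on $B_1\times(t,t+\D t]$ by the Maximum Principle (Theorem~\ref{thm:maximum_principle}). At $s=t$, the inequality $p\cdot(y-x_0)+h \leq -u^-(y,t) \leq u(y,t)$ on $B_2$ together with $P_{p,h}(y) = 1 \leq u(y,t)$ on $B_2^c$ (from the hypothesis $u\geq 1$ on $\p_p C_{1,1}$) yields $\tilde v(\cdot,t)\geq 0$ on $\R^n$; the same pointwise comparisons combined with $C(s-t)\geq 0$ give $\tilde v \geq 0$ on $B_1^c\times(t,t+\D t]$. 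If $\varphi$ is a test function touching $\tilde v$ from below at a point of $B_1\times(t,t+\D t]$, then $\varphi + P_{p,h} - C(s-t)$ touches $u$ from below; combining $u_t - \cM^-_{\cL_0}u \geq -1$ with the subadditivity of $\cM^-_{\cL_0}$ produces
\begin{align*}
\varphi_{t^-} - \cM^-_{\cL_0}\varphi \geq C - 1 + \cM^-_{\cL_0} P_{p,h} \geq C - 1 - \b|p|,
\end{align*}
the second step using Lemma~\ref{lem:MP_greater_than_zero}~(3) together with the identity $\cM^-_{\cL_0} \geq \cM^-_{\cK_0} - \b|D\cdot|$ from Section~\ref{section:Preliminaries}. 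Since $|p|\leq 1/(d-1)$ by Lemma~\ref{lem:MP_greater_than_zero}~(1), the choice $C := 1 + \b/(d-1)$ is universal and renders $\tilde v$ a viscosity supersolution of $\tilde v_s - \cM^-_{\cL_0}\tilde v \geq 0$. The Maximum Principle then delivers $\tilde v \geq 0$ on $B_1\times(t,t+\D t]$, which is equivalent to saying that the plane $p\cdot(y-x_0)+h-C\D t$ stays below $-u^-$ on all of $B_d\times(-1,t+\D t]$ (for $s\leq t$ this is the original inequality; for $s>t$ and $y\in B_1$ it is $\tilde v\geq 0$; for $y\in B_d\sm B_1$ it reduces to $P\leq 0 = -u^-(y,s)$). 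Hence $h(p,t+\D t) \geq h - C\D t$.

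Finally (1) follows quickly from (3): for $p\in D\G(B_1,t)$, the lower bound on $h(p,t+\D t)$ ensures that the plane $p\cdot(y-x_0)+h(p,t+\D t)$ supports $\G(\cdot,t+\D t)$ from below on $B_d$ with a contact point $x'\in\bar B_d$ obtained by compactness, and the observation $D\G(B_d,\cdot) = D\G(B_1,\cdot)$ recorded in item~(5) of the setup relocates the contact into $B_1$. The main obstacle is the viscosity computation in (3), where Lemma~\ref{lem:MP_greater_than_zero}~(3) is exactly what is needed to absorb the lower bound on $\cM^-_{\cL_0}P_{p,h}$ into the linear-in-time correction $C(s-t)$.
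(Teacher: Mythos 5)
Your proof is correct and follows essentially the same route as the paper: parts (1)--(2) from the monotonicity of $\G$ in time, and part (3) by comparing $u$ against $P_{p,h}$ plus a linear-in-time correction, with Lemma \ref{lem:MP_greater_than_zero} supplying both the bound $|p|\leq 1/(d-1)$ and the sign of $\cM^-_{\cK_0}P_{p,h}$. The only cosmetic difference is that you run the comparison forward via the Maximum Principle applied to $\tilde v$, whereas the paper locates the last touching time of the downward-translating plane $P_{p,h}+\frac{\D h}{2\D t}(s-t)$ and plugs that test function into the equation directly.
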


\begin{proof}
The first two properties are consequences of the monotonicity of $\G$. If at time $t$, the plane $y \mapsto (p\cdot(y-x_0) + h)$ is a supporting plane for the graph of $\G(\cdot,t)$ then at time $(t + \D t)$ it crosses or touches the graph of $\G(\cdot,t+\D t) \leq \G(\cdot,t)$ while remaining below zero in $B_d$. Therefore by lowering $h$ we can find a supporting plane for $\G(\cdot,t+\D t)$ with the same slope $p$.

For the last part we fix $p \in  D\G(B_1,t)$ and $h = h(p,t)$. Assume that $\D h < 0$ and consider the following test function,
\begin{align*}
v(y,s) &= P_{p,h}(y) + \frac{\D h}{2\D t}(s-t),
\end{align*}
and note that $v$ has to cross $\G$ in $B_2\times\{t+\D t\}$. By the definition of $\G$, $\1P_{p,h} + \frac{\D h}{2\D t}\2$ also has to cross $u$ in $C_{1,\D t}(0,t+\D t)$ meanwhile remaining below $u$ in $\p_pC_{1,\D t}(0,t+\D t)$. Let $t_1 \in (t,t+\D t]$ be the last time when $\1P_{p,h} + \frac{\D h}{2\D t}\2 < u$,
\begin{align*}
t_1 = \sup\3s \in (t,t+\D t]:\1P_{p,h} + \frac{\D h}{2\D t}\2(\cdot,s) < u(\cdot,s) \text{ in $C_{1,\D t}(0,t+\D t)$}\4
\end{align*}
Then $\tilde v(y,s) = v(y,s-(t+\D t - t_1))$ is a test function touching $u$ from below at some $(x_1,t_1) \in C_{1,\D t}(0,t)$. Plugging it into the equation for $u$ and using Lemma \ref{lem:MP_greater_than_zero} we obtain that,
\begin{align*}
-1 &\leq \tilde v_t(x_1,t_1) - \cM^-_{\cL_0}\tilde v(x_1,t_1) \leq \frac{\D h}{2\D t} + \b|p|,
\end{align*}
which concludes the proof as $|p|$ remains bounded according to Lemma \ref{lem:MP_greater_than_zero}.
\end{proof}

\begin{corollary}\label{cor:h}
Given $p \in  D\G(B_1,t)$ and $h = h(p,t)$, then 
\begin{align*}
P_{p,h} - C\D t\leq \G \text{ in $C_{1,1+t+\D t}(0,t+\D t)$}.
\end{align*}
\end{corollary}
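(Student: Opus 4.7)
The plan is to split the parabolic cylinder $B_1 \times (-1, t+\D t]$ into two time slabs and handle each one separately, using the characterization of the Legendre transform $h$ together with Lemma~\ref{lem:properties_h}.

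On the earlier slab $s \in (-1, t]$ the bound is essentially tautological. By hypothesis $p \in D\G(x,t)$ for some $x \in B_1$, and by the definition of $h = h(p,t)$ one has $\G(x,t) = p\cdot(x - x_0) + h$. Combining these two identities with the subdifferential inequality $p\cdot(y-x) + \G(x,t) \leq \G(y,s)$ yields
\[ p\cdot(y - x_0) + h \leq \G(y,s) \qquad \text{for every } (y,s) \in B_d \times (-1, t]. \]
Since $P_{p,h}(y) = p\cdot(y - x_0) + h$ on $B_2$, and $B_1 \subset B_2 \subset B_d$, and since $C\D t \geq 0$, this already delivers $P_{p,h} - C\D t \leq \G$ on $B_1 \times (-1, t]$.

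For the remaining slab $s \in (t, t + \D t]$ I would invoke parts (1) and (3) of Lemma~\ref{lem:properties_h}. Part (1) propagates the subdifferential forward in time, yielding $p \in D\G(B_1, s)$; part (3) controls the decay of the Legendre transform along $p$, giving $h(p,s) \geq h - C(s-t) \geq h - C\D t$. Repeating the argument of the first slab but with the later subdifferential in place of the original one, I obtain $p\cdot(y-x_0) + h(p,s) \leq \G(y,s)$ for every $y \in B_d$. Substituting the lower bound $h(p,s) \geq h - C\D t$ and restricting to $B_1$ closes the estimate on $B_1 \times (t, t+\D t]$.

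The argument is essentially a bookkeeping exercise combining three previously established facts: the definition of the Legendre transform (which identifies the value of $\G$ along a subdifferential direction with the plane $p \cdot (\, \cdot\, - x_0) + h(p, \cdot)$) and parts (1) and (3) of Lemma~\ref{lem:properties_h}. I do not anticipate a real obstacle; the only point worth flagging is that the indicator tail in the definition of $P_{p,h}$ plays no role here, since the estimate is asserted only in $B_1 \subset B_2$, where $P_{p,h}$ coincides with its affine part.
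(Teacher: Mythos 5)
Your argument is correct and is exactly the intended one: the paper states this as an immediate consequence of Lemma~\ref{lem:properties_h} without writing out a proof, and your two-slab bookkeeping (the subdifferential inequality over $B_d\times(-1,t]$ for the earlier times, parts (1) and (3) of Lemma~\ref{lem:properties_h} for $s\in(t,t+\D t]$) is the natural way to fill it in.
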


The following lemma can be found in \cite{Davila12}.

\begin{lemma}\label{tapaarriba}
Let $\G:C_{r,\D t} \to \R$ be a parabolic convex function such that
\begin{align*}
\frac{|\{\G > M\} \cap (B_r \sm B_{r/2})\times(-\D t,-\D t/2]|}{|(B_r \sm B_{r/2})\times(-\D t,-\D t/2]|} < \e_0.
\end{align*} 
Then $\G \leq M$ in $C_{r/2,\D t/2}$ provided that $\e_0$ is sufficiently small, depending only on the dimension.
\end{lemma}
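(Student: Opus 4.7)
The plan is to exploit the two structural properties of a parabolic convex function $\G$---convexity in the spatial variable at each fixed time, and monotonicity (non-increasing) in the time variable at each fixed spatial point---to reduce the cylindrical measure hypothesis to a purely spatial statement on a single time slice, which is then closed by a classical separation argument for convex sets. By monotonicity in time, $\G(y,s)\leq\G(y,-\D t/2)$ for every $y$ and every $s\in(-\D t/2,0]$, so it suffices to prove $\G(\cdot,-\D t/2)\leq M$ on $B_{r/2}$. The same monotonicity gives the inclusion $\{\G(\cdot,-\D t/2)>M\}\subset\{\G(\cdot,t)>M\}$ for every $t\in(-\D t,-\D t/2]$, so Fubini combined with the measure hypothesis yields
\begin{align*}
\frac{\D t}{2}\,|\{\G(\cdot,-\D t/2)>M\}\cap(B_r\sm B_{r/2})|&\leq|\{\G>M\}\cap(B_r\sm B_{r/2})\times(-\D t,-\D t/2]|\\
&<\e_0\,|B_r\sm B_{r/2}|\,\frac{\D t}{2},
\end{align*}
and dividing through gives $|\{\G(\cdot,-\D t/2)>M\}\cap(B_r\sm B_{r/2})|<\e_0\,|B_r\sm B_{r/2}|$.

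Write $f:=\G(\cdot,-\D t/2)$, a convex function on $B_r$, and suppose for contradiction that $f(x_0)>M$ for some $x_0\in B_{r/2}$. Since $\{f\leq M\}$ is closed and convex and does not contain $x_0$, the strict separation theorem furnishes a unit vector $v$ and a scalar $c>\langle v,x_0\rangle$ with $\{f\leq M\}\subset\{y:\langle v,y\rangle>c\}$. Taking complements in $B_r$ and using that $\langle v,y-x_0\rangle<0$ implies $\langle v,y\rangle<\langle v,x_0\rangle<c$, we obtain
\begin{align*}
\{y\in B_r:\langle v,y-x_0\rangle<0\}\subset\{f>M\}\cap B_r,
\end{align*}
and in particular $|\{f>M\}\cap(B_r\sm B_{r/2})|\geq|\{y\in B_r\sm B_{r/2}:\langle v,y-x_0\rangle<0\}|$.

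The key geometric fact now is that there exists a dimensional constant $c_n>0$ with
\begin{align*}
|\{y\in B_r\sm B_{r/2}:\langle v,y-x_0\rangle<0\}|\geq c_n\,|B_r\sm B_{r/2}|
\end{align*}
for every $x_0\in\bar B_{r/2}$ and every $v\in S^{n-1}$. Combined with the bound from the first step this forces $c_n\leq \e_0$, contradicting the choice of $\e_0$ and finishing the proof upon taking $\e_0:=c_n/2$. The point deserving some care is precisely this uniform dimensional estimate: it holds because $x_0\in\bar B_{r/2}$ is separated from $\p B_r$ by a gap of size at least $r/2$, so any half-space through $x_0$ necessarily carves out a spherical cap of $B_r$ of measure comparable to $|B_r\sm B_{r/2}|$. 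One can establish it either by an explicit spherical-cap computation or, more cleanly, by scaling to $r=1$ and invoking the continuity and strict positivity of the measure as a function of $(x_0,v)$ on the compact set $\bar B_{1/2}\times S^{n-1}$.
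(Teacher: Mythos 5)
Your proof is correct. The paper itself gives no argument for this lemma (it only cites \cite{Davila12}), and your route --- using monotonicity in $t$ to reduce everything to the slice $\{s=-\D t/2\}$ via Fubini, and then spatial convexity plus a separating hyperplane to show that $\G>M$ at a point of $B_{r/2}$ forces $\G>M$ on a half-ring of measure $\geq c_n|B_r\sm B_{r/2}|$ --- is exactly the standard one, so this matches the intended proof. The only cosmetic imprecision is that $\{\G(\cdot,-\D t/2)\leq M\}$ is only relatively closed in $B_r$; this is harmless, since non-strict separation of $x_0$ from a convex set not containing it already yields the inclusion of the half-space $\{\langle v,y-x_0\rangle<0\}$ in $\{\G(\cdot,-\D t/2)>M\}$, which is all the argument uses.
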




\subsection{Covering the Contact Set: Alexandroff-Bakelman-Pucci type of estimate}

We show in the next two lemmas how to cover the contact set $\Sigma$ with pieces where $u$ detaches from $\G$ in a controlled way. For this we use the Key Lemma \ref{lem:key_lemma} and the tools from the previous section. The first result finds a configuration for each point in $\Sigma$ meanwhile the second lemma provides an algorithm which produces a covering with some desired properties.

\begin{lemma}\label{lem:covering_piece}
Let $k \sim (2-\s)^{-1}$ as in Lemma \ref{lem:key_lemma}, $r \in (0,1]$, $\D t \in (0,(2^{-k}r)^2]$, $(x,t) \in \Sigma \cap C_{1,1-\D t}(0,-\D t/2)$, $p\in  D\G(x,t)$, $h = h(p,t)$. There exists some non negative integer $i \leq (k-1)$, such that the following holds for $r_i = 2^{-i}r$:
\begin{enumerate}
\item\textbf{ Control for $u$ detaching from $P_{p,h}$:} For some universal $C$ and $\e_0$ as in the Lemma \ref{tapaarriba},
\begin{align*}
\frac{|\{u > P_{p,h} + Cr^{-(2-\s)}r_i^2\} \cap R_{r_i,\D t}(x,t)|}{|R_{r_i,\D t}(x,t)|} < \e_0,
\end{align*}
where,
\begin{align*}
 R_{r_i,\D t}(x,t) := (B_{r_i}(x) \sm B_{r_i/2}(x)) \times (t-\D t,t-\D t/2].
\end{align*}
\item \textbf{Flatness for $\G$:} In $C_{r_i/2,\D t}(x,t+\D t)$
\begin{align*}
|\G - P_{p,h}| \leq Cr^{-(2-\s)}r_i^2.
\end{align*}
\item \textbf{Control of the jacobian measure of $\Phi$:}
\begin{align*}
\frac{|\Phi(C_{r_i/4,\D t}(x,t+\D t))|}{|C_{r_i/4,\D t}(x,t +\D t)|} \leq Cr^{-(2-\s)n}.
\end{align*}
\end{enumerate}
\end{lemma}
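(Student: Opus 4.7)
The plan is to apply Corollary \ref{cor:key_corollary} to the translated function $w := u - P_{p,h}$ based at the contact point $(x,t)$, then convert the resulting measure estimate for $u$ into a pointwise flatness estimate for $\G$ via Lemma \ref{tapaarriba}, and finally upgrade that flatness to a gradient bound by a convexity argument to control the Jacobian measure of $\Phi$.

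First I verify that $w$ fits the hypotheses of Corollary \ref{cor:key_corollary} at $(x,t)$. Because $(x,t) \in \Sigma$, $p \in D\G(x,t)$, and $h = h(p,t)$, the sub-differential identity gives $u(x,t) = \G(x,t) = P_{p,h}(x)$, hence $w(x,t) = 0$. Non-negativity of $w$ on $B_d \times (-1,t]$ follows from $u \geq \G$, the fact that $\G$ is non-increasing in time, and $\G(\cdot,t) \geq P_{p,h}$ on $B_d$ (by the sub-differential property). For the equation, uniform ellipticity yields $\cM^-_{\cL_0} w \leq \cM^-_{\cL_0} u - \cM^-_{\cL_0} P_{p,h}$, and Lemma \ref{lem:MP_greater_than_zero}(3), together with the universal gradient bound $|p|\leq 1/(d-1)$ controlling the critical drift $\b|p|$, gives $\cM^-_{\cL_0} P_{p,h} \geq -C'$ after choosing $d$ large. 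Thus $w_t - \cM^-_{\cL_0} w \geq -C_0$ for a universal $C_0$, and invoking Corollary \ref{cor:key_corollary} with $M = \e_0^{-1}$ (where $\e_0$ is as in Lemma \ref{tapaarriba}) produces the index $i \leq k-1$ and the measure estimate in item (1).

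For item (2), since $\G \leq u$, the measure estimate in (1) immediately holds with $\G-P_{p,h}$ in place of $u-P_{p,h}$ on $R_{r_i,\D t}(x,t)$. Corollary \ref{cor:h} gives the lower bound $\G - P_{p,h} \geq -C\D t$ on $C_{1,\cdot}(0,t+\D t)$, so $\tilde\G := \G - P_{p,h} + C\D t$ is a non-negative parabolically convex function to which Lemma \ref{tapaarriba} applies on $C_{r_i,\D t}(x,t)$, yielding $\tilde\G \leq Cr^{-(2-\s)} r_i^2$ in $C_{r_i/2,\D t/2}(x,t)$. Time-monotonicity of $\G$ then extends the bound to the future cylinder $C_{r_i/2,\D t}(x,t+\D t)$. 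A direct computation using $\D t \leq (2^{-k}r)^2$ and $i \leq k-1$ shows $C\D t \leq Cr^{-(2-\s)} r_i^2$, which absorbs the additive offset and delivers $|\G - P_{p,h}| \leq Cr^{-(2-\s)} r_i^2$ on $C_{r_i/2,\D t}(x,t+\D t)$. Item (3) then follows by a standard convexity comparison: the pointwise flatness on $B_{r_i/2}(x)$ combined with convexity of $\G(\cdot,s)$ gives $|p'-p| \leq Cr^{-(2-\s)} r_i$ for every $p' \in D\G(y,s)$ with $(y,s) \in C_{r_i/4,\D t}(x,t+\D t)$, bounding the spatial projection of $\Phi$ by a ball of volume $C(r^{-(2-\s)} r_i)^n$. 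The temporal Lipschitz bound from Lemma \ref{lem:properties_h}(3) gives that $h$ spans an interval of length $\leq C\D t$, so $|\Phi(C_{r_i/4,\D t}(x,t+\D t))| \leq C(r^{-(2-\s)} r_i)^n \D t$, and dividing by $|C_{r_i/4,\D t}(x,t+\D t)| \sim r_i^n \D t$ gives the ratio $Cr^{-(2-\s)n}$.

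The main obstacle is the bookkeeping across scales needed to keep every estimate uniform in $\s \in [1,2)$: the factor $r^{-(2-\s)}$, the dyadic annular radii $r_i = 2^{-i} r$ with $i \leq k-1$, and the height $\D t \leq (2^{-k}r)^2$ must conspire so that the offset $C\D t$ from Corollary \ref{cor:h} is dominated by the scale $Cr^{-(2-\s)} r_i^2$ appearing in the upper bound, and so that $\cM^-_{\cL_0} P_{p,h}$ retains enough lower-boundedness to absorb the critical drift for every $\s$. The geometric choice $k \sim (2-\s)^{-1}$ from the Key Lemma is precisely what forces these competing quantities into balance, both for $\s=1$ and as $\s \nearrow 2$.
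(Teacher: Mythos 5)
Your proposal is correct and follows essentially the same route as the paper: apply Corollary \ref{cor:key_corollary} to $u-P_{p,h}$ (using Lemma \ref{lem:MP_greater_than_zero} and uniform ellipticity to get the uniform right-hand side $-C_0$), pass the measure estimate to the convex function $\G-P_{p,h}$ and invoke Lemma \ref{tapaarriba} together with Corollary \ref{cor:h} for the two-sided flatness, and then bound $\Phi$ of the cylinder by a product of a ball of radius $Cr^{-(2-\s)}r_i$ in the gradient variable with an interval of length $C\D t$ in $h$. The only presentational difference is that the paper routes the sub/supersolution manipulation explicitly through Theorem \ref{thm:equation_for_the_diference} to justify it in the viscosity sense, which your appeal to ``uniform ellipticity'' implicitly requires.
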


\begin{proof}
To prove 1 we apply the Corollary \ref{cor:key_corollary} to $(u - P_{p,h})$ in $C_{2^{-k}r,\D t}(x,t)$. By Theorem \ref{thm:equation_for_the_diference} and Lemma \ref{lem:MP_greater_than_zero} we know that $u$ satisfies in viscosity in $C_{1,1}(x,0) \supseteq C_{2^{-k}r,\D t}(x,t)$,
\begin{align*}
u_t - \cM_{\cL_0}^-u &\geq -1 + \cM_{\cK_0}^-P_{p,h} - \b|p| \geq -C.
\end{align*}
This proves (1).

To prove (2) we notice first that the previous estimate for $(u - P_{p,h})$ also holds for the parabolic convex function $(\G - P_{p,h}) \leq (u - P_{p,h})$. Then by Lemma \ref{tapaarriba} we get the upper bound $\G - P_{p,h} \leq Cr^{-(2-\s)}r_i^2$ in $C_{r_i/2,\D t}(x,t+\D t/2)$. Meanwhile, the lower bound holds by the definition of the convex envelope and Corollary \ref{cor:h}.

As a consequence of the bounds given by (2) and the geometry of convex functions we get that $\diam( D\G(B_{r_i/4}(x) \times \{t+\D t\})) \leq Cr^{-(2-\s)}r_i$. Then, by Lemma \ref{lem:properties_h},
\begin{align*}
\Phi(C_{r_i/4,\D t}(x,t+\D t)) \ss & \ Cylinder,\\
:= & \ \{(p',h'):p' \in  D\G(B_{r_i/4}(x) \times \{t+\D t\}),\\
& \ h' \in [h(p',t),h(p',t) + C\D t]\}.
\end{align*}
For which it is easy to verify that 
\[
|Cylinder| \leq Cr^{-(2-\s)n}r_i^n\D t.
\]
This concludes (3) and the proof of the lemma.
\end{proof}

\begin{theorem}[Covering Lemma for the Contact Set]\label{thm:abp}
Let $k \sim (2-\s)^{-1}$ as in Lemma \ref{lem:key_lemma}, $r \in (0,1]$, $\D t \in (0,(2^{-k}r)^2]$, $t \in (-1+3\D t/2,-\D t]$ and $J = (t-\D t/2,t]$. There exists a finite family of disjoint open boxes $\{K_j\}$ such that:
\begin{enumerate}
 \item  $K_j := Q_j\times J$ with $Q_j \ss \R^n$ an open cube with $\diam(Q_j) < r$.
 \item $\overline{K_j} \cap \Sigma \neq \emptyset$ and $\Sigma \cap (\R^n \times J) \ss \bigcup_j \overline{K_j}$.
 \item \label{c} \textbf{Control of $u$ detaching from $\G$:}
 \begin{align*}
  \frac{|\{u \leq \G + C\}\cap \tilde K_j|}{|\tilde K_j|} \geq \m,
 \end{align*}
 where,
 \begin{align*}
 \tilde K_j &:= \tilde Q_j\times \tilde J,\\
 \tilde Q_j &:= 16\sqrt n Q_j,\\
 \tilde J &:= (t-3\D t/2,t].
 \end{align*}
 \item \label{b} \textbf{Control of the jacobian measure of $\Phi$:}
 \begin{align*}
  \frac{|\Phi(\bar K_j)|}{|K_j|} \leq Cr^{-(2-\s)n}.
 \end{align*}
\end{enumerate}
For some universal constants $C>0$ and $\m \in (0,1)$ independent of $\s \in [1,2)$.
\end{theorem}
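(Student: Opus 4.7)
Writing $\tau$ for the right endpoint of $J$, the plan is to run a dyadic/Vitali-type covering argument over $\Sigma \cap (\R^n \times J)$ driven by Lemma \ref{lem:covering_piece}. For each $(x, s) \in \Sigma \cap (\R^n \times J)$, that lemma produces an index $i(x, s) \in \{0, \ldots, k-1\}$, a dyadic radius $r_{i(x,s)} = 2^{-i(x,s)} r$, and a supporting plane $P_{p, h}$ with $p \in  D\G(x, s)$ satisfying the detachment, flatness and Jacobian bounds there. The goal is to extract a finite family of disjoint open boxes $K_j = Q_j \times J$ whose closures cover $\Sigma \cap (\R^n \times J)$ and which inherit conclusions (3) and (4) of the theorem.

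I would build the covering by a stopping-time decomposition: starting from a dyadic grid that contains $\bar B_1$, subdivide each cube $Q$ until either $Q$ meets none of the spatial projection of $\Sigma \cap (\R^n \times J)$ (discard), or $Q$ contains a representative $(x_j, t_j) \in \Sigma \cap (Q \times J)$ with $r_{i_j} := r_{i(x_j, t_j)} \geq 16\sqrt n \, \ell_Q$ (stop and set $Q_j := Q$). Since $r_{i(x,s)} \geq r/2^{k-1} > 0$, the recursion terminates in finitely many steps, yielding a disjoint family $\{Q_j\}$ with side lengths satisfying $\ell_{Q_j} \leq r/(16\sqrt n)$ whose closures cover the spatial projection of $\Sigma \cap (\R^n \times J)$. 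Hence $\diam Q_j \leq r/16 < r$, and the stopping rule guarantees $B_{r_{i_j}}(x_j) \subseteq \tilde Q_j := 16\sqrt n \, Q_j$. Setting $K_j := Q_j \times J$ produces conclusions (1) and (2).

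For conclusion (3), I would apply Lemma \ref{lem:covering_piece}(1) at $(x_j, t_j)$. Because $P_{p_j, h_j} \leq \G$ pointwise and $r^{-(2-\s)} r_{i_j}^2 \leq r^\s \leq 1$, the set $\{u \leq \G + C\}$ occupies at least $(1-\e_0)|R_{r_{i_j}, \D t}(x_j, t_j)|$ within $R_{r_{i_j}, \D t}(x_j, t_j)$ for a universal $C$. The inclusion $R_{r_{i_j}, \D t}(x_j, t_j) \subseteq \tilde K_j$ holds spatially because $B_{r_{i_j}}(x_j) \subseteq \tilde Q_j$, and temporally because $(t_j - \D t, t_j - \D t/2] \subseteq (\tau - 3\D t/2, \tau] = \tilde J$, using $t_j \in J = (\tau - \D t/2, \tau]$. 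A direct volume comparison yields a universal $\m \in (0, 1)$ independent of $\s$.

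For conclusion (4), the main obstacle is temporal: Lemma \ref{lem:covering_piece}(3) controls $\Phi$ on the forward cylinder $C_{r_{i_j}/4, \D t}(x_j, t_j + \D t)$, whereas $\bar K_j = \bar Q_j \times [\tau - \D t/2, \tau]$ contains times before $t_j$. I plan to bridge this by combining three ingredients: the monotonicity of the sub-differential domain in time from Lemma \ref{lem:properties_h}(1), which propagates slopes forward; the Lipschitz regularity of $h$ in time from Lemma \ref{lem:properties_h}(3), which bounds height oscillations by $O(\D t)$; and the flatness of $\G$ near $(x_j, t_j + \D t)$ from Lemma \ref{lem:covering_piece}(2), combined with the spatial convexity of $\G$, which localizes the relevant slopes to an $O(r^{-(2-\s)} r_{i_j})$-neighborhood of $p_j$. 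Embedding $\Phi(\bar K_j)$ into a product set of these diameters then yields $|\Phi(\bar K_j)| \leq C r^{-(2-\s)n} r_{i_j}^n \D t \leq C' r^{-(2-\s)n} |K_j|$, as required. This localization of the sub-differential image over the past time-slab $[\tau - \D t/2, t_j]$ is the main technical point; the remaining steps are mechanical once Lemma \ref{lem:covering_piece} and the dyadic covering are in hand.
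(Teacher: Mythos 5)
Your overall strategy---a stopping-time dyadic decomposition driven by Lemma \ref{lem:covering_piece}, reading conclusion (3) off the detachment estimate on the annular region and conclusion (4) off the Jacobian bound on the cylinder---is the same as the paper's. However, your stopping rule is calibrated in the wrong direction, and this leaves a genuine gap in conclusion (3). You stop at $Q_j$ once it contains a contact point with $r_{i_j} \geq 16\sqrt n\,\ell_{Q_j}$ (where $\ell_{Q_j}$ is the side length) and then assert that this ``guarantees $B_{r_{i_j}}(x_j) \subseteq \tilde Q_j$''. The implication goes the other way: $\tilde Q_j = 16\sqrt n\, Q_j$ has side length $16\sqrt n\,\ell_{Q_j}$, so the distance from any $x_j \in Q_j$ to $\partial \tilde Q_j$ is at most $8\sqrt n\,\ell_{Q_j} < 16\sqrt n\,\ell_{Q_j} \leq r_{i_j}$, and hence $B_{r_{i_j}}(x_j) \not\subseteq \tilde Q_j$; if $r_{i_j} \gg \ell_{Q_j}$ the annulus $R_{r_{i_j},\D t}(x_j,t_j)$, which is where Lemma \ref{lem:covering_piece}(1) gives information, is entirely disjoint from $\tilde K_j$. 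Placing $B_{r_{i_j}}(x_j)$ inside $16\sqrt n\,Q_j$ requires an \emph{upper} bound $r_{i_j} \lesssim \sqrt n\,\ell_{Q_j}$. The same two-sided comparability is used silently in your final display for conclusion (4), where $r_{i_j}^n\D t \leq C'|K_j|$ again needs $r_{i_j} \leq C\ell_{Q_j}$. Your rule as stated records only the lower bound; first-time stopping does implicitly give $r_{i_j} < 32\sqrt n\,\ell_{Q_j}$ from the failure of the parent cube, but even that is too weak for the fixed dilation $16\sqrt n$ in the statement.

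The paper avoids this by pinning the ratio exactly: it subdivides until conclusions (3) and (4) themselves hold, and proves termination within $k \sim (2-\s)^{-1}$ generations by taking any nested chain $Q_0 \supseteq \cdots \supseteq Q_{k-1}$ containing a contact point, with $\diam(Q_i) = (r/4)2^{-i}$, and invoking Lemma \ref{lem:covering_piece} to produce a level $i$ with $\diam(Q_i) = r_i/4$. With that ratio both inclusions hold simultaneously: $Q_i \subseteq B_{r_i/4}(x)$, so $K_i$ sits inside the cylinder carrying the Jacobian bound, and $B_{r_i}(x) \subseteq 16\sqrt n\, Q_i$, so $R_{r_i,\D t}(x,t) \subseteq \tilde K_i$. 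You should recalibrate your stopping threshold so that $r_{i_j}$ and $\diam(Q_j)$ are comparable with constants compatible with both inclusions (the cleanest fix is to adopt the paper's criterion). On the positive side, your replacement of $P_{p,h}$ by $\G \geq P_{p,h}$ using $r^{-(2-\s)}r_{i_j}^2 \leq 1$ is exactly the paper's step, and your identification of the temporal mismatch in conclusion (4)---the lemma controls $\Phi$ on a cylinder anchored forward of $t_j$ while $\bar K_j$ reaches back to $t-\D t/2$---is a real point that the paper resolves just as you propose, via the monotonicity and Lipschitz continuity of $h$ from Lemma \ref{lem:properties_h}.
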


\begin{proof}
Consider a covering of $B_1$ contained in $B_2$ by congruent cubes $\{Q\}$ with $\diam(Q) = r/4$. Discard every rectangle $K = Q \times J$ such that $\overline{K} \cap \Sigma = \emptyset$. Whenever $Q\times J$ does not satisfy (\ref{b}) or (\ref{c}), we split $Q$ into $2^n$ congruent cubes $\{Q'\}$ and consider now the rectangles given by $\{K' = Q'\times J\}$. We need to prove that eventually all rectangles produced by this algorithm satisfy (\ref{c}) and (\ref{b}). In fact we will show that it will finish before $k \sim (2-\s)^{-1}$ iterations.

Let $\overline{Q_0} \times J \supseteq \overline{Q_2} \times J \supseteq \ldots \supseteq \overline{Q_{k-1}} \times J \ni (x_0,t_0)$ such that $(x,t) \in \Sigma$ and $\diam(Q_i) = (r/4)2^{-i}$. Let also $p\in\p\G(x,t)$ and $h = h(p,t)$. From Lemma \ref{lem:covering_piece} there exists some non negative integer $i \leq k$, such that for $r_i = 2^{-i}r$,
\begin{align}
\label{eq:abp1} &\frac{|\{u > P_{p,h} + Cr^{-(2-\s)}r_i^2\} \cap R_{r_i,\D t}(x,t)|}{|R_{r_i,\D t}(x,t)|} \leq \e_0,
\end{align}
and
\begin{align}
\label{eq:abp2} &\frac{|\Phi(C_{r_i/4,\D t}(x,t+\D t/2))|}{|C_{r_i/4,\D t}(x,t+\D t/2)|} \leq Cr^{-(2-\s)n}.
\end{align}
Now,
\begin{align*}
\diam(Q_i) = r_i/4 \Rightarrow \begin{cases}
K_i \ss C_{r_i/4,\D t}(x,t+\D t/2),\\
R_{r_i,\D t}(x,t) \ss \tilde K_i.
\end{cases}
\end{align*}
Then (\ref{c}) and (\ref{b}) follow from \eqref{eq:abp1}, \eqref{eq:abp2}, the previous inclusions and the fact that we can replace $P_{p,h}$ by $\G \geq P_{p,h}$ in \eqref{eq:abp1}.
\end{proof}

\section{Point Estimate}\label{section:Point Estimate}

In this section we prove Point Estimate as the one in \cite{Davila12-p} for operators with non-symmetric kernels. In the classical case one can use that, at sufficiently small scales, the drift becomes so small that it can be absorbed by the estimates that can be proved for pure second order equations. In our case this is no longer necessarily true and provide us with new challenges. 

\begin{theorem}\label{thm:PE}
Let $\s \in [1,2)$. Suppose $u \geq 0$ satisfies 
\begin{align*}
u_t - \cM^-_{\cL_0}u &\geq -1 \text{ in $C_{2,2}(0,1)$},\\
\inf_{C_{1,1}(0,1)}u &\leq 1.
\end{align*}
Then, for every $s \geq 0$, 
\begin{align*}
|\{u > s\} \cap C_{1,1}| \leq Cs^{-\e},
\end{align*}
for some constants $\e$, $C$ depending only on $n, \l, \L$ and $\b$.
\end{theorem}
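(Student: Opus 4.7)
The plan is to follow the classical Caffarelli--Silvestre $L^\e$ strategy, using the ABP-type covering of Theorem~\ref{thm:abp} as the main quantitative input and iterating via a parabolic Calder\'on--Zygmund stacked-cubes argument of Krylov--Safonov--Wang type.

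The first step is a diminish-of-measure statement: there exist universal $M > 1$ and $\m \in (0,1)$ such that whenever $u \geq 0$ satisfies the hypotheses of the theorem, one has
\[
|\{u \leq M\} \cap C_{1,1}| \geq \m\, |C_{1,1}|.
\]
To prove this I would apply Theorem~\ref{thm:abp} to $u$ on a slightly enlarged cylinder. The hypothesis $\inf_{C_{1,1}(0,1)} u \leq 1$ together with $u \geq 0$ on $\R^n$ (which we may assume after replacing $u$ by $\min(u,1)$ on $\p_p C_{2,2}(0,1)$) forces the parabolic convex envelope $\G$ of $-u^-$, as in Section~\ref{section:Alexandroff-Bakelman-Pucci type of estimate}, to have subdifferential-image $\Phi(\Sigma)$ of measure bounded below by a universal constant $c_0 > 0$: this is the standard ABP geometric input, coming from the fact that the graph of $\G$ must lie below a cone of universal aperture over $B_1$. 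Combining this lower bound with property~(4) of Theorem~\ref{thm:abp}, $|\Phi(\bar K_j)| \leq Cr^{-(2-\s)n}|K_j|$, and fixing $r$ universal, yields $\sum_j |K_j| \geq c$; property~(3) of the same theorem then provides a positive fraction of points in $\bigcup_j \tilde K_j \subset C_{1,1}$ at which $u \leq \G + C \leq C$, which establishes the claim.

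The second step is to iterate. Writing $A_s := \{u > s\} \cap C_{1,1}$, the goal is to show $|A_{M^k}| \leq (1-\m)^k\, |C_{1,1}|$, which yields $|A_s| \leq Cs^{-\e}$ with $\e = -\log(1-\m)/\log M$. The iteration uses a parabolic stacked-cubes covering (as in Wang~\cite{Wang92}): for every dyadic parabolic sub-box $K' \subset C_{1,1}$ on which the density of $A_{M^{k+1}}$ exceeds $1-\m$, the contrapositive of the diminish-of-measure applied to the rescaling
\[
\tilde u(x,t) := M^{-k}\,u(rx + x_0,\, r^\s t + t_0)
\]
forces its stacked backward predecessor to lie inside $A_{M^k}$, at which point a Vitali-type covering of $A_{M^{k+1}}$ by such boxes closes the induction.

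The main obstacle is keeping the class of equations unchanged under all these rescalings, since the drift is critical: under $\s\to \s$, $\s\geq 1$, the rescaled kernel acquires a gradient correction that need not go to zero. This is precisely where the scale invariance of $\cL_0^\s(\l,\L,\b)$ (established in Section~\ref{section:Preliminaries}) is essential: $\tilde u$ above satisfies $\tilde u_t - \cM^-_{\cL_0}\tilde u \geq -r^\s M^{-k} \geq -1$ with respect to the same family, so the diminish-of-measure can be reapplied uniformly at every scale and the constants in the iteration stay universal. A secondary bookkeeping point is choosing the enlargement factor between $C_{1,1}$ and $C_{2,2}(0,1)$, together with the inner translation in time in the hypothesis $\inf_{C_{1,1}(0,1)} u \leq 1$, so that the backward stacked cylinders required at each iteration continue to lie inside the domain where the equation is known to hold.
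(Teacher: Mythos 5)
Your overall architecture (a base ``diminish of measure'' estimate obtained from the ABP covering, followed by a parabolic Calder\'on--Zygmund iteration with stacked boxes in the spirit of Wang) is the same as the paper's, and your second step, including the observation that scale invariance of $\cL_0^\s(\l,\L,\b)$ keeps the class of equations closed under the rescalings, matches the paper's Steps 2--4. The genuine gap is in your first step, which is precisely where all the difficulty caused by the critical drift lives. You cannot apply Theorem~\ref{thm:abp} ``to $u$ on a slightly enlarged cylinder'': the hypotheses of the ABP setup require a function that is nonnegative on the parabolic boundary of the box where the estimate is sought and attains a negative value inside, whereas for $u$ itself you only know $\inf u\leq 1$ on $C_{1,1}(0,1)$, a cylinder lying in the \emph{future} of $C_{1,1}$ (truncating by $\min(u,1)$ on the boundary pushes in the wrong direction and in any case perturbs the non-local terms). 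The paper first subtracts a fundamental-solution--type special function (Lemma~\ref{lem:barrier_pt_est}) to transport the smallness of $u$ from the future box back to a function $v$ with a right-hand side concentrated in $C_{1/8,1}$; this is not the ``secondary bookkeeping'' you describe but an explicit barrier construction. Second, and this is the point you miss entirely, after that subtraction the negative set of $v$ is no longer confined to $Q_1$, and because the drift is critical it cannot be rescaled away; a second barrier (Lemma~\ref{lemma:Barrier2}) is needed to force $v\geq 0$ outside $B_{1/4}$, so that the contact set, and hence the good boxes $\tilde K_j$ produced by Theorem~\ref{thm:abp}, actually land inside $K_{1,1}$. Without this localization your conclusion ``$\bigcup_j\tilde K_j\subset C_{1,1}$'' is unjustified and the density estimate could occur anywhere in $B_{2\sqrt n}$.

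A further structural point: both barriers are built by evaluating second-order Pucci operators on explicit profiles and invoking Proposition~\ref{pro:Pucci_sigma_to_two}, so they only work for $\s$ sufficiently close to $2$. The paper therefore splits the range of $\s$: for $\s$ bounded away from $2$ the Weak Point Estimate (Corollary~\ref{cor:weak_point_estimate}) already yields Theorem~\ref{thm:PE} with constants that are uniform on that compact subinterval, and the barrier-based argument is reserved for $\s$ near $2$. Your proposal makes no such case distinction, and as written the base step has no mechanism that works uniformly in $\s\in[1,2)$. To repair the proof you would need to reproduce (or replace) Lemmas~\ref{lem:barrier_pt_est} and~\ref{lemma:Barrier2} and the $\s$-splitting before your iteration scheme can start.
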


Corollary \ref{cor:weak_point_estimate} already tells us that the result holds if we are willing to allow constants that degenerate as $\s$ goes to two. Therefore we can restrict the analysis for values of $\s$ close to two where we can construct special functions based on how the second order operator evaluates on them.

In the case we have a right hand side $-f(t) \in L^1((-1,0])$ we can apply the previous Theorem to $\tilde u = u + \int_{-1}^t f^+(s)ds$ and recover the estimate in $L^1_t$.

\begin{corollary}
Let $\s \in [1,2)$. Suppose $u \geq 0$ satisfies 
\begin{align*}
u_t - \cM^-_{\cL_0}u &\geq -f(t) \text{ in $C_{2r,2r^\s}(0,r^\s)$}.
\end{align*}
Then, for every $s \geq 0$, 
\begin{align*}
\frac{|\{u > s\} \cap C_{r,r^\s}|}{|C_{r,r^\s}|} \leq C\1\inf_{C_{r,r^\s}(0,r^\s)}u + \int_{-r^\s}^{r^\s}f^+(t)dt\2^{\e}s^{-\e},
\end{align*}
for some constants $\e$, $C$ depending only on $n, \l, \L$ and $\b$.
\end{corollary}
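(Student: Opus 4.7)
My plan is to follow the classical Krylov--Safonov / Caffarelli--Silvestre strategy: derive a first-level decay estimate from the ABP-type covering (Theorem~\ref{thm:abp}), and then iterate it by a parabolic stacked-cube argument to get the power decay of level sets.

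The comment after the statement already supplies half the reduction: Corollary~\ref{cor:weak_point_estimate} yields constants that only degenerate as $\s\nearrow 2$, so I may restrict to $\s\in[\s_0,2)$ for a universal $\s_0$ close to $2$. In this regime Proposition~\ref{pro:Pucci_sigma_to_two} forces the non-local extremal operators to behave like classical Pucci operators, and the special barriers needed at the ABP step can be built uniformly. For $\s\in[1,\s_0)$ we are done.

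The core step is the first-level decay: there exist universal $M>1$ and $\mu\in(0,1)$ such that for any $u$ satisfying the hypotheses,
\[
|\{u\le M\}\cap C_{1,1}|\ \ge\ \mu\,|C_{1,1}|.
\]
To obtain this I would apply the ABP framework of Section~\ref{section:Alexandroff-Bakelman-Pucci type of estimate} to $v=M-u$ on a slightly enlarged cylinder (chosen so that $v\ge 1$ on the parabolic boundary while $\inf v \in (0,1]$ at an interior minimum, using $\inf_{C_{1,1}(0,1)}u\le 1$). Theorem~\ref{thm:abp}, at scale $r=1$ and with $J$ chosen as a slice of $C_{1,1}$, then produces a finite family of boxes $\{K_j\}$ covering $\Sigma\cap(\R^n\times J)$ with both the Jacobian bound~(4) and the detachment property~(3). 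Because the Legendre image $\Phi(\overline{\bigcup K_j})$ must cover a universal neighborhood of $(0,0)$ in $(p,h)$-space (Lemma~\ref{lem:MP_greater_than_zero} applied at the interior minimum), the Jacobian bound forces $\sum|K_j|\gtrsim 1$; item~(3) then yields a universal-proportion subset of $\bigcup\tilde K_j\subset C_{1,1}$ on which $v\le \G+C$, i.e.\ $u\le M-C$, which is the claim after relabeling.

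Once this first decay is in hand, the second step is the parabolic Calder\'on--Zygmund / growing-ink-spots iteration as in \cite{Davila12-p,Wang92}: using the scale invariance of $\cL_0$ and the rescaling $\tilde u(x,t)=r^{-\s}u(rx,r^\s t)$ from Section~\ref{section:Preliminaries}, the first-level decay applies in every dyadic sub-cylinder where $\inf u$ is under control, and stacking back in time produces
\[
|\{u>M^k\}\cap C_{1,1}|\ \le\ (1-\mu)^k,
\]
equivalent to $|\{u>s\}\cap C_{1,1}|\le Cs^{-\e}$ with $\e=\log(1-\mu)^{-1}/\log M$. The main obstacle I expect is controlling the critical drift $\b|p|$ throughout the first step, particularly in the barrier portion of Lemma~\ref{lem:MP_greater_than_zero} and inside Lemma~\ref{lem:covering_piece}: at $\s=1$ the drift is comparable to the diffusion and must be absorbed by choosing the ABP parameter $d$ universal and large, independently of $\s$. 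A secondary technical point is matching the time-shift between $K_j$ and $\tilde K_j$ in Theorem~\ref{thm:abp} when doing the parabolic stacking, which forces careful bookkeeping of the temporal scale $\D t\sim r^\s$ at each iteration.
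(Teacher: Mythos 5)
Your proposal is an outline of the proof of Theorem~\ref{thm:PE} itself, not of the corollary. The corollary's entire content is the reduction to that theorem, and that reduction is missing: you never explain how the right-hand side $f(t)$, the general radius $r$, or the normalizing factor $\bigl(\inf_{C_{r,r^\s}(0,r^\s)}u+\int_{-r^\s}^{r^\s}f^+\bigr)^{\e}$ enter the estimate. The paper's argument is the one-line remark preceding the statement: set $\tilde u := u+\int_{-r^\s}^{t}f^+(s)\,ds \geq u$, so that $\tilde u_t-\cM^-_{\cL_0}\tilde u\geq f^-\geq 0\geq -1$; rescale by $(x,t)\mapsto(rx,r^\s t)$ using the scale invariance of $\cL_0$; divide by $N:=\inf_{C_{r,r^\s}(0,r^\s)}u+\int_{-r^\s}^{r^\s}f^+$ so that the rescaled function has infimum at most $1$ and satisfies the hypotheses of Theorem~\ref{thm:PE}; and conclude from $\{u>s\}\subset\{\tilde u>s\}$ that $|\{u>s\}\cap C_{r,r^\s}|\leq CN^{\e}s^{-\e}|C_{r,r^\s}|$. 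None of this appears in your write-up, and your final displayed estimate $|\{u>M^k\}\cap C_{1,1}|\leq(1-\mu)^k$ is the conclusion of the theorem, not of the corollary.

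A secondary problem sits inside your sketch of the first-level decay, should you actually need to reprove the theorem. You propose to run the ABP covering on $v=M-u$ with ``$v\ge 1$ on the parabolic boundary and $\inf v\in(0,1]$''; but $u$ is only assumed nonnegative with small infimum somewhere, so nothing bounds $u$ from above on the boundary and these normalizations cannot be arranged. Moreover $v=M-u$ satisfies $v_t-\cM^+_{\cL_0}v\le 1$, i.e.\ it is a subsolution for $\cM^+_{\cL_0}$, whereas Theorem~\ref{thm:abp} is built for supersolutions of $\cM^-_{\cL_0}$ touched from below by their parabolic convex envelope. The paper instead applies the ABP machinery to $u$ itself after subtracting the special function of Lemma~\ref{lem:barrier_pt_est} and invoking the localization barrier of Lemma~\ref{lemma:Barrier2} (see Lemma~\ref{lem:base}); this is precisely what produces a function that is nonnegative on $\p_p C_{1/4,1}$ with a universally sized negative infimum, and it is also where the critical drift is absorbed. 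For the corollary as stated, however, all of this is unnecessary: quote Theorem~\ref{thm:PE} and perform the substitution, rescaling, and normalization described above.
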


The proof of the Point Estimate, Theorem \ref{thm:PE}, follows some of the steps as in Section 5 from \cite{Davila12-p}. In general lines, the estimate is proven inductively over level sets of the form $M^k$ with $M>1$ universal and $k=1,2,\ldots$. The main difficulty related with the drift is contained in the first step and taken care by a barrier described by Lemma \ref{lemma:Barrier2}.

\textbf{Step 1:} We get the estimate for $k=1$ by using two auxiliary functions before applying the previous ABP type of estimate. The first lemma will be used to concentrate the positive part of the right-hand side of the equation inside of $C_{1/8,1}$. However, if we consider the gradient term as a right-hand side this still spreads its positive part all over the domain. To manage this we use a second auxiliary function which controls the growth of the solution from below starting from $B_{1/8}$. After a vertical translation we get that the negative part of the solution, and its contact set, get inside $B_{1/4}$.

By a standard rescaling and covering procedure we can rewrite the main goal of this step in the following way. Recall that $K_{r,\t}(x,t) = Q_r(x) \times (t-\t,t]$ where $Q_r(x)$ is the spatial cube of side $r$ centered at $x \in \R^n$. Whenever $(x,t)$ is omitted it is assumed that the box is centered at the origin.

\begin{lemma}[Base configuration] \label{lem:base}
Let $\s$ as in Lemma \ref{lem:barrier_pt_est} and suppose $u \geq 0$ satisfies,
\begin{alignat*}{2}
u_t-\cM^-_{\cL_0}u &\geq -1 &&\text{ in $C_{2\sqrt n,37}(0,36)$},\\
\inf_{K_{3,36}(0,36)}u &\leq 1.
\end{alignat*}
Then
\begin{align*}
\left|\3u \geq M_1\4\cap K_{1,1}\right| \leq \m_1|K_{1,1}|,
\end{align*}
for some universal constants $\m_1\in(0,1)$, $M_1>1$ independent of $\s$.
\end{lemma}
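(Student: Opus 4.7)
The strategy is to apply the parabolic ABP covering of Theorem \ref{thm:abp} to the parabolic convex envelope of a suitably modified function $w$, and extract the desired measure bound from the Jacobian estimate (\ref{b}) together with the detachment control (\ref{c}). The modification is engineered so that $w$ has a universally large negative drop, its non-positive set is concentrated inside a small fixed region, and the effective right-hand side of the equation it satisfies is essentially bounded and supported in $C_{1/8,1}$.

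I would first introduce the barrier $\psi_1$ from Lemma \ref{lem:barrier_pt_est}, which satisfies $\psi_1\leq -M_0$ in $K_{1,1}$, $\psi_1\geq 0$ outside $C_{2\sqrt n,37}(0,36)$, and $\psi_{1,t}-\cM^+_{\cL_0}\psi_1\geq -C\chi_{C_{1/8,1}}$. Setting $v:=u+\psi_1$, uniform ellipticity yields
\[
v_t-\cM^-_{\cL_0}v\geq -1 - C\chi_{C_{1/8,1}},
\]
while the hypothesis on $\inf u$ forces $\inf v\leq 1-M_0<0$ at some point of $K_{3,36}(0,36)$ and $v\geq 0$ outside $K_{1,1}$. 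If the extremal operator were $\cM^-_{\cK_0}$ (no drift) the ABP machinery could be closed here, since testing the equation against the parabolic convex envelope $\G$ of $-v^-$ at a contact point picks up only the right-hand side, concentrated in $C_{1/8,1}$. The critical drift contribution $\b|D\G|$ hidden in $\cM^-_{\cL_0}$, however, spreads an extra positive forcing throughout the relevant region, which breaks the concentration.

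To absorb this drift I would add a second auxiliary function $\psi_2$ provided by Lemma \ref{lemma:Barrier2}, which gives a linear-like growth from below starting at $B_{1/8}$. After forming $w:=v+\psi_2$ and a vertical translation, the contact set $\Sigma$ of the parabolic convex envelope of $-w^-$ is contained in $B_{1/4}\times(-1,0]$, and the subdifferentials $D\G$ at contact points are uniformly bounded, which reduces the term $\b|D\G|$ to a bounded right-hand side depending only on $t$. Invoking Theorem \ref{thm:abp} at a universal scale $r$ with $\D t$ accordingly, one obtains a finite covering $\{K_j\}$ of $\Sigma$ satisfying (\ref{b}) and (\ref{c}). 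A universal lower bound $|\Phi(\Sigma)|\geq c>0$, coming from the drop of $\G$ over a fixed parabolic cube, combined with summation of (\ref{b}) over $j$, produces $\sum_j |\tilde K_j|\geq c'$; on each $\tilde K_j$ the detachment estimate (\ref{c}) yields a positive fraction of points at which $u\leq \G -\psi_1-\psi_2+C\leq M_1$ for some universal $M_1$. A standard Vitali covering argument then transfers this into $|\{u\geq M_1\}\cap K_{1,1}|\leq \m_1|K_{1,1}|$ with universal $\m_1<1$.

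The main obstacle is the construction of $\psi_2$ in Lemma \ref{lemma:Barrier2} with the right balance of properties: it must offset the non-local drift pointwise and force the subdifferentials of $\G$ to stay bounded, while only modifying the geometry of $w$ inside the target region and preserving the sign of $w$ outside $B_{1/4}$. This is the genuinely new ingredient in the critical drift setting, absent from the symmetric-kernel treatment in \cite{Davila12-p}.
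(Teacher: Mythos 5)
Your overall strategy coincides with the paper's: the special function of Lemma \ref{lem:barrier_pt_est} concentrates the forcing in $C_{1/8,1}$ and produces a universal drop, the barrier of Lemma \ref{lemma:Barrier2} localizes the negative set inside $B_{1/4}$ after a vertical translation, and the conclusion follows from Theorem \ref{thm:abp} together with Tso's lower bound on $|\Phi|$ over the contact set. However, one step as written would undo the work of the first barrier. You form $w := v + \psi_2$; since $\cM^-_{\cL_0}(v+\psi_2) \leq \cM^-_{\cL_0}v + \cM^+_{\cL_0}\psi_2$, the supersolution inequality for $w$ acquires the extra right-hand side $-\cM^+_{\cL_0}\psi_2$, which is spread over the whole annulus $B_{2\sqrt n}\sm B_{1/8}$ (Lemma \ref{lemma:Barrier2} only bounds $\cM^-_{\cL_0}\psi_2$ from below there; it gives no upper bound on $\cM^+_{\cL_0}\psi_2$). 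This re-smears exactly the forcing that $\psi_1$ was built to concentrate. The paper instead uses $\psi_2$ purely as a comparison function from below, $\psi_2 \leq v$, which leaves the equation for $v$ untouched; combined with the vertical shift by $(16\sqrt n)^{-\a}$, this is what places the negative part of $v$, and hence the contact set of its convex envelope, inside $B_{1/4}$. Relatedly, the uniform bound on the subdifferentials and the absorption of the drift term $\b|p|$ at contact points do not come from $\psi_2$: they come from Lemma \ref{lem:MP_greater_than_zero}, i.e.\ from taking the envelope supported in a large ball $B_d$ so that $|D\G|\leq 1/(d-1)$, and they are already built into Theorem \ref{thm:abp}.

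A second, smaller point: the hypothesis is $\inf_{K_{3,36}(0,36)} u \leq 1$, so your $\psi_1$ must be uniformly very negative on all of $K_{3,36}(0,36)$ (the paper's $\varphi$ satisfies $\varphi \geq 2$ there), not merely on $K_{1,1}$; otherwise $\inf(u+\psi_1)$ need not be negative and the ABP argument has no contact set to work with. With $\psi_2$ used as a lower barrier rather than an additive correction, and $\psi_1$ normalized on the correct box, your argument matches the paper's proof.
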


\begin{lemma}[Special Function]\label{lem:barrier_pt_est}
There exists a smooth function $\varphi$ such that,
\begin{alignat*}{2}
\varphi_{t}-\cM^-_{\cL_0}\varphi &\leq -1 &&\text{ in $C_{2\sqrt n,37}(0,36) \sm C_{1/8,1}$},\\
\varphi &\leq 0 &&\text{ in $\p_pC_{2\sqrt n,37}(0,36)$},\\
\varphi &\geq 2 &&\text{ in $K_{3,36}(0,36)$},
\end{alignat*}
provided that $\s$ is sufficiently close to two.
\end{lemma}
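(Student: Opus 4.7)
The overall strategy is to reduce the nonlocal inequality to a purely classical second-order problem, exploiting the assumption that $\s$ is close to $2$. If $\varphi$ is chosen smooth with $C^2$ norm and tail controlled uniformly in $\s$, then combining the identity $\cM^-_{\cL_0}\varphi \geq \cM^-_{\cK_0}\varphi - \b|D\varphi|$ with Proposition \ref{pro:Pucci_sigma_to_two} applied to $\varphi$ yields
\[
\cM^-_{\cL_0}\varphi(x,t) \geq \int_{\p B_1}\bigl((\theta^{T} D^2\varphi\,\theta)^{+}\l - (\theta^{T} D^2\varphi\,\theta)^{-}\L\bigr)\,d\theta - \b|D\varphi(x,t)| - \w_\varphi(\s),
\]
with $\w_\varphi(\s) \to 0$ uniformly on the compact cylinder as $\s \nearrow 2$. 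Bounding the spherical integral below by the classical Pucci minimum $\mathcal{P}^-(M) := \inf_{A\in[\l/C,C\L]}\trace(AM)$ (second display in Proposition \ref{pro:Pucci_sigma_to_two}), it suffices to produce a smooth $\varphi$ satisfying the stronger classical inequality $\varphi_t - \mathcal{P}^-(D^2\varphi) + \b|D\varphi| \leq -2$ on $C_{2\sqrt n,37}(0,36)\sm C_{1/8,1}$, together with $\varphi \leq 0$ on $\p_pC_{2\sqrt n,37}(0,36)$ and $\varphi \geq 3$ on $K_{3,36}(0,36)$. For $\s$ close enough to $2$ the error $\w_\varphi(\s)$ is absorbed and the stated $\leq -1$ inequality follows.

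I would then fix a nonnegative smooth bump $\chi \in C^\infty_c(B_{1/16}\times(-3/4,-1/4))$ with $\chi \geq 1$ on a concentric subset, a constant $M_0 > 0$ to be chosen, and take $\varphi$ as the $C^{2,\a}_xC^{1,\a}_t$ solution of the classical Dirichlet problem
\begin{align*}
\varphi_t - \mathcal{P}^-(D^2\varphi) + \b|D\varphi| &= M_0\chi(x,t) - 2 &&\text{in }B_{2\sqrt n}\times(-1,36],\\
\varphi &= 0 &&\text{on }\p_pC_{2\sqrt n,37}(0,36),
\end{align*}
existence and regularity coming from the Evans--Krylov theory for concave fully nonlinear parabolic operators (cf.\ \cite{Wang92}); a small parabolic mollification at the end upgrades $\varphi$ to $C^\infty$ without disturbing the estimates. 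The boundary condition is built in, and since $\mathrm{supp}(\chi)$ lies strictly inside $C_{1/8,1}$, the equation reads $\varphi_t - \mathcal{P}^-(D^2\varphi) + \b|D\varphi| = -2$ on $C_{2\sqrt n,37}(0,36)\sm C_{1/8,1}$, delivering the required classical subsolution inequality.

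The nontrivial verification is the lower bound $\varphi \geq 3$ on $K_{3,36}(0,36)$. Set $\tilde\varphi := \varphi + 2(t+1)$; adding a function of $t$ alone leaves both $D^2\varphi$ and $D\varphi$ invariant, so $\tilde\varphi$ satisfies $\tilde\varphi_t - \mathcal{P}^-(D^2\tilde\varphi) + \b|D\tilde\varphi| = M_0\chi \geq 0$ with $\tilde\varphi \geq 0$ on $\p_p$. The classical comparison principle gives $\tilde\varphi \geq 0$, and a Duhamel-type estimate against the positive forcing yields $\tilde\varphi(\cdot,-1/4) \geq cM_0$ on a small ball around the origin for a universal $c$. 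The classical parabolic Krylov--Safonov Harnack inequality for nondivergence-form uniformly parabolic equations (the $\b|D\tilde\varphi|$ drift being a bounded first-order perturbation) then propagates this positivity through the cylinder, producing $\tilde\varphi \geq c_0 M_0$ on $K_{3,36}(0,36)$ for a universal $c_0>0$. Choosing $M_0 \geq 77/c_0$ forces $\varphi = \tilde\varphi - 2(t+1) \geq 77-74 = 3$ there.

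The most delicate step is the last propagation; it must rely only on the classical second-order Krylov--Safonov theory, not on the nonlocal Harnack that is being developed later in the paper. This is admissible precisely because the problem has been reduced to a local Pucci setting before the construction begins, a reduction that is valid exactly when $\s$ is sufficiently close to $2$.
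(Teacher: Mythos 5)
Your overall reduction coincides with the paper's: both arguments build a function adapted to the limiting second-order extremal operator and then use Proposition \ref{pro:Pucci_sigma_to_two} (uniform convergence of $\cM^-_{\cK_0}$ on a fixed smooth, integrable function) to absorb the error once $\s$ is close to two --- this is precisely the paper's Step 3. Where you genuinely diverge is in how the classical special function is produced. The paper writes it down explicitly: a self-similar Gaussian $\varphi_1(y,s)=(s+1)^{-\a^3}\exp(-(\a/2)|y|^2(s+1)^{-1})$, whose derivatives are computed by hand so that the dominant negative term $-\a^{3/2}(s+1)^{-(\a^3+1)}\Phi$ beats the drift, the Pucci term, and --- crucially --- the product-rule errors created by the cutoff $\psi$ enforcing the zero boundary values. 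You instead solve a Dirichlet problem with a forcing bump supported inside $C_{1/8,1}$ and propagate positivity by the classical Krylov--Safonov/Wang weak Harnack inequality. That route is legitimate in principle (it is the standard ``abstract'' derivation of a special function from the weak Harnack inequality for supersolutions of $u_t-\mathcal{P}^-(D^2u)+\b|Du|\geq 0$), and it trades explicit computation for heavier second-order machinery.

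Two steps, however, are asserted rather than proved. First, the ``Duhamel-type estimate'' giving $\tilde\varphi(\cdot,-1/4)\geq cM_0$ on a small ball: Duhamel's formula is not available for a fully nonlinear operator, and the honest justification is comparison with an explicit subsolution concentrated near the bump (or the growth lemma of Krylov--Safonov theory) --- i.e.\ you must still construct a small explicit barrier, which is essentially the task being delegated. Second, and more seriously for the nonlocal step, your $\varphi$ lives on $B_{2\sqrt n}\times(-1,36]$ and must be extended (by zero, say) to all of $\R^n$ before $\cM^-_{\cL_0}\varphi$ is even defined; the extension is only Lipschitz across the lateral boundary $\p B_{2\sqrt n}$, so the convergence $\cM^-_{\cK_0}\varphi\to$ (second-order operator) is not uniform up to that boundary, while the inequality is required on all of $C_{2\sqrt n,37}(0,36)\sm C_{1/8,1}$, including points arbitrarily close to $\p B_{2\sqrt n}$. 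A posterior mollification does not repair this for free, since the interior Evans--Krylov estimates you invoke degenerate at the lateral boundary. This is exactly the difficulty the paper's Step 2 is designed to handle: the cutoff error terms involving $D\psi$ and $D^2\psi$ are absorbed by the explicit good term coming from the Gaussian. To close your argument you would need an analogous absorption mechanism (for instance, solving on a slightly larger cylinder with right-hand side $-3$ and cutting off smoothly while dominating the resulting errors), and that does not come automatically with the abstract construction.
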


\begin{proof}
The Lemma says that $\varphi$ should behave similarly to the fundamental solution in the sense that it allows $\varphi$ to grow a fix amount at $K_{3,36}(0,36)$ by concentrating the positive part of its right hand side in the intermediate region $C_{1/8,1}$. This is our initial Ansatz,
\begin{align*}
\varphi_1(y,s) &:= (s+1)^{-\a^3}\Phi(y(s+1)^{-1/2}),\\
\Phi(z) &:= \exp(-(\a/2)|z|^2).
\end{align*}

\textbf{Step 1:} $(\varphi_1)_t + \b|D\varphi_1| - \inf_{A \in [\l',\L']}\trace(AD^2\varphi_1) \leq -\a^{3/2}(s+1)^{-(\a^3+1)}\Phi$ in $C_{2\sqrt n,37}(0,36) \sm C_{1/8,1}$ provided that $\a$ is sufficiently large. Note now,
\begin{align*}
(\varphi_1)_t &= \a (s+1)^{-(\a^3+1)}(-\a^2 + (1/2)|z|^2)\Phi,\\
|D\varphi_1| &= \a (s+1)^{-(\a^3+1/2)}|z|\Phi,\\
D^2\varphi_1 &= \a (s+1)^{-(\a^3+1)}(\a z\otimes z - Id)\Phi.
\end{align*}
For $(y,s) \in C_{1,1} \sm C_{1/8,1}$ and $\a$ sufficiently large,
\begin{align*}
\inf_{A \in [\l',\L']}\trace(AD^2\varphi_1) &= \a(s+1)^{-(\a^3+1)}(\l'(\a|z|^2-1) - \L'(n-1))\Phi,\\
&\geq \frac{\l'\a^2}{128}(s+1)^{-(\a^3+1)}\Phi.
\end{align*}
We note that this term controls all the other terms in the operator in $C_{1,1} \sm C_{1/8,1}$.

In $C_{2\sqrt n,36}(0,36)$ we can not use anymore that $y$ is away from zero. We use instead the ``good'' term coming from $(\varphi_1)_t$, the fact that $(s+1) \sim 1$ and $|z| \leq 2 \sqrt n$. It is not difficult to see that the leading order will be $-\a^3 (s+1)^{-(\a^3+1)}$ for $\a$ is large.

\textbf{Step 2:} Let $\psi$ be a smooth function such that $\psi = 0$ in $\p_pC_{2\sqrt n,37}(0,36) \cup (\R^n \times [-1,-1/2])$ and $\psi = 1$ in $K_{3,36}(0,36)$. We consider now $\varphi_2 := \psi\varphi_1$, by the previous step,
\begin{align*}
(\varphi_2)_t + \b|D\varphi_2| - \inf_{A \in [\l',\L']}\trace(AD^2\varphi_2) &\leq \psi(\varphi_t + \b|D\varphi| - \inf_{A \in [\l',\L']}\trace(AD^2\varphi))\\
&{} + \varphi(\psi_t + \b|D\psi| - \inf_{A \in [\l',\L']}\trace(AD^2\psi))\\
&{} + 2\inf_{A \in [\l',\L']}\trace(A D\psi \otimes D\varphi),\\
&\leq C(-\a^{3/2}(s+1)^{-1} + 1 + \a (s+1)^{-1})\\
&(s+1)^{-\a^3}\Phi.
\end{align*}
Taking $\a$ sufficiently large guarantees that the right hand side above is non positive in $C_{2\sqrt n,37}(0,36) \sm C_{1/8,1}$.

\textbf{Step 3:} Let
\begin{align*}
\varphi(y,s) := C\1\varphi_2(y,s) - (1/100)\1\inf_{K_{3,36}(0,36)} \varphi_2\2(s+1)\2.
\end{align*}
We choose $C$ sufficiently large so that
\begin{alignat*}{2}
\varphi_t + \b|D\varphi| - \inf_{A \in [\l',\L']}\trace(AD^2\varphi) &\leq -2 &&\text{ in $C_{2\sqrt n,37}(0,36) \sm C_{1/8,1}$},\\
\varphi &\geq 2 &&\text{ in $K_{3,36}(0,36)$}.
\end{alignat*}
Notice now that in the closure of $C_{2\sqrt n,37}(0,36) \sm C_{1/8,1}$, $\varphi_t-\cM^-_{\cL_0}\varphi$ goes uniformly below $-1$, as $\s$ goes to two. This is how we choose $\s$ close to 2 in order to conclude the Lemma.
\end{proof}


Let $u$ satisfy the hypotheses of Lemma \ref{lem:base}. By considering,
\begin{align*}
v = u-\varphi-\inf (u-\varphi) - 1
\end{align*}
we obtain that $v$ satisfies an equation with right hand side concentrated in $C_{1/8,1}$. However, this property gets lost if we move the gradient term to the right hand side. This is why we consider the following lemma in order to localize the region where to apply the ABP estimate around $C_{1/8,1}$.

By proving an estimate for the distribution of $v$ given above, we get an estimate for the distribution of $u$ given that $\varphi$ is universal. The following will be the hypothesis of the next lemma, for some $C_0>0$ universal coming from the construction of $\varphi$, we have:
\begin{align*}
v_t - \cM^-_{\cL_0}v &\geq -C_0\chi_{B_{1/8}} \text{ in $C_{2\sqrt n,1}$},\\
v &\geq 0 \text{ in $\p_p C_{2\sqrt n,1}$},\\
\sup v^- &= 1.
\end{align*}
Notice that for $t>0$, $v_t - \cM^-_{\cL_0}v  \geq 0$, which implies that the infimum of $u$ is attained for some $t \in (-1,0]$.

\begin{lemma}\label{lemma:Barrier2}
Given $\s$ sufficiently close to two and $v$ satisfying the previous hypotheses, there is some $\a>2$ sufficiently large and independent of $\s$ such that,
\begin{align*}
\psi(x) := \1\1\frac{(|x|-1/8)^+}{2\sqrt n}\2^\a-1\2\chi_{B_{2\sqrt n}}(x) \leq v(x,t).
\end{align*}
\end{lemma}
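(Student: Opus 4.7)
The plan is to apply the Comparison Principle (Theorem \ref{thm:comparison_principle}) to $\psi$ and $v$ on the annular parabolic cylinder $A := (B_{2\sqrt n}\setminus \bar B_{1/8})\times(-1,0]$, where the right-hand side of the equation for $v$ vanishes identically (the source $-C_0\chi_{B_{1/8}}$ is supported in $B_{1/8}$). The subsolution $\psi$ is time-independent, so we need only verify $\mathcal{M}^-_{\mathcal{L}_0}\psi(x)\ge 0$ for $x$ in the annulus, together with $\psi\le v$ on the parabolic non-local boundary of $A$.

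The boundary/exterior comparison is immediate: on $\bar B_{1/8}\times(-1,0]$ we have $\psi=-1\le v$ because $\sup v^-=1$; on $B_{2\sqrt n}^c\times(-1,0]$ we have $\psi=0\le v$ because $v\ge 0$ on $\partial_p C_{2\sqrt n,1}$; and on the initial slice $\{t=-1\}$, again $\psi\le 0\le v$.

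The core step is the subsolution property in $A$. Write $\psi(x)=f(|x|)-1$ with $f(r)=((r-1/8)/(2\sqrt n))^\alpha$; then $f$ is smooth, positive, increasing and strictly convex on $(1/8,2\sqrt n)$, so $D^2\psi\ge 0$ on the annulus with radial eigenvalue $f''(r)$ and tangential eigenvalues $f'(r)/r$. For $\alpha>2$, $\psi$ is $C^{1,1}$ globally. Since $D^2\psi\ge 0$, the negative part in Proposition \ref{pro:Pucci_sigma_to_two} vanishes and
\[
\lim_{\sigma\nearrow 2}\mathcal{M}^-_{\mathcal{K}_0}\psi(x)\;=\;c_n\,\lambda\Bigl(f''(r)+\tfrac{n-1}{r}\,f'(r)\Bigr),\qquad r=|x|.
\]
Since $f''(r)/f'(r)=(\alpha-1)/(r-1/8)\ge (\alpha-1)/(2\sqrt n-1/8)$ on the annulus, for $\alpha$ sufficiently large depending only on $n,\lambda,\beta$ one has $c_n\lambda f''(r)\ge 2\beta f'(r)$ there. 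Hence, for $\sigma$ close enough to $2$ (depending on $\alpha$), a quantitative version of Proposition \ref{pro:Pucci_sigma_to_two} yields $\mathcal{M}^-_{\mathcal{K}_0}\psi(x)\ge \beta|D\psi(x)|$, and combining with $\mathcal{M}^-_{\mathcal{L}_0}\psi\ge \mathcal{M}^-_{\mathcal{K}_0}\psi-\beta|D\psi|$ one concludes $\mathcal{M}^-_{\mathcal{L}_0}\psi\ge 0$ in $A$. Theorem \ref{thm:comparison_principle} then delivers $\psi\le v$ in $A$, and together with the pointwise bounds on $\bar B_{1/8}$ and $B_{2\sqrt n}^c$ this gives the lemma.

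The main obstacle is making the second-order comparison quantitative and uniform near the inner boundary $\partial B_{1/8}$, where both $|D\psi|\sim (|x|-1/8)^{\alpha-1}$ and $D^2\psi\sim (|x|-1/8)^{\alpha-2}$ degenerate; although the ratio $D^2\psi/|D\psi|\sim 1/(|x|-1/8)$ blows up favourably, a direct application of the $\sigma\nearrow 2$ limit leaves an error term that risks dominating $D^2\psi$ itself. The standard remedy, in the spirit of the proof of Lemma \ref{lem:barrier_boundary}, is to use the scale invariance of $\mathcal{L}_0$ and the radial rescaling $\tilde\psi(y)=\epsilon^{-\alpha}(\psi(\epsilon y+x_0)+1)$ with $\epsilon=|x_0|-1/8$ to reduce the estimate at each $x_0$ to a unit-scale computation, so that the regularity of $\psi$ and the error in Proposition \ref{pro:Pucci_sigma_to_two} can be controlled uniformly in the annulus; this is where the precise choice of the exponent $\alpha>2$ and the proximity of $\sigma$ to $2$ enter.
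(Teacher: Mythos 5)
Your proposal is correct and follows essentially the same route as the paper: reduce the lemma to showing $\cM^-_{\cL_0}\psi\ge 0$ on the annulus $B_{2\sqrt n}\sm\bar B_{1/8}$ via the radial second-order computation (requiring $\a-1>\b/\l'$) and the limit $\s\nearrow 2$, then conclude by comparison. The paper dispatches the degeneracy of the favourable term near $\partial B_{1/8}$ with only the remark that $\psi$ stays strictly negative inside $B_{2\sqrt n}$ and vanishes outside, whereas you make that step explicit with the rescaling borrowed from Lemma \ref{lem:barrier_boundary}; both resolutions are sound.
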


\begin{proof}
It suffices to show that $\cM^-_{\cL_0} \psi \geq 0$ in $B_{2\sqrt n} \sm \bar B_{1/8}$, for $\a$ sufficiently large to conclude that $\psi \leq v$.

By a standard computation we have for $x \in B_{2\sqrt n}\sm \bar B_{1/8}$, $\a > 2$ and $|x| = r$,
\begin{align*}
\inf_{A \in [\l',\L']} \trace (AD^2\psi) - \b|D\psi| &\geq \frac{\a(r - 1/8)^{\a-2}}{(2\sqrt n)^\a}\1 \l'(\a-1) - \b\2.
\end{align*}
So for $\a-1 > \b/\l'$ and $\s$ sufficiently close to two, we get that $\cM^-_{\cL_0} \psi \geq 0$ in $B_{2\sqrt n} \sm \bar B_{1/8}$. Notice that $\psi = 0$ outside of $B_{2\sqrt n}$ and strictly negative in $B_{2\sqrt n}$, which is important to keep the sign of the operator as $r$ approaches $1/8$.
\end{proof}

\begin{proof}[Proof of Lemma \ref{lem:base}]
Let
\begin{align*}
v = u - \varphi - \inf (u-\varphi) - (16\sqrt n)^{-\a}.
\end{align*}
By the previous lemmas we know that for some universal $C_0>0$,
\begin{align*}
v_t - \cM^-_{\cL_0}v &\geq -C_0\chi_{B_{1/8}} \text{ in $C_{2\sqrt n,1}$},\\
v &\geq 0 \text{ in $\p_p C_{1/4,1}$},\\
\sup v^- &= (16\sqrt n)^{-\a}.
\end{align*}
Now we apply a rescaled version of Theorem \ref{thm:abp} in each time interval
\begin{align*}
J^{(l)} &= [t^{(l)}-\D t/2, t^{(l)}],\\
t^{(l)} &= -l\D t/2 \text{ such that $l \in \N$ and $t^{(l)} \in (-1+3\D t/2, -\D t]$}
\end{align*}
for some universal $r$ sufficiently small such that each cube $\tilde Q^{(l)}_j \ss Q_1$.

The key idea in \cite{Tso85} is to notice that $\Phi(B_{2\sqrt n}\times(-1,0])$ contains a fixed ball of size comparable to $\sup v^-$, which is universal in our case, by bringing (spatial) planes from the past until they hit the graph of $v$ at some point in the contact set. We get in this way that $|\Phi(B_{2\sqrt n}\times(-1,0])|$ is greater than some universal constant. Then we can proceed as in \cite{Davila12-p} to recover the following estimate for the distribution of $v$ from the ABP type of estimate given in Theorem \ref{thm:abp},
\begin{align*}
|\{v \geq M_1\} \cap K_{1,1}| \leq \m_1|K_{1,1}|.
\end{align*}
\end{proof}

\textbf{Step 2:} Lemma \ref{lem:base} will be applied at smaller scales but we need some flexibility on the shape of the domain (given by a parameter $\t \in [1,4]$). The reason will become clear on the next step when we introduce a particular dyadic decomposition which takes into account the scaling of the equation. Keep in mind that the operators $\cM^\pm_{\cL_0}$ are invariant by scaling. 

\begin{corollary}[Iteration]\label{cor:iteration}
Let $\t \in [1,4]$, $m\in\N$, $d_i = \sum_{j=1}^i 3^{\s j}$, and suppose $u \geq 0$ satisfies,
\begin{align*}
u_t-\cM^-_{\cL_0}u &\geq -1 \text{ in $C_{3^{\s(m-1)}2\sqrt n,d_m+\t}(0,d_m\t)$},\\
\inf_{\cup_{i=1}^m K_{3^i,3^{\s i}\t}(0,d_i\t)}u &\leq 1.
\end{align*}
Then,
\begin{align*}
\left|\3u \geq M_2^m\4\cap K_{1,\t}\right| \leq \m_2|K_{1,\t}|,
\end{align*}
for some universal constants $\m_2\in(0,1)$, $M_2>1$ independent of $\s$.
\end{corollary}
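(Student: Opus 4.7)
I proceed by induction on $m$. For the base case $m=1$, the statement is essentially Lemma \ref{lem:base}: when $\s$ is close to $2$, the argument behind Lemma \ref{lem:base} (the special function of Lemma \ref{lem:barrier_pt_est}, the barrier of Lemma \ref{lemma:Barrier2}, and the ABP-type covering of Theorem \ref{thm:abp}) carries over verbatim to the free parameter $\t \in [1,4]$, with universal constants; when $\s$ is bounded away from $2$, the Weak Point Estimate of Corollary \ref{cor:weak_point_estimate} produces the desired measure decay with constants that stay controlled on any compact subinterval of $[1, 2)$. Fix the resulting constants $M_1, \m_1$ and choose $M_2 \geq \max(3^\s, M_1)$, $\m_2 \in [\m_1, 1)$; I verify these survive the induction.

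For the inductive step, the key tool is the parabolic rescaling $(x,t) \mapsto (3x, 3^\s t + t_*)$ combined with the algebraic identity
\begin{equation*}
d_{i+1} = 3^\s d_i + 3^\s, \qquad \text{equivalently} \qquad d_i/3^\s = 1 + d_{i-1},
\end{equation*}
which follows at once from $d_i = \sum_{j=1}^i 3^{\s j}$ (with $d_0 := 0$). This identity is precisely what lets a single time shift $t_*$ (I will take $t_* = 3^\s \t$) align the rescaled dyadic pagoda of $u$ at level $m$ with the pagoda of the rescaled function at level $m-1$, independently of the layer index. Define
\begin{equation*}
\tilde u(x,t) := \frac{u(3x,\ 3^\s t + 3^\s \t)}{M_2}.
\end{equation*}
By the scale invariance of $\cM^-_{\cL_0}$ and of the family $\cL_0$ (stated before Definition \ref{def:linear_family}), one computes $\tilde u_t - \cM^-_{\cL_0}\tilde u \geq -3^\s / M_2 \geq -1$ wherever the original equation holds.

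Unwinding the rescaling and using the identity above, one checks that the layers $i = 2, \ldots, m$ of the level-$m$ pagoda of $u$ pull back to the layers $j = 1, \ldots, m-1$ of the level-$(m-1)$ pagoda of $\tilde u$, while the innermost first layer $K_{3,3^\s \t}(0, d_1\t)$ of $u$'s pagoda pulls back exactly to the target box $K_{1,\t}$ in $\tilde u$ coordinates. Also, since $\s \geq 1$, the spatial factor $3^{\s(m-1)-1} \geq 3^{\s(m-2)}$ ensures that the rescaled equation domain is wide enough in space for the level-$(m-1)$ hypothesis; the $\t \in [1,4]$ flexibility absorbs the fractional correction $\t/3^\s \leq 4/3$ that arises in the width of the rescaled time domain, so that the level-$(m-1)$ hypotheses of the corollary hold for $\tilde u$ with a parameter $\tilde\t \in [1,4]$. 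Now distinguish two cases. If $\inf u \leq 1$ is realized in the inner layers $\bigcup_{i=2}^m K_{3^i, 3^{\s i}\t}(0, d_i\t)$, then $\inf \tilde u \leq 1/M_2 \leq 1$ over the level-$(m-1)$ pagoda of $\tilde u$; the inductive hypothesis gives $|\{\tilde u \geq M_2^{m-1}\} \cap K_{1,\tilde\t}| \leq \m_2 |K_{1,\tilde\t}|$, which translates back via the rescaling to the desired bound $|\{u \geq M_2^m\} \cap K_{3,3^\s\t}(0,d_1\t)| \leq \m_2 |K_{3,3^\s\t}(0,d_1\t)|$; a further application of the base case (with the first-layer infimum in place) propagates this bound to $K_{1,\t}$. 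If instead $\inf u \leq 1$ is attained only in the first layer $K_{3,3^\s\t}(0,d_1\t)$, the base case $m=1$ applies to $u$ directly and yields the conclusion, since $M_2 \geq M_1$.

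The principal obstacle is the algebraic bookkeeping of the rescaling: the sequence $d_i$ does not scale as a pure geometric series under $(x,t) \mapsto (3x, 3^\s t)$, but the identity $d_i / 3^\s = 1 + d_{i-1}$ corrects this by exactly one shift per scale, and the choice $t_* = 3^\s \t$ makes that shift layer-independent. The secondary technical nuisance is that the inductive hypothesis is applied with a parameter $\tilde\t$ possibly slightly different from $\t$; the width $[1,4]$ is wide enough to contain $\tilde\t = 1 + \t/3^\s$ and any other fractional shift that arises in the bookkeeping, so the induction closes inside the same class of cylinders.
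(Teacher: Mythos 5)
Your overall strategy is the intended one: the paper gives no proof of this corollary (it defers to \cite{Davila12}), and the standard argument is exactly the induction on $m$ via the parabolic rescaling $(x,t)\mapsto(3x,3^\s t+3^\s\t)$ together with the identity $d_i=3^\s(1+d_{i-1})$, closing each step with the $\t$-flexible version of Lemma \ref{lem:base}. Your algebra for the pagoda is correct; in fact it is cleaner than you make it sound: with $t_*=3^\s\t$ the layers $i=2,\dots,m$ pull back \emph{exactly} to the level-$(m-1)$ pagoda with the \emph{same} parameter $\t$ (not $\tilde\t=1+\t/3^\s$), and the first layer pulls back exactly to $K_{1,\t}$, so no absorption into the interval $[1,4]$ is needed at this stage. (The role of $\t\in[1,4]$ is to accommodate the dyadic decomposition in Step 3, not the induction here.)

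There is, however, a genuine gap in how you close the induction. In the inner-layer case you define $\tilde u=u(3\cdot,3^\s\cdot+3^\s\t)/M_2$, apply the inductive hypothesis to get $|\{u\ge M_2^m\}\cap K_{3,3^\s\t}(0,d_1\t)|\le\m_2|K_{3,3^\s\t}(0,d_1\t)|$, and then invoke the base case ``to propagate this bound to $K_{1,\t}$.'' But the only information the measure bound gives you on the first layer is $\inf_{K_{3,3^\s\t}(0,d_1\t)}u\le M_2^m$; applying Lemma \ref{lem:base} to $u/M_2^m$ then yields $|\{u\ge M_1M_2^m\}\cap K_{1,\t}|\le\m_1|K_{1,\t}|$, which is a statement about the \emph{higher} level set $M_1M_2^m$, not $M_2^m$. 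As written the induction does not close: each step loses a factor $M_1$. Relatedly, your choice $M_2\ge\max(3^\s,M_1)$ is too weak. The fix is standard: normalize only by the scaling factor, $\tilde u=3^{-\s}u(3\cdot,3^\s\cdot+3^\s\t)$, so that the inductive conclusion gives $\inf_{K_{3,3^\s\t}(0,d_1\t)}u\le 3^\s M_2^{m-1}$; then the base lemma applied to $u/(3^\s M_2^{m-1})$ produces the level set $\{u\ge 3^\s M_1M_2^{m-1}\}$, which is contained in $\{u\ge M_2^m\}$ precisely when $M_2\ge 3^\s M_1$. So you must take $M_2\ge 9M_1$ (uniformly in $\s\in[1,2)$) and $\m_2=\m_1$, reserving the full factor $M_2$ gained at each level to absorb both the scaling loss $3^\s$ and the base-lemma loss $M_1$. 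With that correction the rest of your argument (the two-regime treatment of $\s$, the spatial containment $3^{\s(m-2)+1}\le 3^{\s(m-1)}$, and the case split on where the infimum is attained) is sound.
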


\textbf{Step 3:} We consider now the following dyadic decomposition of $K_{1,1}$ which will allow us to iterate the estimate in measure at smaller scales and for higher level sets.

We start with the box $K_{1,1}$. In each step we consider one of the boxes $K_{r,r^\s\t}(x,t)$ we have already produced and divide $Q_r(x)$ in $2^n$ congruent cubes in space. With respect to the time interval we do the following:
\begin{enumerate}
\item If $1\leq\t<2$ then we subdivide $[t-r^\s\t,t]$ in 2 congruent intervals.
\item If $2\leq\t\leq 4$ then we subdivide $[t-r^\s\t,t]$ in 4 congruent intervals.
\end{enumerate}
Finally we take the cartesian product to form the new generation of dyadic boxes from $K_{r,r^\s\t}(x,t)$.

This procedure satisfies that if $\t\in[1,4]$, then the boxes that $K_{r,r^\s\t}(x,t)$ generates have side length $r/2$ (in space) and $(r/2)^\s\t'$ (in time) for some $\t'\in[1,4]$.

Given two dyadic boxes $K$ and $\tilde K$ we say that $\tilde K$ is the predecessor of $K$ if $K$ is one of the boxes obtained from the decomposition of $\tilde K$.

Due to the configuration of Lemma \ref{lem:base} we need to introduce a new tool which handles a shift in time. Given a box $K = Q\times(t-\t,t]$ and a natural number $m \geq 1$ we define the $m$-stack $K^m := Q\times(t,t+m\t]$.

As a consequence of the Lebesgue Differentiation Theorem we get the following lemma.

\begin{lemma}\label{lem:cz}
Given $A$ with density at most $\m\in(0,1)$ in $K_{1,1}$,
\begin{align*}
\frac{|A \cap K_{1,1}|}{|K_{1,1}|} \leq \m.
\end{align*}
There exists a set of disjoint dyadic boxes $\{K_j\}$ such that:
\begin{enumerate}
\item \textbf{Covers in measure}: $\displaystyle \left|\bigcup_j K_j \sm A\right| = 0$,
\item \textbf{The density of $A$ in each piece is greater than $\mu$}: 
\begin{align*}
\frac{|A \cap K_j|}{|K_j|} > \m,
\end{align*}
\item \textbf{The density of $A$ in the union of $m$-stacks of predecessors is at most $\mu_m := (m+1)\mu/m \sim \m$ ($m$ large)}: 
\begin{align*}
\frac{\left|A \cap \bigcup_j (\tilde K_j)^m\right|}{\left|\bigcup_j (\tilde K_j)^m\right|} \leq \mu_m.
\end{align*}
\end{enumerate}
\end{lemma}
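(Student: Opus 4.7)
The plan is a Calderón--Zygmund stopping-time decomposition on the dyadic parabolic tree defined just above the lemma. Starting from the root $K_{1,1}$, recursively subdivide each box according to the prescribed scheme (halve the spatial cube and split the time interval into $2$ or $4$ subintervals according as $\tau\in[1,2)$ or $\tau\in[2,4]$); for every descendant compute the $A$-density and, as soon as it first exceeds $\mu$, place the box in $\{K_j\}$ and stop. Since the root satisfies $|A\cap K_{1,1}|/|K_{1,1}|\le\mu$ by hypothesis, it is itself subdivided, so every $K_j$ inherits a well-defined immediate predecessor $\tilde K_j$ with $|A\cap\tilde K_j|\le\mu|\tilde K_j|$. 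The $K_j$'s are pairwise disjoint dyadic boxes with density $>\mu$, giving property~(2) automatically.

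Property~(1) is Lebesgue differentiation along the dyadic tree: almost every $x\in A$ is a density one point, so $|A\cap Q|/|Q|\to 1$ as $Q$ shrinks to $x$ through dyadic ancestors. The stopping rule must therefore trigger somewhere above $x$ and deposit $x$ in some $K_j$, so $\bigl|A\setminus\bigcup_jK_j\bigr|=0$.

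Property~(3) is the delicate point. After pruning $\{\tilde K_j\}$ to its maximal elements (dyadic nesting guarantees this does not change $\bigcup_j\tilde K_j$), the distinct predecessors become pairwise disjoint. Using property~(1) and the containment $K_j\subset\tilde K_j$ one has $A\subset\bigcup_j\tilde K_j$ up to a null set, so
\[
\bigl|A\cap\textstyle\bigcup_j(\tilde K_j)^m\bigr|\;\le\;|A|\;=\;\sum_j|A\cap\tilde K_j|\;\le\;\mu\sum_j|\tilde K_j|\;=\;\mu\bigl|\textstyle\bigcup_j\tilde K_j\bigr|,
\]
and the advertised bound $\mu_m=(m+1)\mu/m$ follows from a volume comparison of the form $\bigl|\bigcup_j\tilde K_j\bigr|\le\tfrac{m+1}{m}\bigl|\bigcup_j(\tilde K_j)^m\bigr|$.

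The main obstacle I anticipate is precisely this volume comparison. Two distinct predecessors may sit above the same spatial cube at different time levels, so their $m$-stacks can overlap heavily and a naive disjointness argument on the pruned predecessors does not suffice. The proof therefore has to exploit the explicit temporal subdivision rule (factor $2$ or $4$ depending on $\tau$) together with the fact that within a single spatial column only finitely many predecessors can appear side by side, so that the total volume of the stacks recovers at least a fraction $m/(m+1)$ of the total volume of the bases. Beyond this combinatorial point the proof is routine.
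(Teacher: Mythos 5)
Your stopping-time construction is the intended one, and properties (1) and (2) come out exactly as you describe. Your reduction of property (3) is also sound: after pruning the predecessors to the distinct maximal ones (which are then pairwise disjoint, each still of density at most $\mu$, and whose union of stacks sits inside the full union of stacks, so the denominator only gets smaller), the chain
\begin{align*}
\Bigl|A\cap\bigcup_j(\tilde K_j)^m\Bigr|\;\le\;|A|\;\le\;\mu\Bigl|\bigcup_j\tilde K_j\Bigr|
\end{align*}
leaves exactly the volume comparison $\bigl|\bigcup_j\tilde K_j\bigr|\le\frac{m+1}{m}\bigl|\bigcup_j(\tilde K_j)^m\bigr|$ to be established. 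But that comparison is the entire content of property (3) --- it is the only possible source of the constant $\mu_m=(m+1)\mu/m$ --- and you do not prove it; you only announce it as ``the main obstacle I anticipate.'' So the argument is incomplete precisely at its one non-routine step. The missing ingredient is the stacked covering lemma of Tso and Wang (\cite{Tso85}, \cite{Wang92}; it is also what is used in \cite{Davila12-p}), which is all that the paper itself invokes beyond Lebesgue differentiation.

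Moreover, the mechanism you sketch for closing the gap is off target. The ``factor $2$ or $4$'' temporal subdivision rule is irrelevant here: its only purpose is to keep the height parameter $\tau$ in $[1,4]$ so that Corollary \ref{cor:iteration} applies. What one actually uses is the following. The pruned predecessors are disjoint product boxes $Q_j\times I_j$, and two dyadic cubes containing a common spatial point $x$ are nested, so for each fixed $x$ the time intervals $\{I_j: x\in Q_j\}$ are pairwise disjoint; by Fubini the claim reduces to the one-dimensional statement that for disjoint intervals $I_j=(a_j,b_j]$ with stacks $I_j^m=(b_j,b_j+m|I_j|]$ one has $|\bigcup_jI_j|\le\frac{m+1}{m}|\bigcup_jI_j^m|$. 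This is not a crude count of overlaps ``within a single spatial column'' --- infinitely many disjoint intervals may sit in one column, and the inequality is sharp (packing intervals with $|I_{k+1}|=\frac{m}{m+1}|I_k|$ so that all stacks terminate at a common time gives equality) --- so it requires a genuine rising-sun/Vitali-type selection argument, which is the piece your proposal still owes.
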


In the following lemma we use the previous covering and prove some diminishment of the distribution. However, it has the disadvantage that can not be iterated. The reason being that the smaller level set is measured in $K_{1,d_m+1}(0,d_m)$ instead of $K_{1,1}$. Still it is an important step for the proof of Theorem \ref{thm:PE}.

\begin{lemma}[Raw Diminish of Distribution]\label{lem:stack}
Let $\s,m$ (large), $d_i$ as in Corollary \ref{cor:iteration} and $u \geq 0$ such that,
\begin{align*}
u_t-\cM^-_{\cL_0}u &\geq -1 \text{ in $C_{3^{\s(m-1)}2\sqrt n,d_m+1}(0,d_m)$},\\
\inf_{\cup_{i=1}^m K_{3^i,3^{\s i}}(0,d_i)}u &\leq 1.
\end{align*}
Then, for any $i \in \N$ and for $\m_m (= \frac{m+1}{m}\m_2) \in (0,1)$ and $M_m (= M_2^m) > 1$, we have that,
\begin{align*}
\left|\3u \geq M_m^{i+1}\4 \cap K_{1,1}\right| \leq \m_m\left|\3u \geq M_m^{i}\4 \cap K_{1,d_m+1}(0,d_m)\right|.
\end{align*}
\end{lemma}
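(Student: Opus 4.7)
The plan is to apply the Calder\'on--Zygmund Lemma \ref{lem:cz} to the set $A := \{u \geq M_m^{i+1}\} \cap K_{1,1}$ with parameter $\m = \m_2$, and then invoke the Iteration Corollary \ref{cor:iteration} on each resulting dyadic box via a contradiction argument. The hypothesis of Lemma \ref{lem:cz}, $|A|/|K_{1,1}| \leq \m_2$, follows directly from Corollary \ref{cor:iteration} applied with $\t = 1$: its hypotheses (equation in $C_{3^{\s(m-1)}2\sqrt n,d_m+1}(0,d_m)$ and the stacked infimum condition on $\bigcup_{k=1}^m K_{3^k,3^{\s k}}(0,d_k)$) coincide with ours, and $A \subset \{u \geq M_m\} \cap K_{1,1}$.

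Lemma \ref{lem:cz} then produces disjoint dyadic boxes $\{K_j = Q_{r_j}(x_j)\times(t_j - r_j^\s\t_j, t_j]\}$ covering $A$ in measure with $|A \cap K_j|/|K_j| > \m_2$ and with density of $A$ in $\bigcup_j(\tilde K_j)^m$ bounded by $\m_m$. For each $K_j$ I claim that
\[
(\tilde K_j)^m \subset \{u \geq M_m^i\}\cap K_{1,d_m+1}(0,d_m).
\]
The inclusion in $K_{1,d_m+1}(0,d_m)$ is purely geometric: $\tilde K_j \subset K_{1,1}$ and its $m$-stack rises forward in time by at most $d_m$ units, which is the very reason $d_m = \sum_{k=1}^m 3^{\s k}$ is defined as it is and the subdivision rule $\t\in[1,4]$ is enforced. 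The inclusion in $\{u\geq M_m^i\}$ is established by contradiction: if $u(y_0, s_0) < M_m^i$ for some $(y_0, s_0)\in(\tilde K_j)^m$, then the rescaled function $v(y,s) := u(x_j + r_j y, t_j + r_j^\s s)/M_m^i$ (translated so that $(y_0, s_0)$ lands in one of the stacked boxes $K_{3^k,3^{\s k}\t_j}(0,d_k\t_j)$ of the Iteration Corollary) would satisfy all the hypotheses of Corollary \ref{cor:iteration}: $v \geq 0$, $v_t - \cM^-_{\cL_0}v \geq -1$ (using scale invariance of $\cM^-_{\cL_0}$ and $r_j^\s/M_m^i \leq 1$), and $\inf v \leq 1$ on those stacked boxes. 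Corollary \ref{cor:iteration} would then give $|A\cap K_j|/|K_j| \leq \m_2$, contradicting CZ property (2).

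Combining the claim with property (3) of Lemma \ref{lem:cz} yields
\[
|A| \leq \m_m \bigl|\{u \geq M_m^i\}\cap K_{1,d_m+1}(0,d_m)\bigr|,
\]
which is the desired estimate. The hard part is the geometric bookkeeping in the contradiction step: verifying that, no matter where $(y_0, s_0)$ sits inside $(\tilde K_j)^m$, an appropriate translation places its image in one of the stacked boxes $K_{3^k,3^{\s k}\t_j}(0, d_k\t_j)$, and simultaneously that the cylinder $C_{3^{\s(m-1)}2\sqrt n, d_m+\t_j}(0, d_m\t_j)$ on which $v$ must satisfy the equation sits inside the cylinder $C_{3^{\s(m-1)}2\sqrt n, d_m+1}(0, d_m)$ coming from the hypothesis. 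These compatibilities are precisely what the flexibility $\t\in[1,4]$ in the dyadic subdivision, and the definition of $d_m$, are engineered to guarantee.
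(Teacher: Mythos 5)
Your proposal is correct and follows exactly the route the paper intends (and defers to the reference for): a Calder\'on--Zygmund decomposition of $A=\{u\geq M_m^{i+1}\}\cap K_{1,1}$ at density $\mu_2$ (the entry hypothesis being Corollary \ref{cor:iteration} with $\tau=1$), the inclusion $(\tilde K_j)^m\subset\{u\geq M_m^i\}\cap K_{1,d_m+1}(0,d_m)$ obtained by a rescaled, contradiction-style application of Corollary \ref{cor:iteration}, and the stacked covering estimate to conclude; the geometric compatibilities you defer to the choices of $d_m$ and $\tau\in[1,4]$ do check out. The one point to make explicit in the last step is that what you must extract from property (3) of Lemma \ref{lem:cz} is the bound $|A|\leq\mu_m\bigl|\bigcup_j(\tilde K_j)^m\bigr|$ (the standard stacked covering conclusion), since $A$ is certainly not contained in the forward-in-time stacks and so the literal density statement $|A\cap\bigcup_j(\tilde K_j)^m|\leq\mu_m|\bigcup_j(\tilde K_j)^m|$ does not by itself control $|A|$.
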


\textbf{Step 4:} Theorem \ref{thm:PE} follows from the next diminishment of the distribution lemma. It improves Lemma \ref{lem:stack} due to the fact that the level sets are measured in $K_{1,1}$. The proof goes by contradiction using Lemma \ref{lem:stack} and the dyadic decomposition from the previous step. The details can be found in \cite{Davila12}.

\begin{lemma}[Diminish of Distribution]\label{lem:discrete_point_estimate}
Let $\s,m$ (large), $d_i$ as in Corollary \ref{cor:iteration} and $u \geq 0$ such that,
\begin{align*}
u_t-\cM^-_{\cL_0}u &\geq -1 \text{ in } C_{3^{\s(m-1)}2\sqrt n,d_m+1}(0,d_m),\\
\inf_{K_{1,1}(0,17)}u &\leq 1.
\end{align*}
Then, for any $k \in \N$,
\begin{align*}
 |\{u \geq M^{k+1}\}\cap K_{1,1}| \leq \m|\{u \geq M^k\}\cap K_{1,1}|,
\end{align*}
for $\m\in(0,1)$ and $M>1$ universal and independent of $\s$.
\end{lemma}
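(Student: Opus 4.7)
The plan is to prove Lemma \ref{lem:discrete_point_estimate} by induction on $k$, using the Calderón–Zygmund-type covering of Lemma \ref{lem:cz} to convert the forward-shifted distributional bound of Lemma \ref{lem:stack} into one on $K_{1,1}$.

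For the base case ($k = 0$), I would choose $m$ so that $d_m \geq 17$, ensuring that the infimum hypothesis $\inf_{K_{1,1}(0,17)} u \leq 1$ entails the infimum hypothesis of Lemma \ref{lem:stack}. Applying that lemma with $i = 0$ gives $|\{u \geq M_m\} \cap K_{1,1}| \leq \mu_m |K_{1,1}|$, establishing the conclusion with $M = M_m$ and $\mu = \mu_m$.

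For the inductive step, assume the bound $|\{u \geq M^j\} \cap K_{1,1}| \leq \mu^j |K_{1,1}|$ holds for $j \leq k$, and set $A_j = \{u \geq M^j\} \cap K_{1,1}$. Since $|A_{k+1}|/|K_{1,1}| \leq \mu^{k+1}$ is small, I apply the dyadic covering Lemma \ref{lem:cz} to $A_{k+1}$ with threshold $\mu_m$, obtaining a disjoint family of dyadic boxes $\{K_j\}$ with $|A_{k+1} \cap K_j| > \mu_m |K_j|$ and
\[
\bigg|A_{k+1} \cap \bigcup_j (\tilde K_j)^m\bigg| \leq \mu_m \bigg|\bigcup_j (\tilde K_j)^m\bigg|.
\]
For each $j$, I rescale $\tilde u(y,s) = M^{-k} u(x_j + r_j y, t_j + r_j^\sigma s)$ so that $K_j$ becomes $K_{1,1}$ and the $m$-stack of the predecessor $(\tilde K_j)^m$ corresponds to the forward cylinder of Lemma \ref{lem:stack}. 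Scale invariance of $\mathcal{M}^-_{\mathcal{L}_0}$, together with $r_j \leq 1$ and $M \geq 1$, preserves the inequality with right-hand side still at least $-1$. To invoke Lemma \ref{lem:stack} on $\tilde u$, I need a point in the rescaled forward cylinder where $\tilde u \leq 1$; such a point must exist in $(\tilde K_j)^m$, for otherwise $u \geq M^k$ throughout the stack, and combining with the Calderón–Zygmund bound on $\bigcup_j (\tilde K_j)^m$ would force the density of $A_{k+1}$ in $K_j$ to be at most $\mu_m$, contradicting the stopping rule. Given this point, Lemma \ref{lem:stack} yields $|A_{k+1} \cap K_j| \leq \mu_m |A_k \cap (\tilde K_j)^m|$; summing in $j$ using the disjointness of the $\{K_j\}$ and the controlled overlap of the $\{(\tilde K_j)^m\}$ (intrinsic to the two-rule dyadic grid of Step 3) gives $|A_{k+1}| \leq C \mu_m |A_k|$. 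Choosing $m$ large so that $C \mu_m < 1$ produces $\mu \in (0,1)$ and closes the induction.

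The main obstacle I expect is the verification of the infimum hypothesis for $\tilde u$ in a form compatible with Lemma \ref{lem:stack}. The forward time-shift built into Lemma \ref{lem:stack} (its infimum cylinder sits $d_m$ units in the future) must be matched exactly with the forward $m$-stacks produced by Lemma \ref{lem:cz}, and this matching requires both the careful choice of $m$ and the two-rule dyadic time-subdivision introduced in Step 3. One must also verify that the parabolic cylinder on which the equation for $\tilde u$ is known contains the large cylinder $C_{3^{\sigma(m-1)}2\sqrt n,\, d_m+1}(0, d_m)$ required by Lemma \ref{lem:stack}. The drift is absorbed uniformly in $\sigma$ by the scale-invariant extremal $\mathcal{M}^-_{\mathcal{L}_0}$, so it does not break the scaling step, but it does constrain $m$ through the constants of Lemma \ref{lem:stack}; one has to check that even under this constraint, $C \mu_m$ can be kept strictly below $1$.
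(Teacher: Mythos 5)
Your overall toolkit -- the dyadic/Calder\'on--Zygmund decomposition of Lemma \ref{lem:cz} combined with Lemma \ref{lem:stack} and Corollary \ref{cor:iteration} -- is the right one; the paper itself only sketches this step, describing the argument as one ``by contradiction using Lemma \ref{lem:stack} and the dyadic decomposition'' and deferring the details to \cite{Davila12}. But the pivot of your inductive step is a non sequitur, and your dichotomy runs backwards. You claim each stack $(\tilde K_j)^m$ must contain a point where $u \leq M^k$, ``for otherwise $u \geq M^k$ throughout the stack, and combining with the Calder\'on--Zygmund bound on $\bigcup_j (\tilde K_j)^m$ would force the density of $A_{k+1}$ in $K_j$ to be at most $\mu_m$.'' Conclusion (3) of Lemma \ref{lem:cz} controls the density of $A_{k+1}$ in the union of the stacks, which lie strictly in the future of, and are disjoint from, the boxes $K_j$; it says nothing about the density of $A_{k+1}$ in $K_j$, and knowing $u \geq M^k$ on a stack gives no information about the level set $\{u \geq M^{k+1}\}$ anywhere. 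In fact no such point need exist, and the case you try to exclude is exactly the one the argument is built on. The correct implication, obtained by rescaling Corollary \ref{cor:iteration} to the scale of $K_j$, is the contrapositive: if some point of the forward boxes satisfied $u \leq M^k$, then the density of $\{u \geq M^{k+1}\} = \{u/M^k \geq M\}$ in $K_j$ would be at most $\mu_2 < \mu$, contradicting the stopping rule. Hence $u > M^k$ on the whole stack, the stacks are absorbed into $\{u \geq M^k\}$, and the measure-theoretic conclusion then comes from conclusion (3) of Lemma \ref{lem:cz} applied to this inclusion -- not from applying Lemma \ref{lem:stack} box by box and summing.

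Two further points. Your summation $\sum_j |A_k \cap (\tilde K_j)^m| \leq C |A_k|$ presupposes a bounded-overlap property of the stacks that is not ``intrinsic'' to the dyadic grid: stacks of distinct predecessors sitting above one another in time can overlap with no uniform bound, and stacks may protrude past $t = 0$ out of $K_{1,1}$, where they no longer meet $A_k$. Controlling the overlap is precisely the content of the stacked covering lemma (conclusion (3) of Lemma \ref{lem:cz}), and the protruding boxes are ruled out, again via Corollary \ref{cor:iteration}, by the hypothesis $\inf_{K_{1,1}(0,17)} u \leq 1$; you flag this issue but do not resolve it. Finally, the base case is off: Lemma \ref{lem:stack} with $i = 0$ bounds $|\{u \geq M_m\} \cap K_{1,1}|$ by $\mu_m |\{u \geq 1\} \cap K_{1,d_m+1}(0,d_m)| \leq \mu_m (d_m+1) |K_{1,1}|$, not by $\mu_m |K_{1,1}|$; the initial density bound must come from Lemma \ref{lem:base} or Corollary \ref{cor:iteration} directly.
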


\section{Oscillation Lemma and Harnack Inequality}\label{section:Oscillation Lemma and Harnack Inequality}

Both results announced in the title of this section were left open in \cite{Davila12-p}. By an oscillation lemma we mean a pointwise estimate for non negative sub solutions in terms of an integral norm, it is preliminary to the well known Harnack type of estimate. At the end of this section we will state some of the H\"older estimates that result as corollaries.

Here is some notation needed for the rest of the section.
\begin{align*}
\varphi(y,s) &= \varphi(|y|,s) := ((1+s)^{1/\s}-|y|)^{-(n+\s)},\\
P_r(x,t) &:= \{(y,s) \in \R^{n+1}: |y-x|^\s - r^\s \leq s-t \leq 0\},\\
P_r &:= P_r(0,0).
\end{align*}

\begin{lemma}[Oscillation Lemma]\label{lem:oscillation}
Let $u \geq 0$ satisfies,
\begin{alignat*}{2}
u_t - \cM^+_{\cL_0} u &\leq f(t) &&\text{ in $P_1$},\\
\|f^+\|_{L^1(-1,0]} &\leq 1,\\
\|u\|_{L^1((-1,0]\mapsto L^1(\w_\s))} &\leq 1.
\end{alignat*}
Then
\begin{align*}
u \leq C\varphi \text{ in $P_1$},
\end{align*}
for some universal $C>0$.
\end{lemma}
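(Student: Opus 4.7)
The plan is to argue by contradiction, using the Point Estimate (Theorem \ref{thm:PE}) at a near-contact point of $u$ with a multiple of the barrier $\varphi$, and then deriving a contradiction with the $L^1(L^1(\w_\s))$-hypothesis.

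First, let $m := \sup_{P_1} u/\varphi$ and suppose for contradiction that $m$ exceeds any prescribed universal constant. Since $\varphi \to \8$ at the parabolic boundary of $P_1$ while $u$ is finite a.e.\ (being $\w_\s$-integrable), there exists $(x^*,t^*)$ in the interior of $P_1$ with $u(x^*,t^*) \geq (m/2)\varphi(x^*,t^*)$. Let $d := (1+t^*)^{1/\s} - |x^*| > 0$ be the parabolic distance to the boundary, so $\varphi(x^*,t^*) = d^{-(n+\s)}$. Pick a small universal $\r \in (0,1/4)$ so that the enlarged cylinder $C_{2\r d, 2(\r d)^\s}(x^*,t^*)$ lies inside $P_1$ and $\varphi \leq C_0\varphi(x^*,t^*)$ there, with $C_0$ universal; both follow from a direct expansion of $(1+s)^{1/\s}-|y|$ using $d^\s \leq 1+t^*$ and the convexity of $r \mapsto r^\s$ for $\s \in [1,2)$.

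Next define $v := 2C_0 m\varphi(x^*,t^*) - u$. On the enlarged cylinder $v \geq C_0 m\varphi(x^*,t^*) \geq 0$, and the identity $\cM^-_{\cL_0}(-u) = -\cM^+_{\cL_0}(u)$ combined with the sub-solution inequality for $u$ gives
\begin{align*}
v_t - \cM^-_{\cL_0} v \geq -f(t) \quad \text{on } C_{2\r d, 2(\r d)^\s}(x^*,t^*),
\end{align*}
while $v(x^*,t^*) \leq (3/2)C_0 m\varphi(x^*,t^*)$. Rescaling by $\r d$ in space and $(\r d)^\s$ in time, and dividing by $A := 2C_0 m\varphi(x^*,t^*)$, produces a non-negative $\tilde v$ with $\tilde v(0,0) \leq 3/4$, satisfying an analogous super-solution inequality with right-hand side of $L^1_t$-norm $\lesssim (\r d)^\s/A \sim d^{n+2\s}/m$, which is small. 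The $L^1$-form of Theorem \ref{thm:PE} (i.e.\ its corollary for right-hand sides $f(t)$) then yields universal $s_0,\m \in (0,1)$ such that $\tilde v \leq s_0$ on at least a fraction $\m$ of the conclusion cylinder, equivalently $u \geq c\, m\varphi(x^*,t^*)$ on at least $\m|Q_{\mathrm{conc}}|$ in $Q_{\mathrm{conc}} := C_{\r d,(\r d)^\s}(x^*,t^*-(\r d)^\s) \ss P_1$.

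Finally, since $Q_{\mathrm{conc}} \ss P_1 \ss B_{2^{1/\s}} \times (-1,0]$, the weight $\w_\s$ is bounded below by a universal constant on $Q_{\mathrm{conc}}$, so
\begin{align*}
\int_{-1}^0 \|u(\cdot,s)\|_{L^1(\w_\s)}\,ds \geq c\, m\varphi(x^*,t^*)|Q_{\mathrm{conc}}| \geq c\, m\, d^{-(n+\s)}d^{n+\s} = c\, m,
\end{align*}
contradicting the hypothesis once $m$ is larger than a universal constant. The most delicate regime is $d \to 0$, when $(x^*,t^*)$ approaches the parabolic boundary and $\varphi(x^*,t^*)$ blows up; verifying that the enlarged cylinder stays in $P_1$ and $\varphi$ stays comparable to $\varphi(x^*,t^*)$ on it is the main technical point, and is handled by a careful convexity expansion of $(1+s)^{1/\s}-|y|$. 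A secondary subtlety is the time arrangement of Theorem \ref{thm:PE}: its infimum hypothesis is on the top half-cylinder while its distributional conclusion is on the bottom half, so $(x^*,t^*)$ is placed at the top of the infimum cylinder, which is compatible with $t^* \leq 0$ since one can always look backwards in time inside $P_1$.
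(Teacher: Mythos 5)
There is a genuine gap, and it sits exactly where the real work of this lemma lies. You apply the Point Estimate (Theorem \ref{thm:PE}) to $v := 2C_0 m\varphi(x^*,t^*) - u$, justifying the supersolution inequality by $\cM^-_{\cL_0}(-u) = -\cM^+_{\cL_0}(u)$ and the sign condition by checking $v \geq 0$ \emph{on the enlarged cylinder}. But the Point Estimate is a non-local statement: its hypothesis $v\geq 0$ must hold wherever the operator sees the function, i.e.\ in all of $\R^n$ (the proof of the Key Lemma \ref{lem:key_lemma} explicitly uses $u\geq 0$ in $B_1^c$ to discard the tail of $\cM^-_{\cL_0}$ with the right sign). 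Your $v$ is negative wherever $u$ exceeds the constant $2C_0 m\varphi(x^*,t^*)$ --- which happens near $\partial_p P_1$, where $u$ may be as large as $m\varphi$ with $\varphi$ blowing up, and outside $P_1$, where $u$ is not controlled pointwise at all. The fix is to truncate, $w = (\,\text{cap} - u)^+$, and then the equation for $w$ acquires the error term $-\cM^+_{\cL_0}(w+u)$ coming from the truncation; controlling that error through $\|u(s)\|_{L^1(\w_\s)}$ is the central estimate of the paper's proof and is entirely absent from your plan. (Your only use of the $L^1(\w_\s)$ hypothesis is the Chebyshev bound at the very end; the paper uses it twice, and the second use is the essential one.)

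A second, independent problem is the normalization. After rescaling you have $\tilde v(0,0)\leq 3/4$, and you claim the Point Estimate "yields universal $s_0,\mu\in(0,1)$ such that $\tilde v\leq s_0$ on a fraction $\mu$." It does not: with $\inf\tilde v$ of order one, the conclusion $|\{\tilde v>s\}|\leq Cs^{-\e}$ is only nontrivial for $s$ a \emph{large} universal constant, and $\tilde v\leq s_0$ with $s_0>1$ translates into the vacuous bound $u\geq A(1-s_0)<0$. To get anything, the value of the truncated function at the base point must be a \emph{small} fraction of the truncation level. This is why the paper works at an exact contact point $u(x,t)=M\varphi(x,t)=u_0$ and caps at $u_0(1-\theta/2)^{-(n+\s)} = \sup_{P_{\theta r}(x,t)} M\varphi$, so that $w(x,t)/u_0 = (1-\theta/2)^{-(n+\s)}-1 = O(\theta)$ can be made small by choosing $\theta$; your factor-$2$ near-contact point and cap $2C_0 m\varphi(x^*,t^*)$ destroy this smallness irreparably. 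Your final step (lower-bounding $\int\|u\|_{L^1(\w_\s)}$ by $c\,m$ on the good set) is a legitimate alternative to the paper's two-sided measure count, but only after both issues above are repaired essentially along the paper's lines.
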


The previous lemma implies the following corollary.

\begin{corollary}\label{cor:oscillation1}
Let $\cL\ss \cL_0$, $I:\W\times(t_1,t_2]\times\R^\cL\to\R$ uniformly elliptic and such that $I0=0$. Let $u$ satisfies,
\begin{align*}
u_t - I u &\leq f \text{ in $\W\times(t_1,t_2]$}.
\end{align*}
Then for every $\W'\times(t_1',t_2] \cc \W\times(t_1,t_2]$,
\begin{align*} 
\sup_{\W'\times(t_1',t_2]} u^+ \leq C\1\|u^+\|_{L^1((t_1,t_2] \mapsto L^1(\w_\s))} + \|f^+\|_{L^1((t_1,t_2]\mapsto L^\8(\W))}\2,
\end{align*}
for some universal $C>0$ depending on the domains.
\end{corollary}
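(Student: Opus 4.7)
The plan is to reduce the corollary to the Oscillation Lemma~\ref{lem:oscillation} after (i) passing from $I$ to $\cM^+_{\cL_0}$, (ii) passing from $u$ to $u^+$, and (iii) localizing and rescaling at each point of $\W'\times(t_1',t_2]$. Since $I$ is uniformly elliptic with respect to $\cL\ss\cL_0$ and $I0=0$, we have $Iu = Iu - I0 \leq \cM^+_\cL u \leq \cM^+_{\cL_0} u$, so $u_t - \cM^+_{\cL_0}u \leq f$ in the viscosity sense in $\W\times(t_1,t_2]$. Taking the maximum with the trivial subsolution $0$ (which satisfies $0 \leq f^+$), the standard dual of Property~\ref{pro:viscosity_solution}(1) yields
\begin{equation*}
(u^+)_t - \cM^+_{\cL_0}(u^+) \leq f^+(x,t) \leq F(t) := \|f^+(\cdot,t)\|_{L^\8(\W)} \quad \text{in } \W\times(t_1,t_2],
\end{equation*}
which is precisely the shape of inequality required by Lemma~\ref{lem:oscillation}.

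Fix $(x_0,t_0)\in\W'\times(t_1',t_2]$. Since $\W'\cc\W$ and $t_1'>t_1$, there exists $r_0>0$ depending only on the domains with $B_{r_0}(x_0)\times(t_0-r_0^\s,t_0]\ss\W\times(t_1,t_2]$, so that $P_{r_0}(x_0,t_0)$ lies in the equation's domain. Using translation and scale invariance of $\cM^+_{\cL_0}$, define
\begin{equation*}
\tilde u(y,s) := A^{-1}\,u^+(x_0+r_0 y,\, t_0+r_0^\s s),
\end{equation*}
so that $\tilde u \geq 0$ and $\tilde u_t - \cM^+_{\cL_0}\tilde u \leq \tilde F(s) := A^{-1} r_0^\s F(t_0 + r_0^\s s)$ in $P_1$. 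A change of variable gives $\|\tilde F\|_{L^1(-1,0]} \leq A^{-1}\|f^+\|_{L^1((t_1,t_2]\mapsto L^\8(\W))}$, and with $z=x_0+r_0 y$, $\tau=t_0+r_0^\s s$,
\begin{equation*}
\|\tilde u\|_{L^1((-1,0]\mapsto L^1(\w_\s))} = A^{-1}\,r_0^{-(n+\s)}\int_{t_0-r_0^\s}^{t_0}\!\!\int_{\R^n} u^+(z,\tau)\,\w_\s\!\left(\frac{z-x_0}{r_0}\right)dz\,d\tau.
\end{equation*}
Splitting $\R^n$ into the bounded region $\{|z|\leq 2(|x_0|+1)\}$, where $\w_\s(z)\geq c(\W)>0$ uniformly in $\s\in[1,2)$, and its complement, where $|z-x_0|\geq|z|/2$ gives $\w_\s((z-x_0)/r_0) \leq (2r_0)^{n+\s}\w_\s(z)$, yields the weight comparison $\w_\s((z-x_0)/r_0) \leq C(\W,r_0)\w_\s(z)$ with a constant uniform in $\s$. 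Hence $\|\tilde u\|_{L^1((-1,0]\mapsto L^1(\w_\s))} \leq C\, A^{-1} \|u^+\|_{L^1((t_1,t_2]\mapsto L^1(\w_\s))}$.

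Choosing $A = C_0 \left( \|u^+\|_{L^1((t_1,t_2]\mapsto L^1(\w_\s))} + \|f^+\|_{L^1((t_1,t_2]\mapsto L^\8(\W))} \right)$ with $C_0$ sufficiently large makes $\tilde u$ satisfy all three hypotheses of Lemma~\ref{lem:oscillation}, giving $\tilde u \leq C\varphi$ in $P_1$; evaluating at $(0,0)$, where $\varphi(0,0)=1$, yields $u^+(x_0,t_0) \leq C A$. Taking the supremum over $(x_0,t_0)\in\W'\times(t_1',t_2]$ closes the proof. The only subtle point is uniformity in $\s$: it survives because the Pucci operator, the cone $P_r$, and the weight $\w_\s$ all scale cleanly, and the multiplicative constant in the weight comparison is controlled purely by the bounded range $n+\s\in[n+1,n+2)$.
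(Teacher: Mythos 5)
Your argument is correct and is precisely the reduction the paper has in mind when it states that Lemma~\ref{lem:oscillation} ``implies'' Corollary~\ref{cor:oscillation1}: pass to $\cM^+_{\cL_0}$ and $u^+$ via uniform ellipticity and the max-of-subsolutions property, then localize, rescale, normalize, and compare the translated weight $\w_\s(\cdot-x_0)$ to $\w_\s$. The only (minor) point worth adding is the degenerate case where both norms on the right-hand side vanish, handled by replacing $A$ with $A+\e$ and letting $\e\to0$.
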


\begin{proof}[Proof of Lemma \ref{lem:oscillation}]
Let $M := \inf\{ M' \in \R^+: u \leq M'\varphi \text{ in } P_1\}$. It suffices to show that $M$ is universally bounded.

Let $(x,t) \in P_1$ such that $u_0 := u(x,t) = M\varphi(x,t) = Md^{-(n+\s)}$ where $d :=  ((1 + t)^{1/\s} - |x|) > 0$. Since $u \in L^1(P_1)$, we have that, for any $\theta\in(0,1)$,
\begin{align*}
 \left|\3 u \geq \frac{u_0}{2} \4 \cap P_{\theta d/8}(x,t)\right| &\leq Cu_0^{-1} \leq C\theta^{-(n+\s)}M^{-1}\left|P_{\theta d/8}(x,t)\right|.
\end{align*}
As we want to get a contradiction if $M$ is arbitrarily large, we will show that
\begin{align}
 \left|\3 u < \frac{u_0}{2} \4\cap P_{\theta d/8}(x,t)\right| \leq \frac{1}{2}\left|P_{\theta d/8}\right|,
\end{align}
for some $\theta>0$ to be fixed, independent of $M$.

Let $r := d/2$. On $P_{\theta r}(x,t)$, $u$ is bounded from above by $M\varphi(|x| + \theta r,t) = u_0(1-\theta/2)^{-(n+\s)}$. Indeed, by the geometry of the paraboloids that we have constructed, it is not difficult to check that the closest point to $P_1^c$ in $P_{\theta r}(x,t)$ gets realized at time $t$ and at some point in $\p B_{\theta r}(x)$. 

Consider $w = (u_0(1-\theta/2)^{-(n+\s)} - u)^+$, which is just $(u_0(1-\theta/2)^{-(n+\s)} - u)$ in $P_{\theta r}(x,t)$. In the smaller domain $P_{\theta r/2}(x,t)$, $w$ satisfies an equation with a right hand side that includes a contribution coming from the truncation,
\begin{align*}
 w_t - \cM^-_{\cL_0} w &\geq -f - \cM^+_{\cL_0}(w+u) \text{ in $P_{\theta r/2}(x,t)$}
\end{align*}
We want to show that for $(y,s) \in P_{\theta r/2}(x,t)$, we have
\begin{align}\label{eq:rhs_osc_lemma}
\int_{t-(\theta r/2)^\s}^t F(s) ds \leq C(\theta r)^{-(n+\s)},
\end{align}
where
\begin{align*}
F(s) := \sup_{(y,s) \in P_{\theta r/2}(x,t)}f(s) + \cM^+_{\cL_0}(w+u)(y,s).
\end{align*}
Note that for $(y,s) \in P_{\theta r/2}(x,t)$,
\begin{align*}
\d(w+u)(y;z) &= (w+u)(y+z) - u_0(1-\theta/2)^{-(n+\s)} - 0,\\
&= \1u(y+z)-u_0(1-\theta/2)^{-(n+\s)}\2^+,\\
&\leq u(y+z)\chi_{B^c_{\theta r/2}}(z),
\end{align*}
and so we get
\begin{align*}
\cM^+_{\cL_0}(w+u)(y,s) &\leq C\int_{B^c_{\theta r/2}}\frac{u(z+y,s)}{|z|^{n+\s}}dz,\\
&\leq C(\theta r)^{-(n+\s)}\|u(s)\|_{L^1(\w_\s)}.
\end{align*}
This proves \eqref{eq:rhs_osc_lemma} assuming that $\theta r$ is sufficiently small to dominate over $\|f^+\|_{L^1} \leq 1$.

Now we apply the Point Estimate Theorem \ref{thm:PE} to $w$ with respect to the domains $P_{\theta r/2}(x,t)$ and $P_{\theta r/4}(x,t)$. To justify it you may use a standard covering argument.
\begin{align*}
&\frac{\left|\3u<u_0/2\4 \cap P_{\theta r/4}(x,t)\right|}{|P_{\theta r/4}(x,t)|},\\
= \ &\frac{\left|\3w > u_0((1-\theta/2)^{-(n+\s)}-1/2)\4 \cap P_{\theta r/4}(x,t)\right|}{|P_{\theta r/4}(x,t)|},\\
\leq \ &C(w(x,t) + (\theta r)^{-(n+\s)}(\theta r)^\s)^\e(u_0((1-\theta/2)^{-(n+\s)} - 1/2))^{-\e}.
\end{align*}
Now, by taking $\theta$ sufficiently small such that $(1-\theta/2)^{-(n+\s)} \geq 3/4$ and dropping the factor $(\theta r)^\s \leq 1$, we get
\begin{align*}
\frac{\left|\3u<u_0/2\4 \cap P_{\theta r/4}(x,t)\right|}{|P_{\theta r/4}(x,t)|}  &\leq C(w(x,t) + (\theta r)^{-(n+\s)})^\e u_0^{-\e},\\
&\leq C(((1-\theta/2)^{-(n+\s)}-1)+M^{-1}\theta^{-(n+\s)})^\e,\\
&\leq C[((1-\theta/2)^{-(n+\s)}-1)^\e+(M^{-1}\theta^{-(n+\s)})^\e].
\end{align*}
Finally, fix $\theta$ even smaller such that $C((1-\theta/2)^{-(n+\s)}-1)^\e \leq 1/4$ which implies that for $M$ sufficiently large the right hand side above becomes smaller than $1/2$, independently from the now fixed $\theta$.
\end{proof}

The same technique used for Lemma \ref{lem:oscillation} allows us to handle the Harnack Inequality for this class of parabolic non-local operators.

For the following result we fix $\e>0$, the one given by the Point estimate Theorem \ref{thm:PE}, and define $\varphi_\e$ as
\begin{align*}
\varphi_\e(|y|,s)= ((1+s)^{1/\s}-|y|)^{-(n+\s)/\e}.
\end{align*}

\begin{theorem}[Harnack Inequality]
Let $u\geq0$ satisfy
\begin{alignat*}{2}
 u(0,1) &\leq 1,\\
 u_t - \cM^+_{\cL_0}u &\leq f(t), &&\text{ in $C_{1,2}(0,1)$},\\
 u_t - \cM^-_{\cL_0}u &\geq -f(t), &&\text{ in $C_{1,2}(0,1)$},\\
 \|f\|_{L^1(-1,1]} &\leq 1.
\end{alignat*}
Then
\begin{align*}
 u \leq C\varphi_\e \text{ in $P_1$},
\end{align*}
for some universal constant $C$.
\end{theorem}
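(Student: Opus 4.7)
The argument mimics the proof of the Oscillation Lemma (Lemma \ref{lem:oscillation}): we locate a contact point of $u$ with an envelope $M\varphi_\e$, and for large $M$ deduce a contradiction by showing that in a small paraboloid around the contact point both $\{u > u_0/2\}$ and $\{u \leq u_0/2\}$ occupy strictly less than half of that paraboloid. The key difference is that the global $L^1(\w_\s)$ tail control used in Lemma \ref{lem:oscillation} must be replaced here by the weak $L^\e$ control produced by the Point Estimate (Theorem \ref{thm:PE}); this replacement is exactly what forces the steeper exponent $(n+\s)/\e$ in $\varphi_\e$.

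Set $M := \inf\{M' \geq 0 : u \leq M'\varphi_\e \text{ in } P_1\}$; the goal is $M \leq C$ universal. Assume $M$ is arbitrarily large and pick $(x,t) \in P_1$ with $u_0 := u(x,t) = Md^{-(n+\s)/\e}$, where $d := (1+t)^{1/\s} - |x| > 0$. The first step is to propagate the single bound $u(0,1) \leq 1$ into a uniform weak $L^\e$ estimate on $P_1$: apply the rescaled corollary of the Point Estimate centered at $(0,1-r_0^\s)$ for a sufficiently small universal $r_0$ (small enough that the equation domain fits inside $C_{1,2}(0,1)$), which gives $|\{u>s\}| \leq Cs^{-\e}$ on a slab immediately preceding $t=1$; in this slab select a point where $u$ is universally bounded, apply the Point Estimate there, and iterate finitely many times backwards in time until $P_1$ is covered. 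The resulting estimate $|\{u>s\} \cap P_1| \leq Cs^{-\e}$, with $s = u_0/2$, gives for the upper half set
\begin{align*}
|\{u>u_0/2\} \cap P_{\theta d/8}(x,t)| \leq Cu_0^{-\e} = CM^{-\e}d^{n+\s} \leq CM^{-\e}\theta^{-(n+\s)}|P_{\theta d/8}(x,t)|,
\end{align*}
which is $\leq \tfrac14|P_{\theta d/8}(x,t)|$ once $M$ is large in terms of the (yet to be fixed) universal $\theta$.

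For the lower half set, set $A := u_0(1-\theta/2)^{-(n+\s)/\e}$; by definition of $M$, $u \leq A$ in $P_{\theta d/2}(x,t)$, so the non-negative truncation $w := (A - u)^+$ equals $A-u$ there and satisfies
\begin{align*}
w_t - \cM^-_{\cL_0}w \geq -f - \cM^+_{\cL_0}(w + u) \text{ in } P_{\theta d/4}(x,t),
\end{align*}
with $\cM^+_{\cL_0}(w+u)(y,s) \leq C\int_{B^c_{\theta d/4}} [u(y+z,s) - A]^+\,|z|^{-(n+\s)}\,dz$. The time-integrated right-hand side is controlled by combining the weak $L^\e$ estimate from the previous step with the pointwise envelope $u \leq M\varphi_\e$ on $P_1$ (layer-cake against the distribution $|\{u>s\}| \leq Cs^{-\e}$, cut off above by $M\varphi_\e$), producing a bound of order $(\theta d)^{-(n+\s)}$ in which the factor $M$ arising from the envelope ultimately cancels against the $u_0^{-\e}$ factor produced by the Point Estimate applied to $w$. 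Using $w(x,t) \leq C\theta u_0$ and $A - u_0/2 \geq u_0/2$, the Point Estimate then yields
\begin{align*}
|\{u < u_0/2\} \cap P_{\theta d/8}(x,t)| \leq C\theta^\e|P_{\theta d/8}(x,t)|,
\end{align*}
which is $\leq \tfrac12|P_{\theta d/8}(x,t)|$ once $\theta$ is chosen small enough, independently of $M$. Fixing such $\theta$ first and then taking $M$ sufficiently large closes the contradiction.

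\textbf{Main obstacle.} The delicate point is the tail estimate for $\cM^+_{\cL_0}(w+u)$ in the lower-half step. In the Oscillation Lemma this was immediate from the hypothesised $L^1(\w_\s)$ control on $u$, but since the conclusion here must be universal in the tails of $u$, the bound has to be extracted from the weak $L^\e$ estimate of the first step together with the envelope $u \leq M\varphi_\e$. Getting the factor of $M$ arising from the envelope to cancel cleanly against the $u_0^{-\e}$ factor, while keeping $\theta$ choosable \emph{before} $M$, is the central computational issue. The propagation argument producing the weak $L^\e$ bound on $P_1$ is standard but must also be carried out with care so that the accumulated constants remain universal.
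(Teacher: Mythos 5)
Your overall architecture (contact point with $M\varphi_\e$, splitting $P_{\theta d/8}(x,t)$ into the sets $\{u\geq u_0/2\}$ and $\{u<u_0/2\}$, truncation $w$, Point Estimate applied to $w$) matches the paper's proof. However, the step you yourself flag as the ``main obstacle'' --- the bound on the truncation error $\cM^+_{\cL_0}(w+u)$ --- is handled by a mechanism that does not work, and the correct mechanism is a specific ingredient of the paper that your proposal never invokes. The paper's first move is to apply the Weak Point Estimate (Corollary \ref{cor:weak_point_estimate}) to the supersolution inequality, which yields the global-in-space, $L^1$-in-time tail bound
\begin{align*}
(2-\s)\int_{-1}^{0}\|u(s)\|_{L^1(\w_\s)}\,ds \leq C.
\end{align*}
This controls $u$ over \emph{all} of $\R^n$ (not just $P_1$), and the prefactor $(2-\s)$ matches the $(2-\s)$ in the definition of $\cM^+_{\cL_0}$, so that
$\int_{t-(\theta r/2)^\s}^{t}\cM^+_{\cL_0}(w+u)(y,s)\,ds \leq C(\theta r)^{-(n+\s)}$ follows directly and with no factor of $M$.

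Your substitute --- layer-cake against the weak $L^\e$ distribution estimate on $P_1$, cut off by the envelope $M\varphi_\e$ --- fails for two reasons. First, both ingredients only see $u$ inside $P_1$, whereas $\cM^+_{\cL_0}(w+u)(y,s)$ integrates $u(y+z,s)$ over all $z$, including points far outside the cylinder $C_{1,2}(0,1)$ where the hypotheses give no information beyond $u\geq 0$; without Corollary \ref{cor:weak_point_estimate} the far tail is simply uncontrolled. Second, even inside $P_1$ the combination does not produce an $L^1$ bound: $\int_1^\8 s^{-\e}\,ds$ diverges (since $\e<1$), and the pointwise cutoff $M\varphi_\e$ is itself non-integrable near $\p P_1$ because the exponent $(n+\s)/\e$ exceeds $n$. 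Consequently the advertised ``cancellation of the factor $M$ against $u_0^{-\e}$'' never has to occur in the correct argument --- in the paper the tail bound is $M$-independent --- and in your argument it cannot be made to occur because the quantity it is supposed to bound has not been bounded. The fix is to insert the Weak Point Estimate as the opening step; the rest of your outline then goes through essentially as in the paper.
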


\begin{corollary}\label{cor:harnack}
Let $\cL\ss \cL_0$, $I:C_{2,4}\times\R^\cL\to\R$ uniformly elliptic and such that $I0=0$. Let $u\geq0$ satisfy,
\begin{align*}
u_t - I u &= f(t) \text{ in $C_{2,4}$}.
\end{align*}
Then,
\begin{align*} 
\sup_{C_{1,1}(0,-2)} u \leq C\1\inf_{C_{1,1}}u + \|f^+\|_{L^1((-4,0]\mapsto L^\8(B_2))}\2,
\end{align*}
for some universal $C>0$.
\end{corollary}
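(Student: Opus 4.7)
The plan is to reduce to the extremal operator form of the previous theorem, normalize, and then iterate the resulting pointwise estimate via a parabolic Harnack chain.

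First, uniform ellipticity of $I$ together with $I0 = 0$ gives $\cM^-_{\cL_0}(u-0) \leq Iu - I0 \leq \cM^+_{\cL_0}(u-0)$, so that
\begin{align*}
u_t - \cM^+_{\cL_0} u \leq f(t) \quad \text{and} \quad u_t - \cM^-_{\cL_0} u \geq -|f(t)| \quad \text{in } C_{2,4},
\end{align*}
in the viscosity sense. Setting $M := \inf_{C_{1,1}} u + \|f^+\|_{L^1((-4,0]\to L^\infty(B_2))} + \d$ for small $\d > 0$, the function $v := u/M$ is a nonnegative viscosity solution of the same two-sided extremal inequalities (the extremal operators being positively homogeneous of degree one) with right-hand side $\tilde f$ satisfying $\|\tilde f^+\|_{L^1((-4,0]\to L^\infty(B_2))} \leq 1$. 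By lower semicontinuity in space and the integral semicontinuity in time (Definition of $LSC((t_1,t_2]\mapsto L^1(\w_\s))$), there exists $(x^*, t^*) \in \overline{C_{1,1}}$ with $v(x^*, t^*) \leq 1$.

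Next, the previous theorem combined with the scale-invariance of $\cM^\pm_{\cL_0}$ (Section \ref{section:Preliminaries}) yields a rescaled Harnack estimate: for any $R \in (0,1]$, $(x_0, t_0)$ with $B_R(x_0) \subset B_2$ and $(t_0 - 2R^\s, t_0] \subset (-4, 0]$, and any $\r \in (0, 1)$, if $v(x_0, t_0) \leq K$ then
\begin{align*}
v(y, s) \leq C_\r K \quad \text{in } P_\r(x_0, t_0 - R^\s),
\end{align*}
where $C_\r$ is universal, arising from $\sup_{P_\r} \varphi_\e < \8$ since $P_\r \subset P_1$ stays away from the singular tip. The $L^1$ condition on $\tilde f$ rescales to $\|\tilde f\|_{L^1(t_0-2R^\s,t_0]} \leq 1 \leq R^{-\s}\cdot R^\s$, which after dividing by $K \vee 1$ fits the theorem's normalization. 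Applying this with $(x_0, t_0) = (x^*, t^*)$, $R = 1$, $K = 1$ controls $v$ by $C_\r$ on $P_\r(x^*, t^* - 1)$, whose tip lies in $\overline{C_{1,1}(0, -2)}$.

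To cover all of $C_{1,1}(0, -2)$ we chain: from any point $(x_1, t_1)$ in the just-controlled region, $v(x_1, t_1) \leq C_\r$, and a fresh application of the scaled theorem controls $v$ on a further paraboloid. Iterating, a universal number $N = N(n, \l, \L, \b)$ of applications — independent of $\s \in [1,2)$ because the paraboloid geometry and all constants are — produces a covering of $C_{1,1}(0, -2)$ by paraboloids lying in $C_{2,4}$, yielding $v \leq C_\r^N$ on $C_{1,1}(0, -2)$. Undoing the normalization and sending $\d \to 0$ gives the claim. The main technical obstacle is verifying this chaining: one must produce a finite sequence of rescaled paraboloids, each satisfying the containment $(t_0 - 2R^\s, t_0] \subset (-4, 0]$ and $B_R(x_0) \subset B_2$, that connects $(x^*, t^*) \in C_{1,1}$ to every point of $C_{1,1}(0, -2)$ in a uniformly bounded number of steps; this works because the two boxes are compactly contained in $C_{2,4}$ with universal separation from its parabolic boundary.
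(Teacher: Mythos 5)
Your reduction to the two-sided extremal inequalities and the normalization are fine, but the chaining step --- which you correctly single out as the crux --- does not work as written, and the obstruction is quantitative, not merely technical. Each application of the rescaled theorem at scale $R$ from an observation point $(z,\tau)$ controls $v$ only on the bounded part of $P_R(z,\tau-R^\s)$, i.e.\ on points lying at times in $[\tau-2R^\s,\tau-R^\s]$ and at spatial distance at most $(1-c)R$ from $z$ (the loss $1-c<1$ comes from keeping $\varphi_\e$ bounded away from the lateral boundary of the paraboloid). After $N$ links of scale $R$ the chain has therefore receded in time by at least $NR^\s$ while moving spatially by at most $(1-c)NR$; if the total time gap is $T$, the spatial reach is at most $(1-c)T^{1/\s}N^{1-1/\s}$. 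The corollary requires reaching points of $C_{1,1}(0,-2)=B_1\times(-3,-2]$ whose spatial distance from $(x^*,t^*)\in B_1\times(-1,0]$ can be arbitrarily close to $2$ while the time gap can be arbitrarily close to $1$. For $\s=1$ the reach is at most $(1-c)T$ with $T$ as small as $1$, which is strictly less than $2$: no chain joins $x^*$ near $e_1$ at time near $-1$ to a point near $-e_1$ at time $-2$. For $\s>1$ a chain exists but needs $N$ at least of order $(2/(1-c))^{\s/(\s-1)}$ links, so your constant $C_\r^N$ blows up as $\s\searrow1$ --- precisely the critical regime the paper is built to handle uniformly. Compact containment of the two boxes in $C_{2,4}$ is not the relevant condition; what matters is compatibility with the space-time slope of the paraboloids, and it fails here.

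The missing ingredient is the Point Estimate, Theorem \ref{thm:PE}, whose geometry is designed for exactly this difficulty: an infimum bound over $B_1\times(0,1]$ yields a distribution bound over the \emph{full} ball $B_1\times(-1,0]$, i.e.\ smallness propagates backward across a unit time gap at full spatial width, uniformly in $\s\in[1,2)$ (this is what the barriers of Lemma \ref{lem:barrier_pt_est} and Lemma \ref{lemma:Barrier2} accomplish). Translating Theorem \ref{thm:PE} down in time by two or three unit steps, and using at each step that the previous measure estimate forces the infimum over the next slab to be controlled, one obtains
\begin{align*}
\left|\{v>s\}\cap \left(B_1\times(-3,-1]\right)\right| \leq Cs^{-\e}.
\end{align*}
From this measure estimate one finds, above each target point $(y_0,s_0)\in C_{1,1}(0,-2)$ and at every small scale $R$, an observation point $(z,\tau)$ with $\tau-s_0\in[R^\s,2R^\s]$, $|z-y_0|$ small and $v(z,\tau)\leq C$; a single application of your rescaled theorem (or of the Oscillation Lemma \ref{lem:oscillation}) then bounds $v(y_0,s_0)$. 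In short: propagate in measure via the Point Estimate, and reserve the pointwise Harnack theorem for the final, purely local step.
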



\begin{proof}

From Corollary \ref{cor:weak_point_estimate} we use the super solution inequality to get,
\begin{align}\label{eq:harnack}
(2-\s)\int_{-1}^0 \|u(s)\|_{L^1(\w_\s)}ds \leq C.
\end{align}
This bound will be important when estimating the (non-local) error that comes out when we truncate the solution. 

Let $M := \inf\{ M' \in \R^+: u \leq M'\varphi_\e \text{ in } P_1\}$. It suffices to show that $M$ is universally bounded.

Let $(x,t) \in P_1$ such that $u_0 := u(x,t) = M\varphi_\e(x,t) = Md^{-(n+\s)/\e}$ where $d :=  ((1 + t)^{1/\s} - |x|) > 0$. Thanks to Theorem \ref{thm:PE}, we have that, for any $\theta\in(0,1)$,
\begin{align*}
 \left|\3 u \geq \frac{u_0}{2} \4 \cap P_{\theta d/8}(x,t)\right| &\leq Cu_0^{-\e} \leq C\theta^{-(n+\s)}M^{-\e}\left|P_{\theta d/8}(x,t)\right|.
\end{align*}
As we want to get a contradiction if $M$ is arbitrarily large, we will show that
\begin{align}
 \left|\3 u < \frac{u_0}{2} \4\cap P_{\theta d/8}(x,t)\right| \leq \frac{1}{2}\left|P_{\theta d/8}\right|,
\end{align}
for some $\theta>0$ to be fixed, independently of $M$.

As in the proof of the Oscillation Lemma, let $r := d/2$ and note that over $P_{\theta r}(x,t)$, $u$ is bounded from above by $u_0(1-\theta/2)^{-(n+\s)/\e}$. 

Consider $w = (u_0(1-\theta/2)^{-(n+\s)/\e} - u)^+$, which is equal to $(u_0(1-\theta/2)^{-(n+\s)/\e} - u)$ in $P_{\theta r}(x,t)$. In the smaller domain $P_{\theta r/2}(x,t)$, $w$ satisfies an equation with a right hand side that includes a contribution coming from the truncation,
\begin{align*}
 w_t - \cM^-_{\cL_0} w &\geq -f - \cM^+_{\cL_0}(w+u) \text{ in $P_{\theta r/2}(x,t)$}
\end{align*}
We want to show that for $(y,s) \in P_{\theta r/2}(x,t)$, we have
\begin{align}\label{eq:rhs_harnack}
\int_{t-(\theta r/2)^\s}^t F(s) ds \leq C(\theta r)^{-(n+\s)},
\end{align}
where
\begin{align*}
F(s) := \sup_{(y,s) \in P_{\theta r/2}(x,t)}\1 f(s) + \cM^+_{\cL_0}(w+u)(y,s)\2.
\end{align*}
Because of the bound \eqref{eq:harnack} we get that for $(y,s) \in P_{\theta r/2}(x,t)$
\begin{align*}
\int_{t-(\theta r/2)^\s}^t \cM^+_{\cL_0}(w+u)(y,s) ds &\leq C(\theta r)^{-(n+\s)},
\end{align*}
Which implies the bound \eqref{eq:rhs_harnack}, assuming that $\theta r$ is sufficiently small to dominate over $\|f^+\|_{L^1} \leq 1$.

Now we apply the Point Estimate Theorem \ref{thm:PE} to $w$ with respect to the domains $P_{\theta r/2}(x,t)$ and $P_{\theta r/4}(x,t)$.
\begin{align*}
&\frac{\left|\3u<u_0/2\4 \cap P_{\theta r/4}(x,t)\right|}{|P_{\theta r/4}(x,t)|}\\
= \ &\frac{\left|\3w > u_0((1-\theta/2)^{-(n+\s)/\e}-1/2)\4 \cap P_{\theta r/4}(x,t)\right|}{|P_{\theta r/4}(x,t)|},\\
\leq \ &C(w(x,t) + (\theta r)^{-(n+\s)}(\theta r)^\s)^\e(u_0((1-\theta/2)^{-(n+\s)/\e} - 1/2))^{-\e}.
\end{align*}
By taking $\theta$ sufficiently small such that $(1-\theta/2)^{-(n+\s)} \geq 3/4$,
\begin{align*}
\frac{\left|\3u<u_0/2\4 \cap P_{\theta r/4}(x,t)\right|}{|P_{\theta r/4}(x,t)|}  &\leq C(w(x,t) + (\theta r)^{-n})^\e u_0^{-\e},\\
\leq & \ C[((1-\theta/2)^{-(n+\s)/\e}-1)^\e+(M^{-1}\theta^{-(n+\s)})^\e].
\end{align*}
At this point we just have to fix $\theta$ even smaller such that $C((1-\theta/2)^{-(n+\s)/\e}-1)^\e \leq 1/4$ which implies that for $M$ sufficiently large the right hand side above becomes smaller than $1/2$, independently of $\theta$.
\end{proof}

\section{Regularity results}\label{section:regularity}

H\"older estimates are obtained from the Harnack inequality. We note that the iteration at smaller scales should consider that the bound for the tail grows with an algebraic rate in each step. This is a technical difficulty that have been treated in previous papers, \cite{Caffarelli09}, \cite{Caffarelli11} or \cite{Davila12-p}. The bound in terms of the integral norm of $u$ is a consequence of the Oscillation Lemma.

\begin{corollary}\label{cor:holder}
Let $u$ satisfies
\begin{alignat*}{2}
 u_t - \cM^+_{\cL_0}u &\leq f(t) &&\text{ in $C_{1,1}$},\\
 u_t - \cM^-_{\cL_0}u &\geq -f(t) &&\text{ in $C_{1,1}$},
\end{alignat*}
Then there is some $\a \in (0,1)$ and $C>0$, depending only on $n$, $\l$, $\L$ and $\b$, such that for every $(y,s), (x,t) \in C_{1/2,1/2}$
\begin{align*}
\frac{|u(x,t) - u(y,s)|}{(|x-y| + |t-s|^{1/\s})^\a} \leq C\1\|u\|_{L^1((-1,0]\mapsto L^1(\w_\s))} + \|f\|_{L^1(-1,0)}\2.
\end{align*}
\end{corollary}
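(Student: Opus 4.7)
The plan is, by linearity, to normalize so that the right-hand side of the claimed inequality equals $1$, then to apply the Oscillation Lemma (Corollary \ref{cor:oscillation1}) to both $+u$ and $-u$, yielding a universal $L^\8$ bound on $u$ in, say, $C_{3/4,3/4}$. From here the Hölder estimate reduces to a standard diminish-of-oscillation argument: for every $(x_0,t_0)\in C_{1/2,1/2}$ I would prove inductively that
\[
M_k-m_k \leq C_1\m^k, \qquad Q_k:=C_{r^k,r^{k\s}}(x_0,t_0),
\]
with $M_k=\sup_{Q_k}u$, $m_k=\inf_{Q_k}u$, for some universal $r\in(0,1/4)$, $\m\in(0,1)$, $C_1>0$. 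The Hölder exponent is then $\a=\log(1/\m)/\log(1/r)$; since $(x_0,t_0)$ is arbitrary this yields the full estimate.

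To run the inductive step I would rescale
\[
w(y,s):=\frac{2}{M_k-m_k}\Bigl(u(x_0+r^ky,\,t_0+r^{k\s}s)-\tfrac{M_k+m_k}{2}\Bigr),
\]
so that $|w|\leq 1$ on $C_{1,1}$. Scale invariance of $\cL_0^\s(\l,\L,\b)$ (Definition \ref{def:linear_family}) guarantees that $w$ satisfies the same pair of extremal inequalities on $C_{1,1}$, now with a rescaled right-hand side of $L^1_t$ norm at most $2r^{k\s}/(M_k-m_k)$. One of the sets $\{w\geq 0\}\cap C_{1,1}$ or $\{w\leq 0\}\cap C_{1,1}$ has measure at least $|C_{1,1}|/2$; WLOG the former. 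Then the Point Estimate (Theorem \ref{thm:PE}), applied in its contrapositive form to the nonnegative function $1+w$, gives $1+w\geq \d$ on a subcylinder for a universal $\d>0$, hence $M_{k+1}-m_{k+1}\leq (1-\d/2)(M_k-m_k)$, closing the induction as soon as $\m\geq 1-\d/2$.

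The main obstacle is that $w$ is not nonnegative outside $C_{1,1}$, so applying the Point Estimate to $1+w$ requires truncating by $\chi_{C_{1,1}}$ (or extending by the constant $1$) and absorbing the resulting nonlocal error into the right-hand side. That error is controlled by
\[
\sup_{(y,s)\in C_{1/2,1/2}}\int_{|z|\geq 1/2}|w(y+z,s)|\,\w_\s(z)\,dz,
\]
integrated in time. Splitting the outer region into dyadic annuli $\{r^{-j}\leq |z|<r^{-(j+1)}\}$ and invoking the inductive oscillation bound on $Q_{k-j}$ gives, after rescaling, $|w|\leq C\m^{-j}$ on the $j$-th annulus, so the total tail is dominated by $C\sum_{j\geq 0}(r^\s/\m)^j$. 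This geometric series converges to a universal constant whenever $\m>r^\s$, equivalently whenever the target exponent satisfies $\a<\s$. The rescaled $f$ contribution is handled identically. The constants would be fixed in the order $\d$ (from the Point Estimate), then $\m=1-\d/2$, then $r$ small enough that $r^\s<\m$, and finally $C_1$ large enough to absorb the universal tail constant into the inductive hypothesis. This dyadic tail bookkeeping is the only delicate point; once in place, the diminish-of-oscillation closes and the Hölder estimate on $C_{1/2,1/2}$ is immediate.
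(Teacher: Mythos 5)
Your proposal is correct and takes essentially the route the paper itself indicates: the paper does not write out this proof but points to the standard diminish-of-oscillation iteration built on the Point Estimate/Harnack inequality, with the Oscillation Lemma supplying the $L^\infty$-to-$L^1$ reduction and with exactly the dyadic-annulus tail bookkeeping (oscillation growing like $\mu^{-j}$ at scale $r^{-j}$, summable since $r^\sigma<\mu$) that you describe. The only formulation detail to tidy up is to normalize the rescaled function by the inductive bound $C_1\mu^k$ (or, equivalently, to maintain $M_k,m_k$ as constructed envelopes with $M_k-m_k=2C_1\mu^k$ rather than the actual sup and inf), so that the annulus bound $|w|\leq C\mu^{-j}$ genuinely follows; as written, the inductive step need only be run when $M_k-m_k>\mu\, C_1\mu^k$, which makes the two normalizations comparable.
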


Further regularity in space can be obtained by imposing a smoothness condition on the kernels in order to average rough oscillations of the boundary data by an using an integration by parts technique as in \cite{Caffarelli09}. In this sense we let $\cL_1^\s(\l,\L,\b) := \{L_{K,b}^\s\} \ss \cL_0^\s(\l,\L,\b)$ such that,
\begin{align*}
|DK(y)| \leq \L|y|^{-1}.
\end{align*}

\begin{corollary}\label{cor:holder2}
Let $\cL \ss \cL_1$, $I:\R^\cL\to\R$ be uniformly elliptic, translation invariant and such that $I0=0$. Let $u$ satisfies
\begin{alignat*}{2}
u_t - Iu = f(t) \text{ in $C_{1,1}$}.
\end{alignat*}
Then there is some $\a \in (0,1)$ and $C>0$, depending only on $n$, $\l$, $\L$ and $\b$, such that for every $(y,s), (x,t) \in C_{1/2,1/2}$,
\begin{align*}
|Du(x,t)|+\frac{|Du(x,t)-Du(y,s)|}{(|x-y| + |t-s|^{1/\s})^\a} \leq C\1\|u\|_{L^1((-1,0]\mapsto L^1(\w_\s))} + \|f\|_{L^1(-1,0)}\2.
\end{align*}
\end{corollary}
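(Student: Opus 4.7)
The strategy is to bootstrap the Hölder estimate of Corollary \ref{cor:holder} into a $C^{1,\alpha}$ estimate, exploiting the translation invariance of $I$ and the kernel smoothness. After normalization we may assume the right-hand side of the desired inequality equals $1$, so that Corollary \ref{cor:holder} directly provides $u \in C^{\alpha_0}$ in space-time with a universal exponent $\alpha_0 \in (0,1)$. The first stage is to iterate this estimate on incremental quotients. Since $I$ is translation invariant, for any $(h,\tau)\in\R^n\times\R$ the shift $u(\cdot+h,\cdot+\tau)$ solves the same equation, so by the uniform-ellipticity identity (Theorem \ref{thm:equation_for_the_diference}) the difference $w_{h,\tau} := u(\cdot+h,\cdot+\tau)-u$ satisfies
\[
(w_{h,\tau})_t - \cM^+_{\cL} w_{h,\tau} \leq 0, \quad (w_{h,\tau})_t - \cM^-_{\cL} w_{h,\tau} \geq 0
\]
on a slightly smaller cylinder. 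The rescaled quotient $w_{h,\tau}/(|h|+|\tau|^{1/\sigma})^{\alpha_0}$ satisfies the same extremal inequalities with $L^1(\omega_\sigma)$-norm controlled by the $C^{\alpha_0}$-seminorm of $u$, so Corollary \ref{cor:holder} promotes $u$ to $C^{2\alpha_0}$. Iterating this self-improvement yields $u \in C^{\gamma}$ for every $\gamma < 1$.

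The second stage, which closes the gap up to $C^{1,\alpha}$, is where the smoothness hypothesis $|DK(y)| \leq \Lambda|y|^{-1}$ enters. Formally, for any $L_{K,b}\in\cL_1$ and smooth $\varphi$, an integration by parts moves a spatial derivative across the kernel so that $L_{K,b}(\partial_e \varphi) = \partial_e(L_{K,b}\varphi)$; combined with the uniform ellipticity of $I$ and the fact that $f = f(t)$ is $x$-independent, the spatial derivative $v := \partial_e u$ should satisfy
\[
v_t - \cM^+_{\cL} v \leq 0, \quad v_t - \cM^-_{\cL} v \geq 0.
\]
This is made rigorous via the first-difference quotients of the previous stage: the smoothness of $K$ furnishes a uniform $L^1(\omega_\sigma)$ bound on $(u(\cdot+h,\cdot)-u)/|h|$ as $h\to 0$, so the family of quotients is compact and their limits obey the extremal inequalities above. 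A final application of Corollary \ref{cor:holder} to $v$ yields the $C^\alpha$ estimate on $\partial_e u$ in space-time and, in view of the equation (which then determines $u_t$ pointwise from $Du$ and the tails), the full Hölder bound of the statement.

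The main obstacle is precisely the uniform control of $\|(u(\cdot+h)-u)/|h|\|_{L^1(\omega_\sigma)}$ as $h\to 0$. Without the smoothness assumption on $K$ this norm may only be bounded by $|h|^{\gamma-1}$ for the best available $C^\gamma$ exponent $\gamma < 1$, and the bootstrap saturates strictly below the $C^1$ level. The condition $|DK(y)| \leq \Lambda|y|^{-1}$ is exactly what allows one to trade a derivative on $u$ for a derivative on $K$ without any loss of integrability at infinity, making the limit $h \to 0$ legitimate and delivering the final jump from $C^{1-\epsilon}$ to $C^{1,\alpha}$.
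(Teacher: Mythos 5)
Your overall architecture --- bootstrapping Corollary \ref{cor:holder} on incremental quotients via translation invariance and Theorem \ref{thm:equation_for_the_diference}, with the kernel smoothness entering through an integration by parts on the tails --- is exactly the route the paper intends (it defers to the technique of \cite{Caffarelli09}). There is, however, a concrete gap in your first stage. The claim that $w_{h,\tau}/(|h|+|\tau|^{1/\s})^{\a_0}$ has $L^1(\w_\s)$-norm controlled by the $C^{\a_0}$-seminorm of $u$ is false: that seminorm is only an \emph{interior} estimate, while outside the cylinder $u$ is merely $L^1(\w_\s)$, so the best available bound is $\|w_h\|_{L^1(\w_\s)}\leq 2\|u\|_{L^1(\w_\s)}+o(1)$, and the rescaled norm blows up like $|h|^{-\a_0}$ as $h\to0$. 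The correct step is to truncate --- apply Corollary \ref{cor:holder} to $(w_h\chi_{B_\r})/|h|^{\gamma}$ --- and to control the truncation error $\cM^{\pm}_{\cL_0}\left(w_h\chi_{B_\r^c}\right)/|h|^{\gamma}$ by the change of variables that transfers the increment onto the kernel; the hypothesis $|DK(y)|\leq\L|y|^{-1}$ then gives a bound $C|h|^{1-\gamma}\|u\|_{L^1(\w_\s)}\leq C\|u\|_{L^1(\w_\s)}$ for every $\gamma\leq1$. In other words, the integration by parts is needed at \emph{every} step of the bootstrap, not only at the final first-order step; as written, your stage 1 fails. Relatedly, saying that the smoothness of $K$ ``furnishes a uniform $L^1(\w_\s)$ bound on $(u(\cdot+h)-u)/|h|$'' misstates the mechanism: no kernel hypothesis can bound the weighted norm of a difference quotient of an arbitrary $L^1(\w_\s)$ tail; what is bounded is the nonlocal contribution of that tail to the equation satisfied by the truncated quotient. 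Your closing paragraph describes the right mechanism, so the fix is largely a matter of relocating it.

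Two smaller points. Including time increments $\tau\neq0$ produces the right-hand side $f(\cdot+\tau)-f(\cdot)$, whose $L^1$ norm is not controlled after division by $|\tau|^{\a_0/\s}$; time shifts are also unnecessary, since once the purely spatial quotients $w_h/|h|$ are uniformly controlled, Corollary \ref{cor:holder} applied to them (equivalently to $\partial_e u$, which satisfies the two extremal inequalities with zero right-hand side because $f$ is independent of $x$) already delivers H\"older continuity of $Du$ in both space and time. Finally, the iteration lemma of Caffarelli--Cabr\'e type lands on Lipschitz after finitely many steps (when the accumulated exponent exceeds one), and the passage to $C^{1,\a}$ is one further application of the same estimate at exponent one; phrasing the intermediate conclusion as ``$C^\gamma$ for every $\gamma<1$'' obscures where the argument actually crosses the threshold.
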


The technique developed in \cite{Serra14} can be adapted to prove a $C^{1,\a}$ estimate in space for non-symmetric, rough kernels. It does not necessarily contain the previous result, because for $\s<1$ the H\"older exponent in the following Corollary is smaller than one.

\begin{corollary}
Let $\cL \ss \cL_0$, $I:\R^\cL\to\R$ be uniformly elliptic, translation invariant and such that $I0=0$. Let $u$ satisfies
\begin{alignat*}{2}
u_t - Iu = f(t) \text{ in $C_{1,1}$}.
\end{alignat*}
Then there is some $\a \in (0,1)$ and $C>0$, depending only on $n$, $\l$, $\L$ and $\b$, such that for every $(y,s), (x,t) \in C_{1/2,1/2}$,
\begin{align*}
|Du(x,t)|+\frac{|Du(x,t)-Du(y,s)|}{(|x-y| + |t-s|^{1/\s})^\a} \leq C\1\|u\|_{L^1((-1,0]\mapsto L^1(\w_\s))} + \|f\|_{L^1(-1,0)}\2.
\end{align*}
\end{corollary}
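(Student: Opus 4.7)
The plan is to adapt the technique of J. Serra \cite{Serra14} from the elliptic setting, trading the smoothness hypothesis on the kernels for a combination of translation invariance, a blow-up/compactness argument, and a Liouville-type theorem for global viscosity solutions of the extremal equations with sub-$(1+\a)$ growth. The key structural ingredient is that, because $I$ is translation invariant with $I0=0$ and uniformly elliptic, first incremental quotients $w_h(x,t):=u(x+h,t)-u(x,t)$ solve the extremal inequalities for $\cM^{\pm}_{\cL_0}$ by Theorem \ref{thm:equation_for_the_diference}; this is what will allow to bypass the smoothness of the kernel used in Corollary \ref{cor:holder2}.

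\textbf{Blow-up and compactness.} Suppose the estimate fails. Then one can find a sequence $u_k$ of solutions with uniformly controlled right-hand side and tails, and scales $r_k\searrow 0$, $\theta_k\nearrow\8$, for which the Campanato-type seminorm
\[
\phi_k(r):=r^{-(1+\a)}\inf_{p\in\R^n}\sup_{C_{r,r^\s}}\bigl|u_k(x,t)-u_k(0,0)-p\cdot x\bigr|
\]
attains $\phi_k(r_k)=\theta_k$ and is bounded by $\theta_k$ for all $r\geq r_k$. Let $p_k$ be an optimizer at scale $r_k$ and set
\[
v_k(x,t):=\frac{u_k(r_k x,r_k^\s t)-u_k(0,0)-r_k p_k\cdot x}{\theta_k\,r_k^{1+\a}}.
\]
Scale invariance of $\cL_0$ and uniform ellipticity of $I$ give that $v_k$ satisfies the extremal inequalities for $\cM^{\pm}_{\cL_0}$ with a right-hand side that vanishes as $k\to\8$, while minimality of $r_k$ and optimality of $p_k$ yield $\sup_{C_{R,R^\s}}|v_k|\leq CR^{1+\a}$ for $1\leq R\leq r_k^{-1}$, the non-degeneracy $\sup_{C_{1,1}}|v_k|\geq c_0>0$, and vanishing best-affine approximation at the origin. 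Applying Corollary \ref{cor:holder} on compact subsets and Theorem \ref{thm:stability}, a subsequence converges locally uniformly to a limit $v$ defined on $\R^n\times(-\8,0]$, inheriting the extremal inequalities, the growth $|v(x,t)|\leq C(1+|x|+|t|^{1/\s})^{1+\a}$, and the non-degeneracy together with the vanishing affine approximation.

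\textbf{Liouville.} The remaining (and main) task is to show that any such $v$ must be affine in $x$ and constant in $t$; the contradiction with $\sup_{C_{1,1}}|v|\geq c_0$ then closes the argument. We carry this out via iterated spatial incremental quotients $w_h=v(\cdot+h,\cdot)-v(\cdot,\cdot)$, which again satisfy the extremal inequalities by translation invariance. After taking sufficiently many differences so that the resulting function has growth strictly below $\s$ (and therefore lies in $L^1(\w_\s)$ at every time slice), a rescaled application of Corollary \ref{cor:holder} on $C_{R,R^\s}$ and sending $R\to\8$ forces the Hölder seminorm on $C_{1,1}$ to zero. Hence a high enough iterated spatial difference of $v$ is constant; combined with the growth bound $(1+\a)$ and $\a<1$, this forces $v$ to be affine in $x$, and the analogous argument applied to time-differences pins down the $t$-dependence.

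\textbf{Main obstacle.} The delicate point is the Liouville step: functions of growth larger than $\s$ do not lie in $L^1(\w_\s)$, so the extremal operators can only be applied directly once enough spatial differences have been taken to lower the growth below $\s$. The critical non-local drift must also be tracked through both the rescaling and the differencing steps, which is possible precisely because $\cL_0$ is scale invariant under the transformation in Definition \ref{def:linear_family}. These constraints restrict the admissible $\a$ to a small positive exponent depending on $n,\l,\L,\b$ (and implicitly on $\s-1$), matching the limitation flagged in the statement's preamble.
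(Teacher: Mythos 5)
Your proposal is correct and follows essentially the same route as the paper: a contradiction/blow-up compactness argument in the style of \cite{Serra14}, with the limiting global solution shown to be affine via a Liouville theorem obtained by applying the interior H\"older estimate (Corollary \ref{cor:holder}) to incremental quotients at larger and larger scales. The only cosmetic difference is your use of best-approximating planes at a single center in the Campanato quantity, where the paper's $\Theta(r)$ uses least-squares fitting planes and a supremum over scales and centers.
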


The proof follows the same ideas from \cite{Serra14} by compactness towards a global solution for which a Lioville type of Theorem has been already proven. We present some of the ideas of the original proof. Compared to the main Theorem in \cite{Serra14}, the right-hand side bounds are in terms of integral norms as a consequence of Corollary \ref{cor:holder2} and \ref{cor:oscillation1}.

\begin{proof}[Sketch of the proof]
It proceeds by contradiction by considering a sequence of translation invariant operators $\{I_k\} \ss \cL_0$ and normalized functions $\{u_k\}$ and $\{f_k\}$ such that,
\begin{align*}
&(u_k)_t - I_k u_k = f_k \text{ in $C_{1,1}$},\\
&\|u_k\|_{L^1((-1,0]\mapsto L^1(\w_\s))} + \|f_k\|_{L^1(-1,0)} \leq 1.
\end{align*}
By Corollary \ref{cor:oscillation1}, we get that $\|u_k\|_{L^\8(C_{3/4,3/4})} \leq C$. To state the contradiction hypothesis we consider the least squares fitting planes,
\begin{align*}
l_{k,r,x,t}(y) &:= a_{k,r,x,t}\cdot y + b_{k,r,x,t},\\
a_{k,r,x,t} &:= \frac{n+2}{r^2}\fint_{C_{r,r^\s}(x,t)}u(y,s)(y-x)dyds,\\
b_{k,r,x,t} &:= \fint_{C_{r,r^\s}(x,t)}u(y,s)dyds.
\end{align*}
By the proof of Lemma 4.3 in \cite{Serra14} (notice we just assume $\|u_k\|_{L^\8(C_{3/4,3/4})} \leq C$), the result follows if we get a contradiction by assuming,
\begin{align*}
\Theta(r) := \sup_{\substack{k\in\N\\\r\in(r,1/4)\\(x,t) \in C_{1/2,1/2}}} \frac{\|u_k-l_{k,\r,x,t}\|_{L^\8(C_{\r,\r^\s}(x,t))}}{\r^{1+\a}}  \underset{r\to 0}{\nearrow} \8.
\end{align*}
Given $r_i = 1/i$, for $i=5,6,\ldots$, there exist sequences $k_i \in \N$, $\rho_i \in (r_i,1/4)$, $(x_i,t_i) \in C_{1/2,1/2}$ such that,
\begin{align*}
\frac{\|u_{k_i-}l_{k_i,\r_i,x_i,t_i}\|_{L^\8(C_{\r_i,\r_i^\s}(x_i,t_i))}}{\r_i^{1+\a}} \geq \frac{\Theta(r_i)}{2} \geq \frac{\Theta(\r_i)}{2}.
\end{align*}
The intermediate inequality implies that $\r_i \to 0$ as $i\to\8$. Then we consider the rescalings,
\begin{align*}
v_i(x,t) := \1\frac{u_i- l_{k_i,\r_i,x_i,t_i}}{\r_i^{1+\a}\Theta(\r_i)}\2(x_i+\r_i x,t_i + \r^\s_i t).
\end{align*}
In particular they satisfy,
\begin{enumerate}
\item $\|v_i\|_{L^\8(\R^n\times(-1/2,0])} \geq 1/2$,
\item $\|v_i\|_{B_R\times(-1/2,0]} \leq CR^{1+\a}$ for $R\in(1,1/(2\r_i))$,
\item
\begin{align*}
\iint_{C_{1,1}} v_i(x,t)l(x)dxdt = 0, \text{ for all $l$ affine}.
\end{align*}
\item
\begin{align*}
(v_i)_t - \tilde I_i v_i = \tilde f_i \text{ in $C_{1/(2\r_i),1/2}$}
\end{align*}
where $\tilde I_i$ is uniformly elliptic with the same constants and $\|\tilde f_i\|_{L^1(-1,0)} \to 0$ as $i\to\8$.
\end{enumerate}

By the compactness resulting from the regularity of Corollary \ref{cor:holder} and the weak compactness of the operators, we recover the existence of accumulation points $v_\8$ and $I_\8$ such that,
\begin{enumerate}
\item $\|v_\8\|_{L^\8(\R^n\times(-1/2,0])} \geq 1/2$,
\item $\|v_\8\|_{B_R\times(-1/2,0]} \leq CR^{1+\a}$ for $R>1$,
\item
\begin{align*}
\iint_{C_{1,1}} v_\8(x,t)l(x)dxdt = 0, \text{ for all $l$ affine}.
\end{align*}
\item
\begin{align*}
(v_\8)_t - I_\8 v_\8 = 0 \text{ in $\R^n\times(-1/2,0]$}.
\end{align*}
where $I_\8$ is translation invariant and uniformly elliptic with the same constants.
\end{enumerate}
It suffices at this point to prove that such global solutions can only be an affine function depending only on the $x$ variable in order to get the contradiction from (2) and (3). Given the hypothesis we have at this points it follows exactly as in the proof of the Liouville Theorem 3.1 in \cite{Serra14} applying instead Corollary \ref{cor:holder} to the difference quotients.
\end{proof}

$C^{1,\a}$ regularity in time is not expected even if $I$ is translation invariant in time and $f\equiv 0$. A counterexample for the fractional heat equation was provided in \cite{Davila12-p}. The problem arises from the fact that sudden changes in time of the boundary data get immediately sensed by the equation given its non-local nature. Different to the previous estimates for the spatial derivatives, smooth kernels does not seem to control rough oscillations of the boundary data in this case. However, further regularity in time can be retrieved if the solution is controlled by,
\begin{align*}
[u]_{C^{0,1}((t_1,t_2]\mapsto L^1(\w_\s))} &:= \sup_{(t-\t,t] \ss (t_1,t_2]} \frac{\|u(t)-u(t-\t)\|_{L^1(\w_\s)}}{\t}.
\end{align*}

\begin{corollary}[Further regularity in time]\label{furthertime}
Let $\cL \ss \cL_0$, $I:\R^\cL\to\R$ be uniformly elliptic, translation invariant such that $I0=0$. Let $u$ satisfies
\begin{align*}
u_t-Iu = 0 \text{ in $C_{1,1}$}.
\end{align*}
Then there is some $\a \in (0,1)$ and $C>0$, depending only on $n$, $\l$, $\L$ and $\b$, such that for every $(x,t), (y,s)\in C_{1/2,1/2}$ we have 
\begin{align*}
|u_t(x,t)| + \frac{|u_t(x,t)-u_t(y,s)|}{(|x-y|+|t-s|^{1/\s})^\a}\leq C[u]_{C^{0,1}((-1,0]\mapsto L^1(\w_\s))}.
\end{align*}
\end{corollary}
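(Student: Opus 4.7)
The plan is to obtain the estimate for $u_t$ by applying Corollaries \ref{cor:oscillation1} and \ref{cor:holder} to the time difference quotients of $u$. Translation invariance of $I$ in time is the crucial ingredient: it implies that for small $h>0$ the function $u(\cdot,\cdot+h)$ solves the same equation as $u$, hence their normalized difference satisfies the extremal inequalities with zero right-hand side.

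For $h\in(0,1/4)$ I will set
$$
v^h(x,t):=\frac{u(x,t+h)-u(x,t)}{h},
$$
which is well defined on $C_{1,1-h}(0,-h/2)$. Translation invariance gives that $u(\cdot,\cdot+h)$ is also a viscosity solution of $w_t-Iw=0$, and Theorem \ref{thm:equation_for_the_diference} applied in both directions to the pair $u$, $u(\cdot,\cdot+h)$, together with the positive homogeneity of $\cM^\pm_{\cL_0}$, yields
$$
(v^h)_t-\cM^+_{\cL_0}v^h\le 0,\qquad (v^h)_t-\cM^-_{\cL_0}v^h\ge 0
$$
in the viscosity sense on that cylinder. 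For the tail I use that, by the very definition of the Lipschitz seminorm, $\|v^h(t)\|_{L^1(\w_\s)}\le [u]_{C^{0,1}((-1,0]\mapsto L^1(\w_\s))}$ for every admissible $t$, which after integrating gives $\|v^h\|_{L^1((-1,0]\mapsto L^1(\w_\s))}\le C\,[u]_{C^{0,1}((-1,0]\mapsto L^1(\w_\s))}$, uniformly in $h$.

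With these two facts in place, Corollaries \ref{cor:oscillation1} and \ref{cor:holder} applied to $v^h$ on a cylinder slightly interior to its domain (and covering $C_{1/2,1/2}$) produce the uniform-in-$h$ estimate
$$
\|v^h\|_{L^\infty(C_{1/2,1/2})}+[v^h]_{C^\a_{\mathrm{par}}(C_{1/2,1/2})}\le C\,[u]_{C^{0,1}((-1,0]\mapsto L^1(\w_\s))}.
$$
Arzelà--Ascoli then extracts a subsequence $v^{h_j}$ converging uniformly on $C_{1/2,1/2}$ to a continuous $w$ obeying the same bound. The final step is to identify $w=u_t$: since $t\mapsto u(x,t)$ is Lipschitz into $L^1(\w_\s)$, for each fixed $x$ the quotients $v^h(x,\cdot)$ converge to $u_t(x,\cdot)$ in a Lebesgue--point sense a.e., and the uniform $C^\a$ bound upgrades this to pointwise identification, so that $u_t$ exists classically on $C_{1/2,1/2}$ and coincides with $w$.

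The main obstacle I anticipate is precisely this last identification. The natural convergence of the difference quotients is only in an integrated / almost everywhere sense, so one genuinely needs the uniform parabolic $C^\a$ control, not merely the $L^\infty$ bound, to extract a continuous limit and to identify it unambiguously with the classical time derivative of $u$. Once this is settled, the claimed inequality for $u_t$ is just the inequality for $w$.
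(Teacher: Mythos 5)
Your proposal is correct and follows essentially the same route as the paper's sketch: form the time difference quotients, use translation invariance and Theorem \ref{thm:equation_for_the_diference} to get the two extremal inequalities with zero right-hand side, control the tails by $[u]_{C^{0,1}((-1,0]\mapsto L^1(\w_\s))}$, and invoke Corollaries \ref{cor:oscillation1} and \ref{cor:holder} before passing to the limit. The only deviation is that you apply Corollary \ref{cor:holder} directly to $v^h$ via the $L^1(\w_\s)$ bound on the quotients, bypassing the truncation $w^\t = w_1^\t + w_2^\t$ that the paper performs; this is legitimate as Corollary \ref{cor:holder} is stated, and your identification of the limit with $u_t$ (via uniform convergence of the quotients) is a standard step once the uniform $C^\a$ bound is in hand.
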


The idea of the proof is to apply the Oscillation Lemma \ref{lem:oscillation} and the H\"older estimate \ref{cor:holder} to the difference quotients
\begin{align*}
w^\t(t) = \frac{u(t) - u(t-\t)}{\t} 
\end{align*}
By translation invariance, $w^\t$ satisfies in $C_{1,1}$,
\begin{align*}
w^\t_t - \cM_{\cL_0}^+ w^\t &\geq 0,\\
w^\t_t - \cM_{\cL_0}^- w^\t &\leq 0.
\end{align*}
By the Oscillation Lemma, $w^\t$ gets to be controlled by $[u]_{C^{0,1}((-1,0]\mapsto L^1(\w_\s))}$ in $C_{3/4,3/4}$, which implies the bound for $u_t$. Now we can proceed to truncate $w^\t = w_1^\t+w_2^\t$ where $w_1^\t = w^\t$ in $C_{5/8,5/8}$ and $w_1^\t = 0$ in $C_{3/4,3/4}$. The equations for $w^\t$ imply then equations for $w_1^\t$ with right hand sides controlled by $[u]_{C^{0,1}((-1,0]\mapsto L^1(\w_\s))}$. Corollary \ref{cor:holder} then completes the desired estimate.


\bibliographystyle{plain}
\bibliography{mybibliography}

\end{document}